\numberwithin{equation}{section}
\newtheoremstyle{personal}%
{12pt}
{12pt}
{\slshape}
{}
{\bfseries}
{.}
{.5em}
{}
\theoremstyle{personal}%
\newtheorem{thm}{Theorem}[section]
\newtheorem{cor}[thm]{Corollary}
\newtheorem{lem}[thm]{Lemma}
\newtheorem{prop}[thm]{Proposition}
\theoremstyle{definition}
\newtheorem{rem}[thm]{Remark}
\newtheorem{exm}[thm]{Example}
\definecolor{gray}{gray}{0.4}
\newcommand{\length}{\mathrm{length}}
\newcommand{\N}{\mathds{N}}
\newcommand{\Z}{\mathds{Z}}
\newcommand{\R}{\mathds{R}}
\newcommand{\Q}{\mathds{Q}}
\newcommand{\diff}{\mathrm{d}}
\newcommand{\dist}{d}
\newcommand{\crit}{\mathrm{crit}}
\newcommand{\supp}{\mathrm{supp}}
\newcommand{\hess}{\diff^2}
\newcommand{\Tan}{\mathrm{T}}
\newcommand{\cu}{c_{\mathrm{u}}}
\newcommand{\cw}{c_{\mathrm{w}}}
\newcommand{\Hom}{\mathrm{H}}
\newcommand{\SSS}{\mathcal{S}}
\newcommand{\ggamma}{\bm{\gamma}}
\newcommand{\zzeta}{\bm{\zeta}}
\newcommand{\ttheta}{\bm{\theta}}
\newcommand{\eeta}{\bm{\eta}}
\newcommand{\Meas}{\mathfrak{M}}
\newcommand{\Mather}{\mathcal{M}}
\newcommand{\Graph}{\mathcal{G}}
\newcommand{\inj}{{\mathrm{inj}}}
\newcommand{\Mult}{\mathcal{M}}
\newcommand{\ELMult}{\mathcal{D}}
\newcommand{\W}{W^{1,2}}
\newcommand{\UU}{\mathcal{U}}
\newcommand{\VV}{\mathcal{V}}
\newcommand{\WW}{\mathcal{W}}
\newcommand{\PP}{\mathcal{P}}
\DeclareMathOperator*{\toup}{\longrightarrow}
\newcommand{\tb}{\mathcal{B}}
\begin{document}

\title{Minimal boundaries in Tonelli Lagrangian systems}

\author[L. Asselle]{Luca Asselle}
\address{Luca Asselle\newline\indent
Justus Liebig Universit\"at Gie\ss en, Mathematisches Institut\newline\indent Arndtstra\ss e 2, 35392 Gie\ss en, Germany}
\email{luca.asselle@ruhr-uni-bochum.de}

\author[G. Benedetti]{Gabriele Benedetti}
\address{Gabriele Benedetti\newline\indent 
Universit\"at Heidelberg, Mathematisches Institut\newline\indent Im Neuenheimer Feld 205, 69120 Heidelberg, Germany}
\email{gbenedetti@mathi.uni-heidelberg.de}

\author[M. Mazzucchelli]{Marco Mazzucchelli}
\address{Marco Mazzucchelli\newline\indent 
CNRS, \'Ecole Normale Sup\'erieure de Lyon, UMPA\newline\indent  
46 all\'ee d'Italie, 69364 Lyon Cedex 07, France}
\email{marco.mazzucchelli@ens-lyon.fr}

\date{April 25, 2018}
\subjclass[2000]{37J45, 58E05}
\keywords{Tonelli Lagrangians, Ma\~n\'e critical value, Mather set, periodic orbits}

\begin{abstract}
We prove several new results concerning action minimizing periodic orbits of Tonelli Lagrangian systems on an oriented closed surface $M$. More specifically, we show that for every energy larger than the maximal energy of a constant orbit and smaller than or equal to the Ma\~n\'e critical value of the universal abelian cover, the Lagrangian system admits a minimal boundary, i.e.\ a global minimizer of the Lagrangian action on the space of smooth boundaries of open sets of $M$. We also extend the celebrated graph theorem of Mather in this context: in the tangent bundle $\mathrm{T} M$, the union of the supports of all lifted minimal boundaries with a given energy projects injectively to the base $M$. Finally, we prove the existence of action minimizing simple periodic orbits on energies just above the Ma\~n\'e critical value of the universal abelian cover. This provides in particular a class of non-reversible Finsler metrics on the 2-sphere possessing infinitely many closed geodesics.
\tableofcontents
\end{abstract}

\maketitle

\vspace{-5mm}

\section{Introduction}
In low dimensional convex Hamiltonian dynamics, an important role is played by periodic orbits that locally minimize the action. An instance of this can be seen in the remarkable result of Bangert \cite{Bangert:1980ho} showing that the existence of a locally length minimizing closed geodesic -- a so-called ``waist'' -- on a 2-sphere forces the existence of infinitely many more closed geodesics. A Riemannian or Finsler 2-sphere does not necessarily have waists, and indeed there are Finsler spheres with only finitely many closed geodesics \cite{Katok:1973mw, Ziller:1983rw}. However, Taimanov \cite{Taimanov:1991el, Taimanov:1992sm} showed that magnetic geodesic flows on closed surfaces always have waists on sufficiently small energy levels. This result has been given an independent proof and put in the context of Aubry-Mather theory by Contreras, Macarini, and Paternain \cite{Contreras:2004lv}. Recently, its validity has been extended to the class of Tonelli Lagrangian system, the largest class Aubry-Mather theory deals with, by the first and third author \cite{Asselle:2016qv}. In the current paper, we improve these results and derive new, rather unexpected, applications.


\subsection{The setting}
We recall that a Tonelli Lagrangian $L:\Tan M\to\R$, where $M$ is a closed manifold, is a smooth function whose restriction to the fibers of the tangent bundle $\Tan M$ is superlinear with positive definite Hessian. Its Euler-Lagrange flow $\phi_L^t:\Tan M\to\Tan M$ is a second order flow on $M$: a smooth curve $\gamma:\R\to M$ is a Lagrangian orbit if and only if it satisfies the Euler-Lagrange equation $\tfrac{\diff}{\diff t}\partial_vL(\gamma,\dot\gamma)-\partial_qL(\gamma,\dot\gamma)=0$; the corresponding flow line is given by $\phi_L^t(\gamma(0),\dot\gamma(0))=(\gamma(t),\dot\gamma(t))$. The energy 
\[E:\Tan M\to\R,\qquad E(q,v)=\partial_vL(q,v)v-L(q,v)\] is a first integral of the dynamics, meaning that $E\circ\phi_L^t=E$ for all $t\in\R$. 

In this paper, we focus on periodic orbits, that is, those orbits of the form $\gamma:\R/p\Z\to M$ for some period $p>0$. 
The Lagrangian action with energy $e\in\R$ of an absolutely continuous periodic curve $\gamma:\R/p\Z\to M$ is the quantity
\begin{align*}
 \SSS_e(\gamma) = \int_0^p L(\gamma(t),\dot\gamma(t))\,\diff t + p\,e \in \R\cup\{+\infty\}.
\end{align*}
The functional $\SSS_e$, whose domain is the space of absolutely continuous periodic curves with arbitrary period, is known as the free-period action. A version of the least action principle implies that the periodic orbits $\gamma$ of the Lagrangian system of $L$ with energy $e:=E(\gamma,\dot\gamma)$ are precisely the critical points of $\SSS_e$. The simplest example of critical points are the minimizers: we will call \textbf{Tonelli waist} with energy $e$ a local minimizer of $\SSS_e$. We refer the reader to \cite{Contreras:2006yo, Abbondandolo:2013is} for the background on the variational properties of the free-period action functional.

The qualitative properties of the Lagrangian dynamics depend on the energy level that one considers. A first, significant, energy level is $e_0(L):= \max_{q\in M} E(q,0)$.
This is the lowest level such that, for all $e$ above it, there are orbits with energy $e$ going through any given point of $M$. Another remarkable energy level is the Ma\~n\'e critical value $c(L)$, which is defined as the minimal energy $e\in\R$ such that the free-period action functional $\SSS_e$ is non-negative. Given a covering space $M'$ of $M$, one can consider the Ma\~n\'e critical value of the lift of the Lagrangian $L$ to $\Tan M'$. When $M'$ is equal to the universal abelian cover or to the universal cover, the respective Ma\~n\'e critical values are usually denoted by $c_0(L)$ and $\cu(L)$. The energy values that we have introduced so far are ordered as
\begin{align*}
e_0(L)\leq \cu(L)\leq c_0(L) \leq c(L).
\end{align*}
Throughout this paper, we will always assume that $e_0(L)<c_0(L)$. This is a mild assumption, and indeed the strict inequality $e_0(L)<\cu(L)$ is verified on a $C^0$-open and $C^1$-dense subspace of the space of Tonelli Lagrangians (Proposition~\ref{p:T'}).

On any energy level $e>c_0(L)$, the Lagrangian dynamics is of Finsler type, that is, the Euler-Lagrange flow on $E^{-1}(e)$ is orbitally equivalent to a Finsler geodesic flow on the unit tangent bundle of $M$, see \cite[Cor.~2]{Contreras:1998lr}. On energy levels $e\leq c_0(L)$, the Lagrangian  dynamics is in general different from a Finsler one, and particularly when $e\leq \cu(L)$ its study poses several issues due to the potential lack of good variational properties of $\SSS_e$, see \cite{Contreras:2006yo, Abbondandolo:2013is, Abbondandolo:2015lt, Abbondandolo:2014rb}. 

Building on the above mentioned Taimanov's work, Contreras, Macarini, and Paternain \cite{Contreras:2004lv} showed that, when $M$ is a closed surface and $L$ is magnetic (that is, of the form $L(q,v)=\tfrac12g_q(v,v)+\theta_q(v)$ for some Riemannian metric $g$ and 1-form $\theta$), for any $e\in(e_0(L),\cu(L))$ there exists a (not necessarily simple) Tonelli waist $\gamma$ with energy $e$ and  negative action $\SSS_e(\gamma)<0$.\footnote{The above mentioned result in \cite{Contreras:2004lv} is actually claimed for all  $e\in(e_0(L),c_0(L))$, but the proof contains a mistake, which makes the argument correct only for $e\in(e_0(L),\cu(L))$. The mistake is in the technical lemma \cite[Lemma~3.3]{Contreras:2004lv}: the claim that the multicurve $\tau$ is null-homologous is not always true. This is only an issue for energies $e\geq \cu(L)$. Indeed, when $e<\cu(L)$, the curve $\tau$ can be chosen to be the lift of a contractible curve, and in particular it is null-homologous. Nevertheless, the impact of the mistake on the main result of \cite{Contreras:2004lv} is essentially at level $\cu(L)$. Indeed,  the strict inequality $\cu(L)<c_0(L)$ can only hold for surfaces of genus at least 2, which have non-abelian fundamental group, and for each energy $e>\cu(L)$ the existence of a Tonelli waist becomes elementary: $\SSS_e$ satisfies the Palais-Smale condition, and one can find global minimizers of $\SSS_e$ in every connected component of non-contractible periodic curves. Our Theorem~\ref{t:below_Mane} in particular recovers the full result of \cite{Contreras:2004lv} for all $e\in(e_0(L),c_0(L))$.} 
In the recent paper \cite{Asselle:2016qv}, the first and third authors extended the validity of this result to general Tonelli Lagrangians on closed surfaces. In this paper, we further strengthen the result: on any energy level $e\in(e_0(L),c_0(L))$, we will show the existence of simple Tonelli waists, that is, Tonelli waists that have no self-intersections. Actually, as in \cite{Taimanov:1991el, Taimanov:1992sm, Contreras:2004lv}, we will actually show the existence of particular multicurves of periodic orbits.

Throughout this paper, by a multicurve we mean a collection $\ggamma=(\gamma_1,...,\gamma_m)$ of finitely many absolutely continuous periodic curves $\gamma_i:\R/p_i\Z\to M$. Each $\gamma_i$ is called a component of $\ggamma$. When the $\gamma_i$'s are topologically embedded and have pairwise disjoint image, the multicurve $\ggamma$ is said to be embedded and can be seen as an oriented 1-dimensional topological submanifold of $M$ with connected components $\gamma_1,...,\gamma_m$.   
On a closed oriented surface $M$, an embedded multicurve $\ggamma$ is a topological boundary when it is the piecewise smooth (with finitely many singular points) oriented boundary of an open subset $\Sigma\subset M$. 
We denote by $\tb$ the space of topological boundaries of $M$. 
The free-period action functional admits a natural extension, that we will still denote by $\SSS_e$, on the space of multicurves in $M$: given any such multicurve $\ggamma=(\gamma_1,...,\gamma_m)$, we set
$\SSS_e(\ggamma):=\SSS_e(\gamma_1)+...+\SSS_e(\gamma_m)$. 
We say that a topological boundary $\ggamma=(\gamma_1,...,\gamma_m)$ is a \textbf{minimal boundary} with energy $e$ for the Tonelli Lagrangian $L:\Tan M\to\R$ when $\SSS_e(\ggamma) = \inf_{\tb}\SSS_e$. If $e>e_0(L)$, the components of minimal boundaries with energy $e$ are simple Tonelli waists with energy $e$ (Lemma~\ref{l:minimal_topological_boundaries}).


\subsection{Results on subcritical energies}
The first result of our paper implies, in particular, the existence of simple Tonelli waists in the energy range $(e_0(L),c_0(L)]$.

\begin{thm}\label{t:below_Mane}
Let $M$ be an oriented closed surface, and $L:\Tan M\to\R$ a Tonelli Lagrangian with $e_0(L)<c_0(L)$. For each energy value $e\in(e_0(L),c_0(L)]$, there exists a minimal boundary $\ggamma$ with energy $e$ for $L$ with action  $\SSS_{e}(\ggamma)<0$ if $e<c_0(L)$, or $\SSS_{e}(\ggamma)=0$ if $e=c_0(L)$. 
\end{thm}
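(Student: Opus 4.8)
The plan is to produce a minimal boundary by a direct method: infimize $\SSS_e$ over the space $\tb$ of topological boundaries and show the infimum is attained. The first task is to control the infimum. For $e < c_0(L)$ one needs $\inf_{\tb}\SSS_e < 0$; for $e = c_0(L)$ one needs $\inf_{\tb}\SSS_e = 0$ together with the existence of a minimizer. The lower bound $\inf_{\tb}\SSS_e \geq 0$ when $e = c_0(L)$ should follow from the definition of the Ma\~n\'e critical value of the universal abelian cover: a topological boundary lifts to a multicurve in the universal abelian cover $M'$ (each component, being null-homologous, lifts to a loop), and $\SSS_{c_0(L)}$ is nonnegative on loops in $M'$ by definition of $c_0(L)$; so $\SSS_{c_0(L)}(\ggamma) = \SSS_{c_0(L)}(\tilde\ggamma) \geq 0$. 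For the strict negativity when $e < c_0(L)$, I would exhibit an explicit competitor: since $e > e_0(L)$, small closed curves near a point where $E(q,0)$ is nearly maximal have negative $\SSS_e$-action (a standard local computation — shrinking loops have action dominated by the $p\,(e - E(q,0))$ term, which is negative near the maximum of $E(\cdot,0)$ as soon as $e<\cu(L)\le c_0(L)$; for $e$ between $\cu(L)$ and $c_0(L)$ one instead uses a noncontractible curve witnessing $\SSS_e < 0$ in some non-abelian-homology class, exactly the point the footnote flags as delicate), and such a small loop bounds a small disk, hence is a topological boundary. Taking $e = c_0(L)$, the constant-action-$0$ value is approached but one must still guarantee attainment; a competitor of action close to $0$ and the compactness argument below will yield a minimizer of action exactly $0$.

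The second and main task is compactness: given a minimizing sequence $\ggamma_k \in \tb$ with $\SSS_e(\ggamma_k) \to \inf_{\tb}\SSS_e \leq 0$, extract a limit that is again a topological boundary and on which $\SSS_e$ is $\leq$ the infimum. The difficulty is twofold. First, the periods $p_i^{(k)}$ of the components could degenerate — either collapse to $0$ or blow up to $\infty$. Collapse to $0$ is harmless and in fact expected (components shrink to points and disappear), provided we argue that discarding a shrinking component does not increase the action in the limit and does not destroy the boundary property — removing a contractible small component still leaves a topological boundary. Period blow-up on an energy level $\le c_0(L)$ is the genuinely dangerous phenomenon, since $\SSS_e$ can fail Palais--Smale below $\cu(L)$; here one must use that the minimizing value is $\leq 0$ together with a lower isoperimetric/systolic-type bound, or a truncation argument, to rule out components of arbitrarily large period and small action. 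Second, even with bounded periods, one must prevent components from concentrating or oscillating so as to leave the class of embedded multicurves / topological boundaries in the limit (a curve could develop a self-tangency or two components could merge). Here I would invoke the structure theory already available in the paper: Lemma~\ref{l:minimal_topological_boundaries} says the components of a minimal boundary with $e > e_0(L)$ are simple Tonelli waists, so the strategy is to first establish attainment in a slightly enlarged class (e.g., integral $1$-currents that are boundaries, or weak limits of embedded multicurves) where lower semicontinuity of $\SSS_e$ and closedness are easier, and then to run a regularity/cut-and-paste argument showing the minimizer is in fact an embedded topological boundary. The cut-and-paste step is the classical move: at a self-intersection or mutual intersection one resolves the crossing respecting orientations, which does not increase $\SSS_e$ (it is additive over the resulting pieces) and preserves the property of bounding an open set, so a minimizer can be taken embedded; any component of zero length is deleted.

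Concretely, the order of steps I would follow is: (1) establish $\inf_{\tb}\SSS_e < 0$ for $e \in (e_0(L), c_0(L))$ via an explicit small-loop or noncontractible-loop competitor, and $\inf_{\tb}\SSS_{c_0(L)} = 0$ via the universal-abelian-cover estimate; (2) take a minimizing sequence and obtain uniform bounds on the total period $\sum_i p_i^{(k)}$ of components that carry non-vanishing action, using the sign of the minimizing value and a diameter/length estimate — this is where I expect the real work, and it will presumably rely on the quantitative subcritical estimates for $\SSS_e$ from \cite{Asselle:2016qv} or their refinements stated later in the paper; (3) pass to a weak limit $\ggamma_\infty$, prove lower semicontinuity of $\SSS_e$ under this convergence, and argue $\ggamma_\infty$ is (after deleting null components and resolving crossings) a topological boundary with $\SSS_e(\ggamma_\infty) \leq \inf_{\tb}\SSS_e$, hence equality; (4) conclude via Lemma~\ref{l:minimal_topological_boundaries} that its components are simple Tonelli waists, and read off the sign of the action from (1). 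The main obstacle is unquestionably step (2)–(3): controlling period blow-up and ensuring the limit stays a topological boundary on energy levels where the free-period action lacks the Palais--Smale condition — this is exactly the technical heart that distinguishes the range $e \leq \cu(L)$ from the elementary Finsler-type range $e > c_0(L)$.
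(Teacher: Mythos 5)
Your outline has the right overall shape (control the infimum, then compactness and a cut-and-paste regularization), but two of its load-bearing steps are wrong or missing. First, your lower bound $\inf_{\tb}\SSS_{c_0(L)}\geq 0$ rests on the claim that each component of a topological boundary is null-homologous and hence lifts to a loop in the universal abelian cover; this is false in general (two disjoint non-separating curves bounding a subsurface each carry a non-zero class -- only their sum vanishes). This is precisely why the paper proves Lemma~\ref{l:actioncrit}: given a homological boundary $\ggamma=(\gamma_1,\ldots,\gamma_m)$ of negative action, one joins the components by paths $\zeta_i$, forms the null-homologous loop $\gamma_1^n*\zeta_1*\cdots*\gamma_m^n*\overline\zeta_{m-1}*\cdots*\overline\zeta_1$ and lets $n\to\infty$ to contradict the definition of $c_0(L)$. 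Without this iteration trick your critical-level lower bound is unproved. Second, your explicit competitor for $e<c_0(L)$ has the sign backwards: for $e>e_0(L)$ the term $p\,(e-E(q,0))$ is \emph{positive}, so shrinking loops have positive action. Negativity comes directly from the definition of $c_0(L)$ (a null-homologous loop $\zeta$ with $\SSS_e(\zeta)<0$), and the nontrivial step your plan leaves implicit is turning this possibly self-intersecting loop into a \emph{topological boundary} of negative action; in the paper this is done by replacing $\zeta$ with a broken extremal, resolving crossings into tangencies, chamfering, and then applying the decomposition Lemma~\ref{l:boundaries} to extract a piece of negative action.

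The other genuine gap is attainment at $e=c_0(L)$. A direct minimization over $\tb$ at the critical level cannot work as you describe: the infimum $0$ is approached by circles shrinking to a point, so a minimizing sequence may collapse entirely and the limit is empty -- your own remark that discarding shrinking components is ``harmless'' is exactly what destroys attainment there. The paper instead takes minimal boundaries $\ggamma_e$ for $e\nearrow c_0(L)$ (these exist because the subcritical infimum is strictly negative, and negativity is also what makes the compactness statement Lemma~\ref{l:negative_implies_waist}, imported from \cite{Asselle:2016qv}, applicable), shows that no component of $\ggamma_e$ is contained in a ball of radius $\rho_\inj$ (such a component would have positive action and could be deleted, contradicting minimality), which yields a uniform lower bound on component periods; combined with the a priori upper bound \eqref{e:period_bound} on the total period this bounds the number of components, and one passes to a $C^\infty$-limit in $\Mult_{c_0(L)}$ of action $\leq 0$, which Lemma~\ref{l:actioncrit} then forces to have action exactly $0$, hence to be a minimal boundary. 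For the subcritical range your steps (2)--(4) are in the spirit of the paper (minimize in the restricted class $\Mult_e$ of broken extremals with compactness from \cite{Asselle:2016qv}, then identify $\inf_{\tb}\SSS_e=\inf_{\Mult_e}\SSS_e$ via Lemma~\ref{l:minimal_topological_boundaries}), but as written they remain a plan rather than an argument.
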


Let $\Meas_0(L)$ be the set of Borel probability measures $\mu$ on the tangent bundle $\Tan M$ that are invariant by the Euler-Lagrange flow $\phi_L^t$ and have rotation vector 
$\rho(\mu)=0$. We denote by $\Meas_{\min}(L)$ the set of minimal measures with zero rotation vector, that is, those $\mu\in\Meas_0(L)$ such that $\SSS(\mu)=\inf_{\Meas_0(L)}\SSS$, where $\SSS$ is the action functional on probability measures
\begin{align*}
 \SSS(\mu)=\int_{\Tan M} L\,\diff\mu.
\end{align*}
One of the earliest results of Aubry-Mather theory asserts that $\inf_{\Meas_0(L)}\SSS = -c_0(L)$, and that any minimal measure with zero rotation vector has support in the energy level $E^{-1}(c_0(L))$. Moreover, Mather's graph theorem asserts that the base projection $\pi:\Tan M\to M$ restricts to an injective map on the Mather set
\begin{align*}
\Mather_0(L):=\bigcup_{\mu\in\Meas_{\min}(L)}\!\!\! \mathrm{supp}(\mu),
\end{align*}
see \cite{Mather:1991xd, Contreras:1999fm, Fathi:2009fu}.

The subcritical energy levels are unaccessible by Aubry-Mather theory, but nevertheless the minimal boundaries provide invariant sets by the Euler-Lagrange flow analogous to the Mather set. Indeed, for each $e\in(e_0(L),c_0(L)]$, we define
\begin{align*}
\Graph_e(L)\subset E^{-1}(e) 
\end{align*}
to be the union, over all components $\gamma$ of all minimal boundaries with energy $e$, of all points of the form $(\gamma(t),\dot\gamma(t))$. If $e_0(L)<c_0(L)$, the set $\Graph_{c_0(L)}(L)$ turns out to be precisely the Mather set $\Mather_0(L)$ (Proposition~\ref{p:Mather}). Therefore, our next theorem can be seen as an extension of Mather's graph theorem to subcritical energy levels on closed oriented surfaces.


\begin{thm}[Subcritical graph theorem] \label{t:graph}
Let $M$ be an oriented closed surface, and $L:\Tan M\to\R$ a Tonelli Lagrangian with $e_0(L)<c_0(L)$.
For each energy value $e\in (e_0(L),c_0(L)]$, the restriction $\pi|_{\Graph_e(L)}:\Graph_e(L) \rightarrow M$ is injective. 
\end{thm}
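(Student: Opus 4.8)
The plan is to argue by contradiction, performing a cut-and-paste surgery on the open regions bounded by minimal boundaries and then invoking the standard fact that a boundary with a corner cannot minimize the free-period action.

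Suppose $\pi|_{\Graph_e(L)}$ is not injective. Then there are components $\gamma_i$ of minimal boundaries $\partial\Sigma_i$ ($i=1,2$, with $\Sigma_i\subset M$ open) and times $t_i$ with $\gamma_1(t_1)=\gamma_2(t_2)=q$ but $v_1:=\dot\gamma_1(t_1)\ne v_2:=\dot\gamma_2(t_2)$. Since the components of a single minimal boundary are disjoint embedded curves (Lemma~\ref{l:minimal_topological_boundaries}), we must have $\partial\Sigma_1\ne\partial\Sigma_2$ and $\gamma_1\ne\gamma_2$ as oriented curves. Because $e>e_0(L)$, both $v_1,v_2$ are nonzero; and since $\tfrac{\diff}{\diff t}E(q,tv)=t\,\partial^2_{vv}L(q,tv)(v,v)>0$ for $t>0$, the energy is strictly increasing along rays in each fiber, so $v_1$ and $v_2$, lying on the same fiberwise level set $\{E(q,\cdot)=e\}$, cannot be positive multiples of one another. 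Hence near $q$ the images of $\gamma_1$ and $\gamma_2$ either cross transversally, or are tangent with opposite orientations (so that $\Sigma_1$ and $\Sigma_2$ lie on opposite sides of their common tangent at $q$).

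Assume first that no two components of $\partial\Sigma_1$ and $\partial\Sigma_2$ coincide along an arc. Since two distinct Euler--Lagrange orbits with the same energy have finite order of contact at each common point (otherwise, by uniqueness of solutions, they would coincide), the set $\partial\Sigma_1\cap\partial\Sigma_2$ is finite; put $U:=\Sigma_1\cup\Sigma_2$ and $W:=\Sigma_1\cap\Sigma_2$. These are open sets whose boundaries are again topological boundaries, with singular points contained in the finite set $\partial\Sigma_1\cap\partial\Sigma_2$. The pointwise identity $\mathbf 1_{\Sigma_1}+\mathbf 1_{\Sigma_2}=\mathbf 1_{U}+\mathbf 1_{W}$ shows that $\partial U$ and $\partial W$ are assembled from the arcs of $\partial\Sigma_1\cup\partial\Sigma_2$ cut at the intersection points, each such arc occurring exactly once; since $\SSS_e$ of a multicurve is the sum of the contributions of its arcs,
\begin{align*}
 \SSS_e(\partial U)+\SSS_e(\partial W)=\SSS_e(\partial\Sigma_1)+\SSS_e(\partial\Sigma_2)=2\inf_{\tb}\SSS_e.
\end{align*}
As $\partial U,\partial W\in\tb$, each summand is $\ge\inf_\tb\SSS_e$, forcing $\SSS_e(\partial U)=\SSS_e(\partial W)=\inf_\tb\SSS_e$; in particular $\partial U$ is a minimal boundary. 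But a transverse crossing of $\partial\Sigma_1$ and $\partial\Sigma_2$ at $q$ leaves exactly one of the four local sectors outside $U$, so $\partial U$ has a genuine corner at $q$ (and the same conclusion holds when the two images are tangent at $q$ but actually cross there, with $\partial U$ then having a cusp). Rounding this corner over a small neighbourhood produces an embedded boundary of an open set with strictly smaller $\SSS_e$, by the strict fiberwise convexity of $L$ (the corner-rounding lemma for the free-period action functional; see \cite{Contreras:2006yo, Abbondandolo:2013is}), contradicting $\SSS_e(\partial U)=\inf_\tb\SSS_e$.

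The remaining case -- which I expect to be the main obstacle -- is the one left aside above: $\gamma_1$ and $\gamma_2$ tangent at $q$ with opposite orientations and their images not crossing there, or more generally two components coinciding along an arc $A$ with opposite orientations. Here $\Sigma_1$ and $\Sigma_2$ sit on opposite sides, so near $q$ the surgered sets $U$ and $W$ have two boundary branches merely touching: $\partial U,\partial W$ are not embedded multicurves, smoothing the tangential touching gains no action, and in the overlap case the bookkeeping above acquires an extra term $\SSS_e(\gamma_1|_A)+\SSS_e(\gamma_2|_A)$ -- the action of a contractible loop -- whose sign is not under control when $e\le\cu(L)$. Disposing of these configurations requires the surface-topological input and the structural properties of minimal boundaries developed earlier (and, for $e=c_0(L)$, the comparison with the Mather set of Proposition~\ref{p:Mather}); once they are excluded, the transverse cut-and-paste together with corner rounding yields the theorem.
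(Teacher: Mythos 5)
Your overall strategy (surgery on $\Sigma_1\cup\Sigma_2$ and $\Sigma_1\cap\Sigma_2$, action bookkeeping, then a local shortcut at a corner) is indeed the skeleton of the paper's argument, but as written there are two genuine gaps, and the second one is exactly where the content of the theorem lies. First, your finiteness claim for $\partial\Sigma_1\cap\partial\Sigma_2$ is not justified: at a common point the two orbits have \emph{distinct} velocities (typically antiparallel, since $L$ need not be reversible), so uniqueness of solutions of the Euler--Lagrange equation says nothing about the order of contact of their \emph{images}; infinite-order tangencies and accumulating intersection points cannot be excluded this way even when no arc is shared. The paper explicitly allows $\Sigma_1\cap\Sigma_2$ and $M\setminus\overline{\Sigma_1\cup\Sigma_2}$ to have infinitely many connected components and deals with this by splitting off the ``small'' components $\Pi_1,\Pi_2$ (those contained in a ball of radius $\rho$). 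Related to this, your comparison step needs $\partial U,\partial W\in\tb$, i.e.\ embedded; tangential touchings between the surgered branches destroy embeddedness, so comparing directly with $\inf_\tb\SSS_e$ is not legitimate. The paper instead compares with $\inf_{\Mult_e}\SSS_e$ over the \emph{pinched} boundaries $\Mult_e$, uses the equality $\inf_\tb\SSS_e=\inf_{\Mult_e}\SSS_e$ established in the proof of Theorem~\ref{t:below_Mane}, and recovers embeddedness of the surgered minimizers a posteriori from Lemma~\ref{l:minima_are_embedded}.

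Second, the cases you defer (non-crossing tangencies and components sharing an arc with opposite orientations) are not disposed of by extra surface topology or by Proposition~\ref{p:Mather}; the missing idea is the Tonelli injectivity radius. The paper chooses $\rho<\rho_\inj$ so that (a) inside any ball of radius $\rho_\inj$ intersections of two orbit segments are monotonically ordered, and (b) \emph{any} periodic curve contained in such a ball has strictly positive $\SSS_e$-action -- a local statement valid for every $e>e_0(L)$, with no restriction $e>\cu(L)$. The leftover portions of $\ggamma\cup\zzeta$ (boundaries of small bigon components of $\Pi_1,\Pi_2$ and the ``thin'' doubled arcs you worry about) are grouped into loops lying in balls of radius $\rho$, so their total action is nonnegative, and strictly positive unless there are none; this is exactly how the sign of the quantity $\SSS_e(\gamma_1|_A)+\SSS_e(\gamma_2|_A)$ is controlled at all subcritical and critical energies. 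The resulting inequality forces $\SSS_e(\ttheta)=\SSS_e(\eeta)=\inf_{\Mult_e}\SSS_e$ with no leftovers, after which Lemma~\ref{l:minima_are_embedded} gives the contradiction you were aiming for: a transverse (or crossing-type) intersection makes a component of $\partial(\Sigma_1\cup\Sigma_2)$ fail to be a smooth orbit, while a pure tangency makes it fail to be embedded. Without this injectivity-radius mechanism and the comparison in $\Mult_e$, your proposal proves the statement only under the additional (and unverified) assumptions that all intersections are transverse and finite in number.
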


As an application of the subcritical graph theorem and of Theorem~\ref{t:below_Mane}, we prove the existence of simple Tonelli waists on subcritical energy levels of non-orientable closed surfaces, see Theorem \ref{t:non_orientable}.

\subsection{Results on supercritical energies}
Let $M$ be a closed oriented surface, and $L:\Tan M\to\R$ a Tonelli Lagrangian. It is well-known that, if $e>c_0(L)$ and $M$ has positive genus,  there exist infinitely many Tonelli waists with energy $e$, and at least a simple one. On the other hand, there are never minimal boundaries for $e>c_0(L)$, since $\inf_{\mathcal B}\SSS_e=0$ and the infimum is not attained. Our next theorem shows that there are at least locally minimal boundaries.

\begin{thm}
\label{t:just_above}
Let $M$ be an oriented closed surface, and $L:\Tan M\to\R$ a Tonelli Lagrangian such that $e_0(L)<c_0(L)$. There exists $\cw(L)>c_0(L)$ and, for each $e\in(c_0(L),\cw(L))$, a topological boundary $\ggamma=(\gamma_1,...,\gamma_m)$ whose components are (simple) Tonelli waists for $L$ with energy $e$.
\end{thm}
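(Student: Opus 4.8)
The plan is to continue the global minimal boundary that Theorem~\ref{t:below_Mane} produces at the energy $e=c_0(L)$ into a \emph{locally} minimal topological boundary for energies slightly above $c_0(L)$, and to let $\cw(L)$ be the supremum of the energies for which this persists at every intermediate level. I would start by fixing, via Theorem~\ref{t:below_Mane} with $e=c_0(L)$, a minimal boundary $\ggamma_0=(\gamma_{0,1},\dots,\gamma_{0,m})$ with $\SSS_{c_0(L)}(\ggamma_0)=0$; since $e_0(L)<c_0(L)$, Lemma~\ref{l:minimal_topological_boundaries} gives that its components are nonconstant simple Tonelli waists, so $\supp(\ggamma_0)$ is an embedded $1$-submanifold of positive length for an auxiliary Riemannian metric. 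The key elementary fact is the affine dependence $\SSS_e(\ggamma)=\SSS_{c_0(L)}(\ggamma)+(e-c_0(L))\,p(\ggamma)$ on the energy $e$, where $p(\ggamma)$ is the total period; together with $\inf_{\tb}\SSS_{c_0(L)}=0$ (part of Theorem~\ref{t:below_Mane}) this yields $\SSS_e\ge\SSS_{c_0(L)}\ge 0$ on all of $\tb$ for $e\ge c_0(L)$, with the infimum $0$ over $\tb$ approached for $e>c_0(L)$ only by shrinking components to points -- which is precisely why no global minimizer survives above $c_0(L)$.

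The heart of the construction is a constrained direct method. I would fix a tubular neighbourhood $N$ of $\supp(\ggamma_0)$ and let $\UU\subset\tb$ be the open set of topological boundaries whose support is a normal graph of small $C^1$-size $\varepsilon$ over $\supp(\ggamma_0)$ (in particular supported in $N$ and of the same isotopy type as $\ggamma_0$), so that every $\ggamma\in\overline\UU$ has total period bounded below by some $\ell>0$ and no component of $\ggamma$ can degenerate to a point. Writing $\partial\UU$ for the boundaries whose normal graph has $C^1$-size exactly $\varepsilon$, set $\delta:=\inf_{\partial\UU}\SSS_{c_0(L)}$. Assuming $\delta>0$, the proof concludes as follows: for $e\in\bigl(c_0(L),c_0(L)+\delta/p(\ggamma_0)\bigr)$ one has
\[
\inf_{\UU}\SSS_e\ \le\ \SSS_e(\ggamma_0)\ =\ (e-c_0(L))\,p(\ggamma_0)\ <\ \delta\ \le\ \inf_{\partial\UU}\SSS_e ,
\]
so any minimizing sequence for $\SSS_e$ on $\overline\UU$ has total period confined to a compact subinterval of $(0,\infty)$ and action bounded; by the compactness properties of topological boundaries established in the course of proving Theorem~\ref{t:below_Mane} and~\ref{t:graph}, it subconverges to a topological boundary $\ggamma_e\in\overline\UU$, and the strict inequality forces $\ggamma_e$ into the interior of $\UU$. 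Hence $\ggamma_e$ is a local minimizer of $\SSS_e$ over $\tb$; perturbing one component at a time keeps the multicurve embedded, pairwise disjoint and inside $\UU$, so by the (local) argument behind Lemma~\ref{l:minimal_topological_boundaries} each component of $\ggamma_e$ is a simple Tonelli waist with energy $e$. One may then take $\cw(L):=c_0(L)+\delta/p(\ggamma_0)$.

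The step that requires genuine work, and the one I expect to be the main obstacle, is the positivity $\delta>0$: topological boundaries lying on $\partial\UU$ are at a definite distance from $\supp(\ggamma_0)$ yet are prevented from shrinking, and one must show their $c_0(L)$-action stays bounded away from the minimal value $0$. If not, a minimizing sequence $\ggamma_n\in\partial\UU$ of $c_0(L)$-boundaries with components of length bounded below would, by the same compactness together with the identification $\Graph_{c_0(L)}(L)=\Mather_0(L)$ of Proposition~\ref{p:Mather}, subconverge to a minimal boundary whose support lies on $\partial\UU$ -- a contradiction once $\varepsilon$ is chosen so that the $C^1$-sphere of radius $\varepsilon$ avoids the (compact) set of $c_0(L)$-minimal boundaries that are $C^1$-near $\ggamma_0$. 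Such an $\varepsilon$ exists unless $c_0(L)$-minimal boundaries accumulate onto $\ggamma_0$ from all normal directions; in that degenerate case one replaces $\ggamma_0$ at the outset by a $c_0(L)$-minimal boundary of least total period within this family, so that the secondary term $(e-c_0(L))\,p(\cdot)$ breaks the tie and still produces a strict local well. Making this tie-breaking quantitative -- essentially a lower bound of the form $\SSS_{c_0(L)}(\ggamma)\gtrsim\bigl(p(\ggamma_0)-p(\ggamma)\bigr)_+$ for $\ggamma$ near $\ggamma_0$ -- is the technical core of the argument; everything else is a routine adaptation of the machinery used for Theorem~\ref{t:below_Mane} and the subcritical graph theorem.
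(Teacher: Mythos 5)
Your overall strategy (persist the critical minimal boundary as a strict local well of $\SSS_e$ for $e$ slightly above $c_0(L)$, using $\SSS_e=\SSS_{c_0(L)}+(e-c_0(L))p$) is the same in spirit as the paper's, but the well you build is around a \emph{single} minimal boundary $\ggamma_0$, and the degenerate case you yourself flag is a genuine gap, not a technicality. If the set of $c_0(L)$-minimizers is not isolated -- e.g.\ when the Mather set contains an annulus foliated by closed orbits, so that minimal boundaries with $\SSS_{c_0(L)}=0$ accumulate onto $\ggamma_0$ -- then every $C^1$-sphere $\partial\UU$ around $\ggamma_0$ contains such boundaries, $\delta=0$, and for constant-period foliations one even gets $\inf_{\partial\UU}\SSS_e=\SSS_e(\ggamma_0)$: there is no strict wall, the constrained minimizer may sit on $\partial\UU$, and then it is not a local minimizer of $\SSS_e$ in the full space, hence not a waist. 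Your proposed repair is not sound: the tie-breaking bound $\SSS_{c_0(L)}(\ggamma)\gtrsim\bigl(p(\ggamma_0)-p(\ggamma)\bigr)_+$ is unjustified and false in general -- near a critical point of $\SSS_{c_0(L)}$ the action excess is second order in the perturbation while the period deficit is generically first order, and in the foliated case the competing minimal boundaries have action exactly $0$ with period equal to $p(\ggamma_0)$, so no choice of ``least total period'' representative breaks the tie.

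The paper's proof closes exactly this hole by enclosing the \emph{entire} set of minimizers rather than one of them, and by working component-by-component in the $W^{1,2}$ free-period setting (after modifying $L$ to be fiberwise quadratic at infinity, so that $\SSS_e$ is $C^{1,1}$ and Palais--Smale on $\W(\R/\Z,M)\times[p_-,p_+]$). Starting from an \emph{irreducible} critical minimal boundary (Corollary~\ref{c:irreducible_minimal}), Lemma~\ref{l:actioncrit} shows each component $\gamma_i$ is a \emph{global} minimizer of $\SSS_{c_0(L)}$ in its connected component $\mathcal C_i$ of the loop space; the full minimizing set $\mathcal K_i\subset\mathcal C_i$ is compact (via the evaluation map into the compact energy level, using the Mather/graph structure), and Lemma~\ref{l:nbhd} produces neighborhoods $\WW_i\supset\mathcal K_i$ with a strict gap $\inf_{\partial\WW_i}\SSS_{c_0(L)}>c_i$ -- proved not by compactness of $\partial\WW_i$ (the infimum there need not be attained) but by a Palais--Smale anti-gradient deformation estimate across an annulus of definite width; the gap survives for $e\in[c_0(L),c_0(L)+\epsilon)$ uniformly because periods are confined to $[p_-,p_+]$. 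Minimizing $\SSS_e$ on each $\WW_i$ then gives interior minimizers, i.e.\ waists, which are simple and pairwise disjoint by Lemma~\ref{l:simple_orbits} and the choice of the neighborhoods, and Lemma~\ref{l:boundaries} extracts a topological boundary from the resulting embedded homological boundary. So the missing ideas in your proposal are precisely: (a) take a neighborhood of the whole compact minimizing set (per free homotopy class) instead of one minimal boundary, and (b) obtain the strict boundary gap via Palais--Smale and a gradient-flow crossing argument rather than via an unproven action--period inequality.
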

 
The assumption $e_0(L)<c_0(L)$ in Theorem~\ref{t:just_above} cannot be dropped. Indeed, if $g$ denotes the round Riemannian metric on $S^2$, 
the Tonelli Lagrangian $L:\Tan S^2\to\R$, $L(q,v)= \frac 12 g_q(v,v)$  satisfies $e_0(L)=c_0(L)=c(L)=0$ and does not have Tonelli waists on any energy level $e>0$. 
Theorem~\ref{t:just_above} becomes particularly significant precisely when $M=S^2$, as there are examples due to Katok \cite{Katok:1973mw, Ziller:1983rw} of Tonelli Lagrangians $L:\Tan S^2\to\R$ with $e_0(L)<c(L)$ and such that, on some energy level $e>c(L)$, there are only finitely many periodic orbits and no Tonelli waists at all. 

The waist theorem for Tonelli Lagrangians on surfaces, which was stated without proof in \cite[Corollary~2.7]{Abbondandolo:2014rb}, implies in particular that the presence of a contractible Tonelli waist on energy levels $e>\cu(L)$ forces the existence of infinitely many other contractible periodic orbits with energy $e$. This, together with Theorem~\ref{t:just_above}, implies a new multiplicity result for Tonelli periodic orbits on $S^2$ at energies just above the $c(L)$. 

\begin{thm}\label{t:multiplicity_just_above}
Let $L:\Tan S^2\to\R$ be a Tonelli Lagrangian such that $e_0(L)<c(L)$. For all energy levels $e\in(c(L),\cw(L))$, the Lagrangian system of $L$ admits infinitely many periodic orbits with energy $e$.
\end{thm}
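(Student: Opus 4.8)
The plan is to deduce Theorem~\ref{t:multiplicity_just_above} directly by combining Theorem~\ref{t:just_above} with the waist theorem for Tonelli Lagrangians on surfaces recalled just above its statement. The first observation I would make is that, since $S^2$ is simply connected, its universal abelian cover and its universal cover both coincide with $S^2$ itself; consequently $\cu(L)=c_0(L)=c(L)$. In particular the hypothesis $e_0(L)<c(L)$ of Theorem~\ref{t:multiplicity_just_above} is equivalent to $e_0(L)<c_0(L)$, so Theorem~\ref{t:just_above} applies to $L$ and provides a constant $\cw(L)>c_0(L)=c(L)$.

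Next, I would fix an energy value $e\in(c(L),\cw(L))=(c_0(L),\cw(L))$. By Theorem~\ref{t:just_above}, there exists a (nonempty) topological boundary $\ggamma=(\gamma_1,\dots,\gamma_m)$ on $S^2$ whose components are simple Tonelli waists with energy $e$; recall that a Tonelli waist is in particular a periodic orbit, being a local minimizer of the free-period action functional $\SSS_e$. The key remark is then that, since $\pi_1(S^2)=0$, each component $\gamma_i$ is automatically a contractible closed curve, hence a contractible Tonelli waist with energy $e$; in particular $\gamma_1$ is such a waist.

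To conclude, I would invoke the waist theorem on the surface $S^2$: since $e>c(L)=\cu(L)$, the presence of the contractible Tonelli waist $\gamma_1$ with energy $e$ forces the existence of infinitely many \emph{other} contractible periodic orbits with energy $e$. Adding $\gamma_1$ back in, the Lagrangian system of $L$ admits infinitely many periodic orbits with energy $e$, as claimed.

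As for the main obstacle: there is essentially no hard analytic step left once Theorem~\ref{t:just_above} and the waist theorem are granted, and the argument reduces to short topological bookkeeping. The only points worth stating carefully are the identification $\cu(L)=c_0(L)=c(L)$ for $M=S^2$, which simultaneously matches the hypotheses of the two theorems invoked and guarantees that the energy range $(c(L),\cw(L))$ lies strictly above $\cu(L)$, and the triviality of $\pi_1(S^2)$, which promotes the Tonelli waists produced by Theorem~\ref{t:just_above} to contractible ones — precisely the input required by the waist theorem of \cite{Abbondandolo:2014rb}, which is used here as a black box.
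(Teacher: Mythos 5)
Your argument is logically sound, and it is precisely the route sketched in the paper's introduction: on $S^2$ one has $\cu(L)=c_0(L)=c(L)$, so Theorem~\ref{t:just_above} applies and produces a (necessarily contractible) simple Tonelli waist with energy $e>\cu(L)$, and the waist theorem then yields infinitely many periodic orbits. However, this is genuinely different from the proof the paper actually gives, and for a reason you should be aware of: the waist theorem you invoke as a black box, \cite[Corollary~2.7]{Abbondandolo:2014rb}, is explicitly described in the paper as having been \emph{stated without proof} there, so the authors do not rely on it. Instead, they give a self-contained Bangert-type argument: fixing the simple waist $\gamma$ from Theorem~\ref{t:just_above}, they consider the minimax values $s_m$ of $\SSS_e$ over paths in $\W(\R/\Z,S^2)\times(0,\infty)$ joining $\gamma$ to its iterates $\gamma^m$; since on an oriented surface every iterate of a waist is again a local minimizer \cite[Lemma~4.1]{Abbondandolo:2015lt}, one gets $s_m>m\,\SSS_e(\gamma)$, and since $e>c_0(L)$ the functional $\SSS_e$ is positive and satisfies the Palais--Smale condition, so the $s_m$ are critical values tending to $+\infty$; finitely many (non-iterated) periodic orbits are then excluded via the non-mountain-pass theorem for high iterates (\cite[Theorem~2.6]{Abbondandolo:2014rb}, extended to general Tonelli Lagrangians in \cite{Asselle:2016qv}), which lets one push an almost-optimal minimax path below level $s_m$, a contradiction. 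In short: your proof is shorter and correctly handles the topological bookkeeping ($\cu=c_0=c$ on $S^2$, contractibility of the waist), but its validity rests entirely on a cited result without a published proof; the paper's proof is longer but uses only results that have been proved, in effect reproving the needed instance of the waist theorem for this setting.
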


We will show in Section~\ref{s:Finsler} that Theorem~\ref{t:multiplicity_just_above} provides a class of non-reversible Finsler metrics (of Randers type, as the Katok's one \cite{Katok:1973mw, Ziller:1983rw}) on $S^2$ having infinitely many closed geodesics.


\subsection{Organization of the paper}
In Section~\ref{s:orientable}, after some preparatory lemmas on embedded multicurves on surfaces, we prove the four main theorems stated in the introduction. In Section~\ref{s:non_orientable}, we prove Theorem~\ref{t:non_orientable} on the existence of simple Tonelli waists on subcritical energy levels in non-orientable closed surfaces. In Section~\ref{s:suff_cond} we provide a sufficient condition for the inequality $e_0(L)<\cu(L)$, and show in particular that it is verified for a $C^1$-generic Tonelli Lagrangian. 
In Section~\ref{s:Finsler} we study the implications of Theorem~\ref{t:multiplicity_just_above} to Finsler dynamics on $S^2$. In the Appendix, we prove a remark concerning Tonelli waists in the $\W$ functional setting of the free-period action functional.


\subsection{Acknowledgements}
We are grateful to Gonzalo Contreras, Leonardo Macarini, and Gabriel Paternain for a helpful discussion concerning their paper \cite{Contreras:2004lv}. Luca Asselle is partially supported by the DFG-grants AB 360/2-1 ``Periodic orbits of conservative systems below the Ma\~n\'e critical energy value'' and AS 546/1-1 ``Morse theoretical methods in Hamiltonian dynamics''. Marco Mazzucchelli is partially supported by the ANR COSPIN (ANR-13-JS01-0008-01).


\section{The orientable case}
\label{s:orientable}


\subsection{Homological versus topological boundaries}\label{s:hom_vs_top}

Let $M$ be a closed oriented surface. A multicurve $\ggamma$ in $M$ defines a homology class $[\ggamma]\in\Hom_1(M;\Z)$, and is called a homological boundary when $[\ggamma]=0$. Clearly, a topological boundary in $M$ is also a homological boundary. Conversely, we have the following statement. Given two multicurves $\ggamma=(\gamma_1,...,\gamma_m)$ and $\zzeta=(\zeta_1,...,\zeta_n)$ in $M$, their union is the multicurve $\ggamma\cup\zzeta=(\gamma_1,...,\gamma_m,\zeta_1,...,\zeta_n)$. 

\begin{lem}\label{l:boundaries}
An embedded multicurve $\ggamma$ is a homological boundary if and only if it can be written as a disjoint union of topological boundaries. 
\end{lem}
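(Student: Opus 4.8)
The implication ``$\Leftarrow$'' is the easy one: if $\ggamma$ is the disjoint union of topological boundaries $\ggamma^{(1)},\dots,\ggamma^{(r)}$ with $\ggamma^{(s)}=\partial\Sigma^{(s)}$, then, fixing a triangulation of $M$ adapted to $\ggamma$ so that each $\overline{\Sigma^{(s)}}$ becomes a simplicial $2$-chain with $\partial\overline{\Sigma^{(s)}}=\ggamma^{(s)}$ as oriented $1$-chains, one gets $[\ggamma^{(s)}]=0$ in $\Hom_1(M;\Z)$ for every $s$, hence $[\ggamma]=\sum_s[\ggamma^{(s)}]=0$. For the converse the plan is to build a ``height function'': a locally constant, integer-valued function $f$ on the open set $M\setminus\ggamma$ that increases by exactly $1$ each time one crosses a component of $\ggamma$ from its right-hand side to its left-hand side, where ``left'' and ``right'' are determined by the orientation of $M$ together with the orientation of the component. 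Granting such an $f$, for $n\in\Z$ set $\Sigma^{>n}:=\{x\in M\setminus\ggamma:f(x)>n\}$, an open subset of $M$, and let $\ggamma^{(n)}$ be its oriented topological boundary. The jump condition forces $\ggamma^{(n)}$ to consist exactly of those components $\gamma_i$ of $\ggamma$ on whose right-hand side $f$ equals $n$ (equivalently, whose left-hand side $f$ equals $n+1$), each occurring with its own orientation since $\Sigma^{>n}$ then lies to its left. Only finitely many $\ggamma^{(n)}$ are non-empty, they are pairwise disjoint, and every component of $\ggamma$ lies in exactly one of them; thus $\ggamma=\bigsqcup_{n}\ggamma^{(n)}$ is a disjoint union of topological boundaries, as wanted.

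It remains to construct $f$ out of the hypothesis $[\ggamma]=0$, which I would do homologically. Let $\Sigma_1,\dots,\Sigma_k$ be the connected components of $M\setminus\ggamma$. Cutting $M$ open along $\ggamma$ produces a compact oriented surface with boundary whose components $\widehat\Sigma_j$ resolve the $\Sigma_j$, and in which each component $\gamma_i$ of $\ggamma$ acquires a ``left copy'' and a ``right copy'' among the boundary circles, lying respectively in $\widehat\Sigma_{\ell(i)}$ and $\widehat\Sigma_{r(i)}$ (a priori $\ell(i)$ might equal $r(i)$). One identifies $\Hom_2(M,\ggamma;\Z)=\bigoplus_{j}\Z\,[\overline{\Sigma_j}]$ --- by a direct simplicial chain computation, or via Lefschetz duality $\Hom_2(M,\ggamma)\cong\Hom^0(M\setminus\ggamma)$ --- and computes the connecting homomorphism of the pair: $\partial[\overline{\Sigma_j}]=\sum_i\epsilon_{ij}[\gamma_i]$ in $\Hom_1(\ggamma;\Z)=\bigoplus_i\Z[\gamma_i]$, where $\epsilon_{ij}$ is $+1$ if $\widehat\Sigma_j$ carries the left copy of $\gamma_i$ and not its right copy, $-1$ if it carries the right copy and not the left, and $0$ otherwise, the signs being pinned down by the orientation of $M$ through the ``region-on-the-left'' boundary convention. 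Since $[\ggamma]=\sum_i[\gamma_i]$, exactness of $\Hom_2(M,\ggamma)\xrightarrow{\partial}\Hom_1(\ggamma)\to\Hom_1(M)$ shows that $[\ggamma]=0$ in $\Hom_1(M;\Z)$ precisely when there are integers $c_1,\dots,c_k$ with $\sum_j c_j\epsilon_{ij}=1$ for every $i$; reading off the left-hand side, this forces $\ell(i)\neq r(i)$ for all $i$ and $c_{\ell(i)}-c_{r(i)}=1$, and then $f:=c_j$ on $\Sigma_j$ has the required jump.

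The one genuinely non-formal step is the identification of $\Hom_2(M,\ggamma;\Z)$ together with the explicit formula for the connecting map in terms of the two sides of each $\gamma_i$, and keeping the orientation conventions consistent there; since $\ggamma$ is a priori only a topological submanifold, one should either fix a triangulation of $M$ adapted to $\ggamma$ or exploit that the multicurves at hand are piecewise smooth. A useful by-product of this computation --- exactly what makes the super-level sets $\Sigma^{>n}$ honest embedded boundaries --- is that $[\ggamma]=0$ prevents any component of $\ggamma$ from having the same region of $M\setminus\ggamma$ on both of its sides: that would force $\epsilon_{ij}=0$ for all $j$, hence $\sum_j c_j\epsilon_{ij}=0\neq1$. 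Alternatively, one can bypass the cut surface: by Poincar\'e duality $[\ggamma]=0$ is equivalent to the vanishing of the algebraic intersection numbers $[\ggamma]\cdot\delta$ for all $\delta\in\Hom_1(M;\Z)$, and then $f$ may be defined directly --- fix a basepoint in $M\setminus\ggamma$ and count, with signs, the crossings of $\ggamma$ along a path to the given point, the count being path-independent precisely because loops have zero algebraic intersection with $\ggamma$.
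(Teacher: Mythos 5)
Your argument is correct in substance, and its homological core is the same as the paper's: everything reduces to producing an integer label on the connected components of $M\setminus\ggamma$ that jumps by exactly $1$ across each component of $\ggamma$. In the paper these labels are the coefficients $n_j$ of a simplicial $2$-chain $\Pi$ with $\partial\Pi=\ggamma$, and the unit-jump relation is $n_{\iota_+(i)}=n_{\iota_-(i)}+1$; you obtain the same data from the connecting homomorphism of the pair $(M,\ggamma)$ (or from signed intersection counts), which is just a different packaging. Where you genuinely diverge is in how the decomposition is extracted from the labels: the paper runs an iterative propagation argument (starting from one region, repeatedly adjoining the regions on the positive side of the curves whose negative side is already included, termination being forced by the strictly increasing labels) and peels off one topological boundary at a time, then recurses; you instead take super-level sets of the height function $f$ and obtain all the sub-boundaries at once, the curves with right-hand value $n$ forming the boundary of the level-$n$ piece. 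Your extraction is arguably cleaner and makes the finiteness, disjointness, and exhaustiveness of the decomposition immediate; the paper's version avoids having to discuss frontiers of open sets at all, since its $\Sigma'$ is built directly as a union of closed regions.

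One small imprecision to fix: with $\Sigma^{>n}:=\{f>n\}$ as you literally define it, the topological frontier can contain ``slit'' components of $\ggamma$ having $f>n$ on \emph{both} sides (take two nested circles in $S^2$ with $f$ equal to $0,1,2$ on the three regions: the frontier of $\{f>0\}$ contains the inner circle, whose right-hand value is $1$, not $0$). Such slit curves do not bound coherently and must not appear in $\ggamma^{(n)}$. The remedy is to set $\Sigma^{>n}:=M\setminus\overline{\{f\le n\}}$ (equivalently, the interior of the closure of $\{f>n\}$); its frontier is then exactly the union of the $\gamma_i$ with right-hand value $n$, each with $\Sigma^{>n}$ on its left, and all your subsequent claims hold verbatim. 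The regularity caveat you raise (adapting a triangulation or a collar to the merely topologically embedded $\ggamma$) is handled in the paper by the same device of a suitable triangulation, and your by-product observation that $[\ggamma]=0$ forbids a component from having the same region of $M\setminus\ggamma$ on both of its sides is correct and is implicitly present in the paper's unit-jump relation as well.
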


\begin{proof}
We only have to prove the ``only if'' direction of the statement. Therefore, let us assume that the embedded multicurve $\ggamma=(\gamma_1,...,\gamma_m)$ is a homological boundary. Let $\Sigma_1,...,\Sigma_k$ be the connected components of $M\setminus\ggamma$. The oriented boundary of $\Sigma_i$ is an (a priori not necessarily disjoint) union $\partial\Sigma_i=\partial_+\Sigma_i\cup\partial_-\Sigma_i$, where the orientations of $\partial\Sigma_i$ and $\ggamma$ coincide on $\partial_+\Sigma_i$ and differ on $\partial_-\Sigma_i$. We introduce a suitable triangulation of $M$ such that the periodic curves $\gamma_i$ and the closures of the regions $\Sigma_j$ define simplicial chains that we still denote by $\gamma_i$ and $\Sigma_j$ with a common abuse of notation. Since the multicurve $\ggamma$ is a homological boundary, we can find a 2-chain $\Pi$ such that $\ggamma=\partial\Pi$.

We claim that there exist $n_1,...,n_k\in\Z$ such that $\Pi=n_1\Sigma_1+...+n_k\Sigma_k$. Indeed, $\Pi$ is an integer linear combination of the 2-simplexes of the triangulation; if $\Delta$ and $\Delta'$ are 2-simplexes contained in the closure of the same $\Sigma_i$, then their coefficients must be the same; this is clear if $\Delta$ and $\Delta'$ are adjacent and, in general, follows from the connectedness of $\Sigma_i$. We conclude that
\begin{align}\label{e:chain_equality}
\gamma_1+\ldots+\gamma_m
=
n_1(\partial_+\Sigma_1 + \partial_-\Sigma_1) +\ldots+n_k (\partial_+\Sigma_k + \partial_-\Sigma_k).
\end{align}
We define the functions 
$
\iota_{\pm}:\{1,...,m\}\to\{1,...,k\} 
$
such that, for each $i=1,...,m$, the oriented loop $\gamma_i$ belongs to $\partial_+\Sigma_{\iota_+(i)}$ and, with reverse orientation, to $\partial_-\Sigma_{\iota_-(i)}$. Equation~\eqref{e:chain_equality} implies that 
\begin{align}\label{e:n_i}
n_{\iota_+(i)} = n_{\iota_-(i)} + 1,\qquad\forall i=1,...,m.
\end{align}

We set $J_1:=\{1\}$ and proceed iteratively, for increasing values of $h$ starting at $h=1$, as follows: if $J_h\neq\varnothing$, we set
\begin{align*}
J_{h+1}
:=
\bigcup_{j\in J_h}
\iota_+(\iota_-^{-1}(j)).
\end{align*}
Notice that, if $j\in J_h$ and $j'\in \iota_+(\iota_-^{-1}(j))$, Equation~\eqref{e:n_i} implies that $n_{j'}=n_j+1$. Therefore $J_{h+1}\cap \big( J_1\cup...\cup J_h\big) =\varnothing$, which implies that the iterative procedure will eventually stop, giving $J_{h+1}=\varnothing$ for some integer $h$. This means that the oriented boundary $\ggamma'$ of the compact subset
\begin{align*}
\Sigma'
:=
\bigcup_{j\in J_1\cup...\cup J_h} \!\!\!\! \overline{\Sigma}_j.
\end{align*}
is a union of some connected components of $\ggamma$. Now we repeat the same procedure with the homological boundary $\ggamma_1:= \ggamma \setminus \ggamma'$; 
the process clearly stops after finitely many steps, thus giving the desired decomposition of $\ggamma$ as the disjoint union of topological boundaries. 
\end{proof}

We call an embedded homological boundary  \textbf{irreducible} if it cannot be decomposed as a disjoint union of two non-empty homological boundaries. Lemma~\ref{l:boundaries} implies that irreducible homological boundaries are indeed topological boundaries. We recall that a non-zero homology class $h\in\Hom_1(M;\Z)$ is called primitive when it is not of the form $h=nk$, with $n>1$ and $k\in\Hom_1(M;\Z)$.

\begin{lem}\label{l:irreducible}
Let $\ggamma=(\gamma_1,...,\gamma_m)$ be an irreducible topological boundary on an oriented closed surface $M$ of genus $g$. Then $m\leq g+1$ and, if $m>1$, all the classes $[\gamma_i]\in \Hom_1(M;\Z)$ are non-zero, primitive and pairwise distinct.
\end{lem}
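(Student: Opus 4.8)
The plan is to base everything on a simple reformulation of irreducibility coming from Lemma~\ref{l:boundaries}: an embedded homological boundary $\ggamma=(\gamma_1,\dots,\gamma_m)$ is irreducible if and only if no proper nonempty sub-multicurve $(\gamma_i)_{i\in I}$, $\varnothing\neq I\subsetneq\{1,\dots,m\}$, is itself a homological boundary (if one were, its complement in $\ggamma$ would be one too, since $\sum_{i\notin I}[\gamma_i]=[\ggamma]-\sum_{i\in I}[\gamma_i]=0$).

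The central step is to show that $\ggamma$ bounds a \emph{connected} subsurface on each side. Write $\ggamma=\partial\Sigma$ with $\Sigma$ a compact subsurface of $M$, and set $\Sigma'=\overline{M\setminus\Sigma}$, so that $M=\Sigma\cup\Sigma'$ and $\Sigma\cap\Sigma'=\ggamma$. Because $\ggamma$ is the \emph{oriented} boundary of $\Sigma$, for each $i$ the region of $M\setminus\ggamma$ on one side of $\gamma_i$ lies in $\Sigma$ and the region on the other side lies in $\Sigma'$; in particular each $\gamma_i$ lies in the boundary of a single connected component of $\Sigma$. Hence, if $S$ is a connected component of $\Sigma$, its boundary $\partial S$ is a nonempty sub-multicurve of $\ggamma$, and, viewing $S$ as a $2$-chain in $M$ exactly as in the proof of Lemma~\ref{l:boundaries}, we get $[\partial S]=\partial[S]=0$ in $\Hom_1(M;\Z)$, so $\partial S$ is a homological boundary. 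By the reformulation above this forces $\partial S=\ggamma$, hence $\Sigma$ is connected; the same argument applies to $\Sigma'$.

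Granting connectedness, the bound $m\leq g+1$ comes from the Euler characteristic: $\Sigma$ is a connected surface of some genus $h\geq 0$ with $m$ boundary circles and $\Sigma'$ is connected of genus $h'\geq 0$ with $m$ boundary circles, so additivity of $\chi$ under gluing along circles gives $2-2g=\chi(M)=(2-2h-m)+(2-2h'-m)$, i.e.\ $m=1+g-h-h'\leq g+1$. For the remaining assertions assume $m>1$. If $[\gamma_i]=0$ then $(\gamma_i)$ is a proper nonempty homological boundary, contradicting irreducibility; so every $[\gamma_i]$ is nonzero, hence $\gamma_i$ is non-separating, hence admits a simple closed curve $\delta$ meeting it transversally in one point, so $[\gamma_i]\cdot[\delta]=\pm1$ and $[\gamma_i]$ is primitive. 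Finally, suppose $[\gamma_i]=[\gamma_j]$ with $i\neq j$: if $m=2$ then $2[\gamma_i]=[\gamma_i]+[\gamma_j]=[\ggamma]=0$ forces $[\gamma_i]=0$ (as $\Hom_1(M;\Z)$ is torsion-free), a contradiction; if $m\geq3$ then $\gamma_i\cup\gamma_j$ is null-homologous mod $2$ and hence separates $M$, while on the other hand $M\setminus(\gamma_i\cup\gamma_j)$ is connected, being obtained by gluing the connected surfaces $\Sigma\setminus(\gamma_i\cup\gamma_j)$ and $\Sigma'\setminus(\gamma_i\cup\gamma_j)$ along the remaining $m-2\geq1$ curves — a contradiction. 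So the classes are pairwise distinct.

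The main obstacle is the connectedness step; once $\Sigma$ and $\Sigma'$ are known connected, the rest is routine surface topology. The delicate point there is purely bookkeeping: verifying that each $\gamma_i$ meets exactly one component of $\Sigma$ on exactly one side, so that the boundary of that component really is a sub-multicurve of $\ggamma$ with vanishing class in $\Hom_1(M;\Z)$. An alternative route to pairwise distinctness (which would also re-prove $m-1\leq g$) is a short Mayer--Vietoris computation for $M=\Sigma\cup_{\ggamma}\Sigma'$ showing that the kernel of the map $\Z^m\to\Hom_1(M;\Z)$, $e_i\mapsto[\gamma_i]$, is exactly $\Z\cdot(1,\dots,1)$, combined with the remark that disjointness makes the $[\gamma_i]$ span an isotropic subgroup for the intersection pairing.
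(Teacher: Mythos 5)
Your proof is correct, but it follows a genuinely different route from the paper's in the two non-trivial parts of the statement. Your key structural input -- that irreducibility forces \emph{both} complementary subsurfaces $\overline{\Sigma}$ and $M\setminus\Sigma$ to be connected, via the reformulation that no proper nonempty sub-multicurve of $\ggamma$ can be null-homologous -- does not appear in the paper at all, and once it is in place your Euler characteristic count $\chi(M)=\chi(\overline\Sigma)+\chi(M\setminus\Sigma)$ gives the sharper identity $m=1+g-h-h'$, from which $m\leq g+1$ is immediate. The paper instead proves $m\leq g+1$ by a homological argument: the classes $[\gamma_1],\dots,[\gamma_m]$ span an isotropic subspace of $\Hom_1(M;\Q)$ for the intersection form (dimension at most $g$), Carath\'eodory's theorem produces a vanishing non-negative integer combination of at most $g+1$ of them, and an auxiliary embedded multicurve together with Lemma~\ref{l:boundaries} turns this into a violation of irreducibility. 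For pairwise distinctness the paper again invokes Lemma~\ref{l:boundaries} applied to $(\gamma_i,\overline{\gamma_j})$ to build a region $\Sigma''$ whose boundary is a proper sub-multicurve of $\ggamma$, whereas you argue via the mod~$2$ separation criterion (with a separate torsion-freeness argument for $m=2$, which is needed since your connectivity argument requires a third curve); both are sound, and your version in fact shows that no two components can even have classes agreeing mod~$2$ when $m\geq 3$. For primitivity you give the standard dual-curve argument rather than citing \cite[Proposition~1.4]{Farb:2012rc}; for non-vanishing of the classes you argue exactly as the paper does. The only point to be careful about, which you correctly flag as bookkeeping, is the local structure of $\Sigma$ along $\ggamma$: since $\ggamma$ is the oriented boundary of the open set $\Sigma$, each $\gamma_i$ has $\Sigma$ on exactly one side, so the frontier of each component of $\Sigma$ (or of $M\setminus\overline\Sigma$) is a union of entire curves $\gamma_i$, each attached to exactly one component on each side; this is what makes the boundary of a component a genuine sub-multicurve and makes the connectedness and the $m\geq 3$ connectivity argument go through. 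Your approach buys a cleaner geometric picture and the exact count $m=1+g-h-h'$; the paper's buys independence from the connectedness analysis at the cost of the Carath\'eodory/isotropy detour.
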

\begin{proof}
If $[\gamma_{i_0}]=0$ for some $i_0$, then $\gamma_{i_0}$ is itself a topological boundary. Therefore, if we assume that $m>1$, all the classes $[\gamma_i]\in \Hom_1(M;\Z)$ are non-zero. Since every component $\gamma_i$ is a simple curve, its homology class $[\gamma_i]$ is primitive, see \cite[Proposition~1.4]{Farb:2012rc}. 

Let us assume by contradiction that $[\gamma_i]=[\gamma_j]$ for some $i\neq j$. We denote $\overline{\gamma_j}$ the curve $\gamma_j$ with opposite orientation. The multicurve $\ggamma'=(\gamma_i,\overline{\gamma_j})$ is a homological boundary, and since both $[\gamma_i]$ and $[\overline{\gamma_j}]=-[\gamma_j]$ are non-zero in $\Hom_1(M;\Z)$, Lemma~\ref{l:boundaries} implies that $\ggamma'$ is a topological boundary. Let $\Sigma'\subset M\setminus\ggamma'$ be the connected component whose oriented boundary is $\ggamma'$. The oriented boundary of the open subset $\Sigma'':=\Sigma\cup\gamma_j\cup\Sigma'\subset M$ is a topological boundary $\ggamma''$. Notice that $\gamma_j$ is not a component of $\ggamma''$. Therefore $\ggamma''$ is strictly contained in $\ggamma$, which contradicts the irreducibility of $\ggamma$.

Finally, assume by contradiction that $m>g+1$. Let $V\subset \Hom_1(M;\Q)$ be the vector space over the rational numbers generated by the homology classes $[\gamma_1],...,[\gamma_m]$. We recall that $\Hom_1(M;\Q)$ can be equipped with a symplectic bilinear form $\omega$ given by $\omega(h,k)=h\cap k$, where $\cap$ denotes the homology intersection product. Since the curves $\gamma_i$ are pairwise disjoint, $V$ is an isotropic subspace of $\Hom_1(M;\mathbb Q)$, and in particular $\dim V\leq \tfrac12\dim\Hom_1(M;\Q)= g$. Since $\ggamma$ is null-homologous, the origin in $V$ is in the convex hull of the set $\{[\gamma_1],...,[\gamma_m]\}$. By Caratheodory's Theorem, there are $g+1$ non-negative integers $n_1,...,n_{g+1}$ and $g+1$ components of $\ggamma$, say $\gamma_1,...,\gamma_{g+1}$, such that
\[
0=n_1[\gamma_1]+\ldots+n_{g+1}[\gamma_{g+1}].
\]
Let $\ggamma'$ be an auxiliary embedded homological boundary $\ggamma'$ with $n_i$ components in the class $[\gamma_i]$, for $i=1,...,g+1$. By Lemma~\ref{l:boundaries}, we can write $\ggamma'$ as a disjoint union $\ggamma'=\ggamma''\cup\ggamma'''$ for some irreducible topological boundary $\ggamma''=(\gamma''_1,...,\gamma''_{m'})$. We already proved that the components of irreducible topological boundaries have pairwise distinct homology classes. Therefore, there exist pairwise distinct indices $i_1,...,i_{m'}\in\{1,...,g+1\}$ such that $[\gamma''_j]=[\gamma_{i_j}]$ for all $j=1,...,m'$. In particular,  $m'\leq g+1$ and $[\gamma_{i_1}]+...+[\gamma_{i_{m'}}]=0$. This contradicts the irreducibility of $\ggamma$.
\end{proof}


\subsection{Subcritical minimal boundaries}\label{s:below_Mane}

Let $M$ be a closed oriented surface. For a number $m\in\N$, we denote by $\tb(m)\subset\tb$ the subspace of topological boundaries in $M$ with $m$ components. We endow $\tb(m)$ with the absolutely continuous topology, and we consider its closure $\overline{\tb(m)}$, which is the space of ``pinched'' topological boundaries. Notice that the components of the multicurves in $\overline{\tb(m)}$ are allowed to intersect one another, but only with tangencies.

Let $L:\Tan M\to\R$ be a Tonelli Lagrangian. The notion of injectivity radius from Riemannian geometry generalizes to Tonelli systems as follows (see \cite[Subsection~2.1]{Asselle:2016qv} for the proofs). Let $g$ be an auxiliary Riemannian metric on $M$, which induces a  distance $d:M\times M\to[0,\infty)$. For each energy value $e>e_0(L)$ there exist $\tau_\inj>0$ and $\rho_\inj>0$ such that, for each $q_0,q_1\in M$ with $d(q_0,q_1)\leq\rho_\inj$ there exists $\tau\in[0,\tau_\inj]$ and a smooth curve $\gamma:[0,\tau]\to M$ that is a unique local free-time action minimizer with endpoints $\gamma(0)=q_0$ and $\gamma(\tau)=q_1$. This means that, if $B\subset M$ denotes the closed Riemannian ball of radius $\rho_\inj$ centered at $q_0$, for any other absolutely continuous curve $\zeta:[0,\sigma]\to B$ such that $\zeta(0)=q_0$ and $\zeta(\sigma)=q_1$, we have
\begin{align*}
\int_0^\tau L(\gamma(t),\dot\gamma(t))\,\diff t + \tau e
<
\int_0^\sigma L(\zeta(t),\dot\zeta(t))\,\diff t + \sigma e.
\end{align*}
For $m\in\N$, we denote by $\ELMult_e(m)$ the space of absolutely continuous multicurves with $m$ components $\ggamma=(\gamma_1,...,\gamma_m)$ such that each $\gamma_i$ is the concatenation of a finite number of unique local  free-time action minimizers. We set
\begin{align*}
 \Mult_e & := \bigcup_{m\in\N}  \overline{\tb(m)} \cap \ELMult_e(m).
\end{align*}
In \cite[Lemmas~2.7-2.8]{Asselle:2016qv}, the first and third authors investigated the properties of the minimizers of the free-period action functional $\SSS_e$ over the spaces $\overline{\tb(m)} \cap \ELMult_e(m)$, and in particular proved the following.
\begin{lem}\label{l:minima_are_embedded}
For every $e>e_0(L)$, any multicurve $\ggamma\in\Mult_e$ satisfying $\SSS_e(\ggamma)=\inf\SSS_e|_{\Mult_e}$ is embedded, and its components are Tonelli waists with energy $e$.
\hfill\qed
\end{lem}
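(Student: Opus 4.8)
The plan is to show that a minimizer $\ggamma=(\gamma_1,\dots,\gamma_m)\in\Mult_e$ of $\SSS_e|_{\Mult_e}$ can contain no ``defect'' -- neither a corner along a component, nor a self-intersection of a component, nor an intersection or tangency between two distinct components -- because any such defect can be removed by a modification supported in an arbitrarily small ball, keeping us inside $\Mult_e$, which does not increase the action and, after rounding a corner, strictly decreases it. Once $\ggamma$ is known to be an embedded topological boundary whose components are smooth closed extremals of energy $e$, the Tonelli waist property follows from a ``breaking'' argument.

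\emph{The components are smooth extremals with energy $e$.} By definition of $\ELMult_e(m)$, each $\gamma_i$ is a finite concatenation of unique local free-time action minimizers; each such piece is a smooth extremal, and the first-order condition in the free time variable forces its energy to equal $e$, whence (conservation of $E$) $E\equiv e$ along $\gamma_i$. Thus $\gamma_i$ is a piecewise smooth extremal of energy $e$ with only finitely many possible corners, at the junctions. Suppose $\gamma_i$ had a corner at a time $t_0$, at a point $q=\gamma_i(t_0)$ which is not a multiple point of $\ggamma$. For small $\varepsilon>0$ the sub-arc $\sigma=\gamma_i|_{[t_0-\varepsilon,t_0+\varepsilon]}$ lies in the ball $B$ of radius $\rho_\inj$ about $q$, has endpoints within distance $\rho_\inj$, and -- having a corner -- differs from the unique local free-time minimizer $\eta$ joining them; replacing $\sigma$ by $\eta$ gives $\ggamma'\in\Mult_e$ with $\SSS_e(\ggamma')<\SSS_e(\ggamma)$. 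The modification is $C^0$-small and supported in $B$, which meets no other part of $\ggamma$, so $\ggamma'$ is again a pinched topological boundary (seen by first perturbing $\ggamma$ slightly to an element of $\tb(m)$, performing the surgery there, and passing to the limit). This contradiction shows that any remaining corner sits at a multiple point of $\ggamma$.

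\emph{Embeddedness.} Let $q$ be a multiple point of $\ggamma$, where two arcs of $\ggamma$ (of the same or of distinct components) meet. If their velocities at $q$ are equal, then by uniqueness of the Euler--Lagrange flow the two arcs lie on one orbit; but a component of an element of $\Mult_e$, being (by the previous step) a smooth closed curve that is an absolutely continuous limit of embedded circles, is primitive and not of the form $\alpha\cdot\bar\alpha$ (a ``back-and-forth'' curve, which has corners), and distinct components of a topological boundary have disjoint images, so this case does not occur and the two velocities at $q$ differ. Now perform the reconnection of the two branches at $q$ compatible with the orientation of $M$ and with $\ggamma$ bounding: it yields $\ggamma'\in\Mult_e$, possibly with a different number of components, built from the very same arcs, so $\SSS_e(\ggamma')=\SSS_e(\ggamma)$, but with a genuine corner at $q$ since the velocities do not match. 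Choosing $B$ small enough that after reconnection only the two resolved branches pass through it, rounding this corner as above strictly decreases the action within $\Mult_e$ -- a contradiction. Hence $\ggamma$ is embedded, and by the previous step its components are smooth closed extremals of energy $e$, none constant because $e>e_0(L)$.

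\emph{Tonelli waists, and the main difficulty.} Fix a component $\gamma_i$ and an embedded periodic curve $\delta$ that is $C^0$-close to $\gamma_i$, so $(\gamma_1,\dots,\delta,\dots,\gamma_m)$ is a topological boundary. Replacing each arc of a fine partition of $\delta$ by the unique local free-time minimizer between its endpoints gives $\delta'$, a concatenation of such minimizers, with $\SSS_e(\delta')\le\SSS_e(\delta)$ and $(\gamma_1,\dots,\delta',\dots,\gamma_m)\in\Mult_e$; minimality of $\ggamma$ then gives $\SSS_e(\gamma_i)\le\SSS_e(\delta')\le\SSS_e(\delta)$, and since embedded periodic curves are dense in the absolutely continuous topology and $\SSS_e$ is lower semicontinuous, $\gamma_i$ is a Tonelli waist of energy $e$. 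The substantial point throughout is the topological bookkeeping behind the two surgeries -- corner rounding and, above all, the orientation-compatible reconnection at a multiple point -- namely the verification that they map $\Mult_e$ into itself, producing again pinched topological boundaries of $M$; this is exactly where orientability of $M$ is essential (one tracks the orientations of the curves and of the complementary regions $\Sigma$, and uses that the moves are $C^0$-small and locally supported so they create no new crossings elsewhere), and the cases of higher-order tangencies between distinct orbits, of multiple points of higher multiplicity, and of defects accumulating at a point require extra care.
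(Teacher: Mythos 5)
The paper itself offers no proof of this lemma: it is quoted from \cite[Lemmas~2.7--2.8]{Asselle:2016qv}, and the whole content of those lemmas is precisely the point your proposal defers. Your surgeries (shortcutting a corner, reconnecting two branches at a multiple point) are the right kind of moves, but the nontrivial assertion is that each of them lands back in $\Mult_e$, i.e.\ produces a multicurve that is again in $\overline{\tb(m')}\cap\ELMult_e(m')$ for some $m'$, and this you never verify. A shortcut arc can cross other strands of $\ggamma$, or the continuation of $\gamma_i$ just beyond the replaced sub-arc, creating transverse intersections; after that the object is at best an embedded \emph{homological} boundary, and one cannot simply decompose it via Lemma~\ref{l:boundaries} and keep a piece, because a disjoint union of topological boundaries need not be a topological boundary (two parallel circles with equal orientations), nor does a bound on the total action of the pieces contradict minimality when $\inf\SSS_e|_{\Mult_e}\leq 0$. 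Your ``perturb to $\tb(m)$, do the surgery there, pass to the limit'' does not address this, since the surgered perturbed multicurve need not be embedded. Moreover several local claims ignore that $\ggamma$ is only a \emph{pinched} boundary: distinct components of an element of $\overline{\tb(m)}$ may touch and even coincide (typically with opposite orientations), so ``distinct components of a topological boundary have disjoint images'' is not available; all intersections are tangencies, so at a multiple point the velocities are parallel, and in the same-direction case they are \emph{equal} (on $E^{-1}(e)$ the direction determines the vector), forcing coinciding orbits --- a case that must be excluded by structure-of-limit arguments (a limit of simple loops is not iterated, coinciding components carry opposite orientations), not dismissed; in the opposite-direction case the intersection can be a whole arc or the whole curve, so there is no isolated point at which to perform your reconnection, and the reconnection at an anti-parallel tangency produces cusps whose membership in $\overline{\tb}$ is again exactly what is in question. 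Flagging all of this as ``topological bookkeeping requiring extra care'' leaves the heart of the proof missing.

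The final waist step contains an outright error. A Tonelli waist must minimize against \emph{all} nearby absolutely continuous loops, not only embedded ones, and your bridge fails twice: embedded loops are not dense in the $C^0$ (hence not in the absolutely continuous) topology --- no embedded loop is $C^0$-close to a loop with a transverse self-crossing, since the two strands of any $C^0$-small perturbation must still intersect --- and lower semicontinuity of $\SSS_e$ gives $\SSS_e(\zeta)\leq\liminf\SSS_e(\delta_n)$, which is the wrong direction for transferring $\SSS_e(\delta_n)\geq\SSS_e(\gamma_i)$ to a general competitor $\zeta$. The correct route is the one of Lemma~\ref{l:minimal_topological_boundaries} (and \cite[Lemma~2.8]{Asselle:2016qv}): if $\gamma_i$ were not a waist, a broken-extremal interpolation of an AC-close competitor yields a piecewise smooth curve that is $C^1$-close to $\gamma_i$ with strictly smaller action; re-interpolating along a fine partition by unique local free-time minimizers keeps it $C^1$-close, hence simple and disjoint from the other components, so the swapped multicurve lies in $\tb(m)\cap\ELMult_e(m)\subset\Mult_e$ and contradicts minimality. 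As it stands, your argument only shows that $\gamma_i$ minimizes among nearby embedded curves.
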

The proof of \cite[Theorem~1.1]{Asselle:2016qv} actually implies the following statement.

\begin{lem}\label{l:negative_implies_waist}
If, for some $e>e_0(L)$, $\SSS_e$ attains negative values on the space $\Mult_e$, then there exists a multicurve $\ggamma\in\Mult_e$ such that $\SSS_e(\ggamma)=\inf\SSS_e|_{\Mult_e}$. 
\hfill\qed
\end{lem}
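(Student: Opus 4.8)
The plan is to run the direct method: take a minimizing sequence $(\ggamma_n)_{n\in\N}$ in $\Mult_e$, so that $\SSS_e(\ggamma_n)\to\inf\SSS_e|_{\Mult_e}$, and extract from it a subsequence converging to an element of $\Mult_e$ that realizes the infimum. This is precisely the compactness argument underlying the proof of \cite[Theorem~1.1]{Asselle:2016qv}, and I would organize it in three steps: an a priori lower bound for $\SSS_e$ on topological boundaries; a resulting uniform bound on the ``combinatorial complexity'' of near‑minimizers; and an Arzel\`a--Ascoli passage to the limit.

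For the a priori bound, write $L=\ell+\eta$, where $\eta$ is the $1$-form on $M$ defined by $\eta_q:=\partial_vL(q,0)$ and $\ell(q,v):=L(q,v)-\eta_q(v)$. Then $v\mapsto\ell(q,v)$ is fiberwise strictly convex with $\partial_v\ell(q,0)=0$, hence has a global minimum at $v=0$, so $\ell(q,v)\geq\ell(q,0)=L(q,0)=-E(q,0)\geq-e_0(L)$ and therefore $\ell(q,v)+e\geq e-e_0(L)>0$ for all $(q,v)\in\Tan M$. If $\ggamma=(\gamma_1,\dots,\gamma_m)\in\tb$ with $\ggamma=\partial\Sigma$, Stokes' theorem gives
\begin{align*}
\SSS_e(\ggamma)=\sum_{i}\int_0^{p_i}\!\big(\ell(\gamma_i,\dot\gamma_i)+e\big)\,\diff t+\int_{\Sigma}\diff\eta\ \geq\ (e-e_0(L))\,\Big(\textstyle\sum_i p_i\Big)-\int_M|\diff\eta|.
\end{align*}
By approximation (both sides being continuous) the same estimate holds on $\overline{\tb(m)}$, hence on $\Mult_e$; thus $\inf\SSS_e|_{\Mult_e}\geq-\int_M|\diff\eta|>-\infty$, and any $\ggamma\in\Mult_e$ with $\SSS_e(\ggamma)\leq 0$ has total period $\sum_i p_i\leq T_0:=\int_M|\diff\eta|/(e-e_0(L))$ and, since its components lie on $E^{-1}(e)$ where speeds are bounded by some $v_{\max}$, total $g$-length at most $\ell_0:=v_{\max}T_0$.

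Next I would upgrade this to a uniform complexity bound on the minimizing sequence $(\ggamma_n)$, which we may assume satisfies $\SSS_e(\ggamma_n)<0$. First, a component of period $<\epsilon_1$ (for a suitable threshold $\epsilon_1>0$) is, by the systolic and isoperimetric inequalities of $(M,g)$ together with the estimate above, a contractible embedded loop of strictly positive action; removing it leaves an embedded null-homologous multicurve, hence again an element of $\Mult_e$ by Lemma~\ref{l:boundaries}, with strictly smaller (still negative) action. So we may assume every component of $\ggamma_n$ has period $\geq\epsilon_1$, whence $\ggamma_n$ has at most $T_0/\epsilon_1$ components. Second, replacing each arc of length $\approx\rho_\inj$ of a component by the unique local free-time action minimizer with the same endpoints does not increase the action (this is the surgery behind Lemma~\ref{l:minima_are_embedded}, cf.\ \cite[Lemma~2.7]{Asselle:2016qv}) and produces a nearby multicurve still in $\Mult_e$; so we may further assume each component of $\ggamma_n$ is a concatenation of at most $\ell_0/\rho_\inj+1$ unique local minimizers. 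After relabeling, each $\ggamma_n$ is then encoded by a uniformly bounded number of ``break points'' in the compact manifold $M$.

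Finally, since the combinatorial type (number of components and of local minimizers in each) ranges in a finite set, I pass to a subsequence along which it is constant and along which every break point converges in $M$. As the unique local free-time action minimizer joining two points at distance $\leq\rho_\inj$, and its action, depend continuously on the endpoints, the multicurves converge in the absolutely continuous topology to a limit $\ggamma_\infty$ which is again a concatenation of unique local minimizers, i.e.\ $\ggamma_\infty\in\ELMult_e(m)$, and which lies in $\overline{\tb(m)}$ because that space is closed; hence $\ggamma_\infty\in\Mult_e$, and by continuity of $\SSS_e$ along the convergence $\SSS_e(\ggamma_\infty)=\lim_n\SSS_e(\ggamma_n)=\inf\SSS_e|_{\Mult_e}$. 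The one genuinely substantive point is the a priori lower bound of the second step --- equivalently, the fact that $\SSS_e$ cannot be made arbitrarily negative on topological boundaries of a closed surface --- which crucially uses that $M$ is closed (so that $\int_M|\diff\eta|<\infty$) and $e>e_0(L)$; the surgeries and the Arzel\`a--Ascoli argument are routine, the only mild care being that the deletion surgery only ever removes components and is therefore harmless.
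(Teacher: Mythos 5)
Your overall skeleton (a priori lower bound via Stokes, removal of short components, reduction to boundedly many local free-time minimizing segments, then finite-dimensional compactness) is exactly the scheme behind \cite[Theorem~1.1]{Asselle:2016qv}, which the paper invokes for this lemma, and it matches the bounds the paper itself uses in the critical case $e=c_0(L)$. However, there is a genuine gap at the decisive step, the one that produces the uniform bound on combinatorial complexity. You assert that replacing each arc of length $\approx\rho_\inj$ of a component by the unique local free-time action minimizer with the same endpoints ``produces a nearby multicurve still in $\Mult_e$''. Membership in $\ELMult_e(m)$ is clear, but membership in $\overline{\tb(m)}$ is not: the new minimizing arc is only controlled at the scale $\rho_\inj$ (it is \emph{not} arbitrarily close to the arc it replaces, since that arc may be a long concatenation of short orbit segments wiggling inside the ball), and nothing prevents it from crossing other strands of the multicurve -- other components, or other portions of the same component -- that pass through the same ball. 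A transversal crossing expels the multicurve from the closure of the space of topological boundaries, so the modified object can no longer be compared with $\inf\SSS_e|_{\Mult_e}$, and the subsequent Arzel\`a--Ascoli argument no longer produces a limit in $\Mult_e$. This is precisely the point where the surface-specific intersection analysis is unavoidable: one must either resolve the newly created crossings into tangencies (the action-preserving cut-and-paste used in the proof of Lemma~\ref{l:minima_are_negative}, which keeps the components concatenations of minimizing segments) and then extract, via Lemma~\ref{l:boundaries}, a topological boundary of at most the same action, or run the two-step minimization of \cite[Lemmas~2.7--2.8]{Asselle:2016qv} in which the constrained minimizer is first shown to exist and then shown to be embedded. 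Without some such argument the uniform bound on the number of break points, and hence the compactness of your minimizing sequence, is not established.

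Two smaller points. First, the positivity of the action of short components does not follow from ``the systolic and isoperimetric inequalities together with the estimate above'' as stated: the global Stokes bound involves $\int_M|\diff\eta|$ and says nothing about a single short loop. What you need is that any loop contained in a ball of radius $\rho_\inj$ has positive $\SSS_e$-action (this is \cite[Lemma~2.3 and Cor.~2.5]{Asselle:2016qv}, used verbatim in the paper's critical-case proof); your isoperimetric idea can be made to work for an embedded contractible component parametrized with energy $e$, but it must be carried out with the enclosed area, not with the global bound. Second, the extension of the Stokes estimate from $\tb(m)$ to $\overline{\tb(m)}$ is not a two-line continuity remark in the absolutely continuous topology (the action is only lower semicontinuous there); it is exactly the content of \cite[Lemma~2.9]{Asselle:2016qv}, i.e.\ the bound~\eqref{e:period_bound}, which you may simply quote.
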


Lemma~\ref{l:negative_implies_waist} is always applicable on subcritical energy levels, according to the following statement.

\begin{lem}\label{l:minima_are_negative}
For each $e\in(e_0(L),c_0(L))$,  $\SSS_e$ attains negative values on $\Mult_e$.
\end{lem}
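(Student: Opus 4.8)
The plan is to exhibit, for each $e\in(e_0(L),c_0(L))$, a single topological boundary $\ggamma\in\Mult_e$ with $\SSS_e(\ggamma)<0$, since the definition of $c_0(L)$ as the infimal energy for which $\SSS_e\geq 0$ on periodic curves is the natural source of negativity. First I would recall that $e<c_0(L)$ means there exists an absolutely continuous periodic curve (or, more precisely since $c_0$ refers to the universal abelian cover, a periodic curve $\sigma:\R/p\Z\to M$ that is null-homologous in $M$, i.e.\ $[\sigma]=0\in\Hom_1(M;\Z)$) with $\SSS_e(\sigma)<0$. The key point is that null-homology of $\sigma$ is exactly what lets us invoke Lemma~\ref{l:boundaries}: after a small perturbation making $\sigma$ a finite concatenation of short unique local free-time action minimizers and having only transverse self-intersections (this only decreases the action), we can replace $\sigma$ by an \emph{embedded} multicurve with the same homology class and no larger action by the standard surgery at self-intersection points — cutting and reconnecting at a transverse double point of a Lagrangian-type curve does not increase the free-period action, a fact already used in \cite{Asselle:2016qv}. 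The resulting embedded multicurve is a homological boundary with negative action, hence by Lemma~\ref{l:boundaries} it is a disjoint union of topological boundaries, and at least one of these topological boundaries must itself have negative action. Since its components can be taken to be concatenations of unique local free-time minimizers, this topological boundary lies in $\Mult_e$, which gives the claim.

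More carefully, I would proceed in the following order. (1) Fix $e\in(e_0(L),c_0(L))$ and choose a periodic curve $\sigma$ that is null-homologous in $M$ with $\SSS_e(\sigma)<0$; the existence of such a curve is precisely the content of $e<c_0(L)$, possibly after noting that one may pass to a finite cover and push down, or simply arguing that the abelian-cover Mañé value being $>e$ forces negativity on some integer class that can be realized by a closed null-homologous curve upstairs projecting to $M$. (2) Approximate $\sigma$ in the absolutely continuous topology by a curve $\sigma'$ that is piecewise a unique local free-time action minimizer and whose action is still negative; this uses the injectivity-radius construction recalled just before the lemma, exactly as in \cite{Asselle:2016qv}. (3) Make the self-intersections of $\sigma'$ transverse and finite in number, then perform the standard resolution-of-crossings surgery: at each transverse double point, reconnect the four local branches in the orientation-compatible way, which replaces $\sigma'$ by an embedded multicurve $\ggamma_0$ in the same homology class with $\SSS_e(\ggamma_0)\leq\SSS_e(\sigma')<0$ (the surgery does not increase the action because near the crossing one is comparing two ways of joining the same four endpoints by short minimizing arcs). (4) Since $[\ggamma_0]=[\sigma]=0$, Lemma~\ref{l:boundaries} writes $\ggamma_0=\tb_1\sqcup\cdots\sqcup\tb_r$ as a disjoint union of topological boundaries; as $\SSS_e(\ggamma_0)=\sum_j\SSS_e(\tb_j)<0$, some $\tb_j$ has $\SSS_e(\tb_j)<0$. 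That $\tb_j\in\overline{\tb(m)}\cap\ELMult_e(m)=\Mult_e$ (for $m$ its number of components) is immediate from the construction, so $\SSS_e$ attains a negative value on $\Mult_e$.

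The main obstacle I anticipate is step (1): being careful about which cover the hypothesis $e<c_0(L)$ actually refers to, and extracting from it a \emph{null-homologous} (not merely contractible-in-some-cover) periodic curve in $M$ with negative $\SSS_e$. By definition $c_0(L)$ is the Mañé critical value of the lift to the universal abelian cover $\widehat M$, so $e<c_0(L)$ gives a closed curve in $\widehat M$ with negative energy-$e$ action; such a curve, projected to $M$, is a closed curve whose homology class lies in the kernel of $\Hom_1(M;\Z)\to\Hom_1(M;\Z)/(\text{torsion})$ — i.e.\ it is torsion in $\Hom_1(M;\Z)$ — but for oriented \emph{surfaces} $\Hom_1(M;\Z)$ is free, so torsion is trivial and the projected curve is genuinely null-homologous. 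One then has to concatenate finitely many loops in $\widehat M$ and project, arranging negative total action by taking a suitable multiple, which is routine. The surgery in step (3) is also a place where one must be slightly careful that the reconnection can be chosen compatibly with the global orientation so that the output is an oriented embedded multicurve representing the same class; this is the classical fact that resolving all crossings of an oriented curve on a surface in the orientation-respecting way yields an embedded multicurve homologous to the original, and it does not increase the (Tonelli, free-period) action by the local minimality of the short connecting arcs.
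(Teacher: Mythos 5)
Your proposal is correct and follows essentially the same route as the paper: take a null-homologous curve with $\SSS_e<0$ (available since $e<c_0(L)$), replace it by a broken curve of unique local free-time action minimizers with finitely many transverse double points, resolve the crossings in the orientation-compatible way to obtain a homological boundary of negative action, and then use Lemma~\ref{l:boundaries} plus additivity of the action to extract a single topological boundary in $\Mult_e$ with negative action. The only point you gloss over is that the orientation-compatible reconnection by itself produces tangential self-intersections rather than an embedded multicurve, so before invoking Lemma~\ref{l:boundaries} (stated for embedded multicurves) one must chamfer the corners by a small perturbation keeping the action negative and the homology class zero --- this is exactly the step in Figure~\ref{f:cut_and_paste} of the paper and is implicit in your ``short connecting arcs''.
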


\begin{proof}
Since $e\in(e_0(L),c_0(L))$, there exists an absolutely continuous and null-homologous curve $\zeta:\R/s\Z\to  M$ such that $\SSS_e(\zeta)<0$. Let $k\in\N$ be large enough so that $d(\zeta(\tfrac{i}{k}s),\zeta(\tfrac{i+1}{k}s))<\rho_\inj$ for all $i=0,...,k-1$. We denote by $\eta:\R/r\Z\to M$ the piecewise smooth curve such that, for some $0=r_{0}<...<r_{k-1}=r$, each restriction $\eta|_{[r_i,r_{i+1}]}$ is the unique local free-time action minimizers with energy $e$ joining $\eta(r_i)=\zeta(\tfrac{i}{k}s)$ and $\eta(r_{i+1})=\zeta(\tfrac{i+1}{k}s)$. Clearly $\SSS_e(\eta)\leq\SSS_e(\zeta)<0$ and, up to choosing $k$ large enough, the periodic curve $\eta$ is freely homotopic to $\zeta$. In particular, $\eta$ is null-homologous. Moreover, up to a generic perturbation of the vertices $\eta(r_{i})$, we can assume that $\eta$ has only finitely many self-intersections, all of which are double points.

\begin{figure}
\begin{center}
\begin{small}
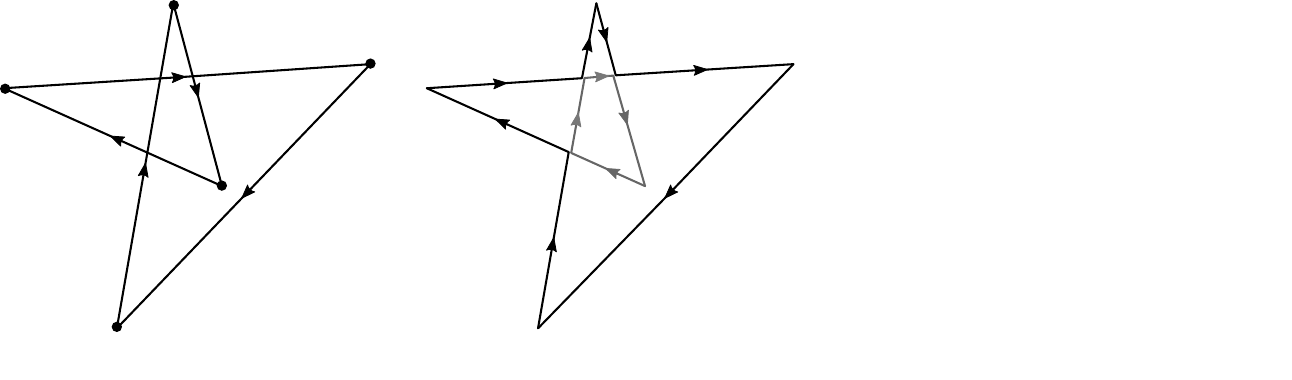 
\caption{\textbf{(a)}~A curve $\eta$ with three double points.
\textbf{(b)}~The rearrargement of $\eta$ as a multicurve $\ttheta=(\ttheta_1,\ttheta_2)$ whose only self-intersections are tangencies. \textbf{(c)}~The embedded multicurve $\ggamma=(\gamma_1,\gamma_2)$ obtained by chamfering the corners of $\ttheta$.}
\vspace{-3mm}
\label{f:cut_and_paste}
\end{small}
\end{center}
\end{figure}

We now produce a multicurve $\ttheta=(\theta_1,...,\theta_m)\in\ELMult_e(m)$, for some $m\leq n$, with the following properties: the total support of $\ttheta$ coincide with the support of $\eta$, its action is $\SSS_e(\ttheta)=\SSS_e(\eta)$, and the only self intersections of $\ttheta$ are tangencies. Such a multicurve $\ttheta$ is obtained starting from $\eta$, by transforming any crossing at a double point into a tangency in the unique way compatible with the orientation, as in the example in Figure~\ref{f:cut_and_paste}(a-b).

Since $\ttheta$ is obtained by rearranging $\eta$, its homology class is $[\ttheta]=[\eta]=0$ in $\Hom_1(M;\Z)$, that is, $\ttheta$ is a homological boundary. By chamfering the corners of $\ttheta$ (Figure~\ref{f:cut_and_paste}(c)), we can find an embedded multicurve $\ggamma\in\ELMult_e(m)$ that is arbitrarily close to $\ttheta$. In particular, we choose $\ggamma$ to be sufficiently close to $\ttheta$ so that $[\ggamma]=[\ttheta]=0$ in $\Hom_1(M;\Z)$ and $\SSS_e(\ggamma)<0$. By Lemma~\ref{l:boundaries}, the multicurve $\ggamma$ can be written as a finite union $\ggamma=\ggamma_1\cup...\cup\ggamma_h$, where each $\ggamma_j$ is a topological boundary. In particular, each $\ggamma_j$ belongs to $\Mult_e$. Since 
\begin{align*}
\SSS_e(\ggamma_1) + \ldots + \SSS_e(\ggamma_h) = \SSS_e(\ggamma) < 0,
\end{align*}
at least one such $\ggamma_j$ must satisfy $\SSS_e(\ggamma_j)<0$.
\end{proof}

\begin{lem}\label{l:minimal_topological_boundaries}
For each $e>e_0(L)$, the components of minimal boundaries with energy $e$ are (simple) Tonelli waists with energy $e$.
\end{lem}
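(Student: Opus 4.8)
The plan is to reduce the statement to the already-established Lemma~\ref{l:minima_are_embedded} by showing that a minimal boundary is, in particular, a minimizer of $\SSS_e$ over the larger space $\Mult_e$. First I would fix $e>e_0(L)$ and let $\ggamma=(\gamma_1,\dots,\gamma_m)$ be a minimal boundary with energy $e$, so that $\SSS_e(\ggamma)=\inf_{\tb}\SSS_e$. The first step is to observe that $\inf_{\tb}\SSS_e=\inf_{\Mult_e}\SSS_e$. One inclusion is clear once we know a minimizer over $\Mult_e$ exists and is embedded with Tonelli-waist components (Lemmas~\ref{l:minima_are_embedded} and~\ref{l:negative_implies_waist}), since embedded multicurves that are homological boundaries decompose into topological boundaries (Lemma~\ref{l:boundaries}), and conversely every topological boundary lies in $\Mult_e$ after replacing each arc between consecutive points of a fine subdivision by the unique local free-time action minimizer with the same endpoints, which does not increase the action. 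The other inclusion, $\inf_{\Mult_e}\SSS_e\le\inf_{\tb}\SSS_e$, is immediate from $\tb\subset$ (closure of) $\Mult_e$ up to the same subdivision argument together with the density of $\tb(m)$ in $\overline{\tb(m)}$ and continuity of $\SSS_e$ under the relevant approximations.

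The second step splits according to the sign of $\inf_{\tb}\SSS_e$. If $\inf_{\tb}\SSS_e<0$, then by the previous step $\SSS_e$ attains negative values on $\Mult_e$, so Lemma~\ref{l:negative_implies_waist} provides a minimizer over $\Mult_e$; since $\ggamma$ realizes the same infimum value, $\ggamma$ itself is a minimizer of $\SSS_e|_{\Mult_e}$, and Lemma~\ref{l:minima_are_embedded} then tells us $\ggamma$ is embedded with Tonelli-waist components — in particular its components are simple. If instead $\inf_{\tb}\SSS_e\ge 0$, one argues directly: each component $\gamma_i$ of $\ggamma$ is a closed curve of positive period (a constant curve would have $\SSS_e(\text{pt})=pe>0$ contributions that could be removed, or more precisely a topological boundary cannot have a constant component), and local minimality of $\gamma_i$ for $\SSS_e$ among nearby curves follows because any competitor supported near $\gamma_i$ together with the other components $\gamma_j$ remains a topological boundary and hence has action $\ge\SSS_e(\ggamma)$; comparing gives that $\gamma_i$ locally minimizes $\SSS_e$, i.e.\ it is a Tonelli waist. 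Simplicity then follows from the injectivity-radius structure: a self-intersection of $\gamma_i$ could be removed by the cut-and-paste/chamfering operation of the proof of Lemma~\ref{l:minima_are_negative}, producing a strictly shorter competitor that is still a topological boundary, contradicting minimality — unless the action is exactly $0$ and the cut produces a constant, which again is excluded for topological boundaries.

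The main obstacle I anticipate is the borderline case $\inf_{\tb}\SSS_e=0$, where Lemma~\ref{l:negative_implies_waist} does not apply and one cannot simply invoke Lemma~\ref{l:minima_are_embedded}: here the existence of the minimal boundary $\ggamma$ is a hypothesis, and one must argue embeddedness and the waist property by hand, repeating in miniature the cut-and-paste argument of Lemma~\ref{l:minima_are_negative} while carefully checking that no operation lowers the action below $\inf_{\tb}\SSS_e$ and that the result stays a topological (not merely homological) boundary — which is exactly where Lemma~\ref{l:boundaries} is needed again. A second, more routine technical point is verifying that the local-minimizer replacement and the chamfering are compatible with staying in $\overline{\tb(m)}\cap\ELMult_e(m)$ and with the continuity of $\SSS_e$; this is handled as in~\cite{Asselle:2016qv} and I would cite it rather than redo it.
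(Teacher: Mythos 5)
Your reduction to Lemma~\ref{l:minima_are_embedded} has a genuine gap at its key step: from $\SSS_e(\ggamma)=\inf_{\tb}\SSS_e=\inf_{\Mult_e}\SSS_e$ you conclude that ``$\ggamma$ itself is a minimizer of $\SSS_e|_{\Mult_e}$'', but this requires $\ggamma\in\Mult_e$, i.e.\ that each component of the minimal boundary is a concatenation of unique local free-time action minimizers. A minimal boundary is a priori only an absolutely continuous topological boundary realizing $\inf_{\tb}\SSS_e$; your subdivision argument produces a \emph{different} multicurve (the one obtained after replacing the arcs), not $\ggamma$ itself, and you never check that the replacement stays in $\tb$ (or even in $\overline{\tb(m)}$): the inserted local minimizer could a priori cross other portions of the multicurve and destroy embeddedness. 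Establishing exactly this membership is the first half of the paper's proof: for a short arc $\gamma_i|_{[t_0,t_0+n^{-1}]}$ one takes the unique local free-time minimizer $\zeta_n$ with the same endpoints, uses $\tau_n\to0$ and $\dot\zeta_n(0)\to\dot\gamma_i(t_0)$ to verify that for $n$ large $\zeta_n$ meets $\ggamma$ only along that arc (so the replaced multicurve is again a topological boundary), and then minimality of $\ggamma$ over $\tb$ forces the arc to coincide with $\zeta_n$; hence $\ggamma\in\Mult_e\cap\tb$ and its components are smooth periodic orbits with energy $e$. Without this step the appeal to Lemma~\ref{l:minima_are_embedded} is unfounded; note also that the statement must hold for every $e>e_0(L)$, whereas Lemma~\ref{l:negative_implies_waist} only enters when the infimum is negative, so the paper's direct argument avoids your case distinction on the sign of $\inf_{\tb}\SSS_e$ altogether.

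Your fallback argument in the case $\inf_{\tb}\SSS_e\geq 0$ also does not work as stated: a competitor close to $\gamma_i$ in the topology relevant for the definition of Tonelli waist need not be simple nor disjoint from the other components, so ``together with the other components it remains a topological boundary'' is false in general; and the cut-and-paste/chamfering of a self-intersection is essentially an action-preserving rearrangement, not a strictly action-decreasing operation, so it does not by itself contradict minimality. The paper's order of argument is the correct one: first show each component is a smooth \emph{simple} periodic orbit (via the local replacement above and the argument of \cite[Lemma~2.7]{Asselle:2016qv}); then, if some $\gamma_i$ were not a waist, pick a competitor $\gamma_i''$ with strictly smaller action that is $C^1$-close to $\gamma_i$ --- $C^1$-closeness to a simple curve disjoint from the other components guarantees that $\gamma_i''$ is simple and disjoint from them --- so the modified multicurve is still a topological boundary with action strictly below $\inf_{\tb}\SSS_e$, a contradiction.
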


\begin{proof}
Let $\gamma_i$ be a component of a minimal boundary $\ggamma$ with energy $e$, and $t_0\in\R$ be such that $\gamma_i$ is smooth at $t_0$. For all sufficiently large $n\in\N$, we have that $d(\gamma_i(t_0),\gamma_i(t_0+n^{-1}))<\rho_\inj$. We denote by $\zeta_n:[0,\tau_n]\to M$ the unique local free-time action minimizer with energy $e$ such that $\zeta_n(0)=\gamma_i(t_0)$ and $\zeta_n(\tau_n)=\gamma_i(t_0+n^{-1})$. As $n\to\infty$, we have that $\tau_n\to 0$ and $\dot\zeta_n(0)\to\dot\gamma_i(t_0)$. This readily implies that, for all $n$ large enough, $\zeta_n$ intersects the embedded multicurve $\ggamma$ only in $\gamma_i([t_0,t_0+n^{-1}])$. Fix now such a large enough $n\in\N$. We denote by $\gamma_i'$ the curve $\gamma_i$ with the portion $\gamma_i|_{[t_0,t_0+n^{-1}]}$ replaced by $\zeta_n$, and by $\ggamma'$ the multicurve $\ggamma$ with the component $\gamma_i$ replaced by $\gamma_i'$. Clearly, $\SSS_e(\ggamma')\leq\SSS_e(\ggamma)$, and this inequality is not strict if and only if the curves $\gamma_i|_{[t_0,t_0+n^{-1}]}$ and $\zeta_n$ coincide. Since $\ggamma$ is a minimal boundary with energy $e$ and $\ggamma'\in\tb$, we infer that $\ggamma\in\Mult_e\cap\tb$, that is, each component of $\ggamma$ is a piecewise smooth solution of the Euler-Lagrange equation with energy $e$. Now, arguing as in the proof of \cite[Lemma~2.7]{Asselle:2016qv}, we infer that each component $\gamma_i$ of $\ggamma$ is a simple periodic orbit with energy $e$. Assume by contradiction that one such $\gamma_i$ is not a Tonelli waist. Therefore, there exists a piecewise smooth $\gamma_i''$ that is arbitrarily $C^1$-close to $\gamma_i$ and satisfies $\SSS_e(\gamma_i')<\SSS_e(\gamma_i)$. We fix one such $\gamma_i''$ that is $C^1$-close enough to $\gamma_i$ so that it is a simple curve that does not intersect any other component of $\ggamma$. By replacing the component $\gamma_i$ of $\ggamma$ with $\gamma_i''$, we  thus obtain a multicurve $\ggamma''$ that is still a topological boundary and satisfies $\SSS_e(\ggamma'')<\SSS_e(\ggamma)$. This contradicts the fact that $\ggamma$ is a minimal boundary.
\end{proof}

\begin{proof}[Proof of Theorem~\ref{t:below_Mane} for $e<c_0(L)$]
Lemmas~\ref{l:negative_implies_waist} and~\ref{l:minima_are_negative} imply that there exists a multicurve $\ggamma\in\Mult_e$ such that $\SSS_e(\ggamma)=\inf\SSS_e|_{\Mult_e}<0$. Lemma~\ref{l:minima_are_embedded} further implies that $\ggamma$ is embedded (thus a topological boundary), and its components are Tonelli waists with energy $e$. Finally, Lemma~\ref{l:minimal_topological_boundaries} implies that minimal boundaries with energy $e$ belongs to $\Mult_e$, and in particular
\[  \inf_{\tb}\SSS_e = \inf_{\Mult_e}\SSS_e = \SSS_e(\ggamma). \]
Therefore, $\ggamma$ is a minimal boundary with energy $e$.
\end{proof}


\subsection{Critical minimal boundaries}\label{s:Mane}
The very definition of the Ma\~n\'e critical value $c_0(L)$ implies that $\SSS_{c_0(L)}$ is non-negative over the space of null-homologous absolutely continuous periodic curves in $M$. This actually implies the following property, which holds for general closed manifolds $M$.

\begin{lem}\label{l:actioncrit}
Let $M$ be a closed manifold, and $L:\Tan M\to\R$ a Tonelli Lagrangian. The associated action functional $\SSS_{c_0(L)}$ is non-negative over the space of homological boundaries of $M$.
\end{lem}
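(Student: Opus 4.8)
The plan is to reduce the statement about homological boundaries to the defining property of $c_0(L)$, which concerns null-homologous periodic \emph{curves}, by unwinding the definition of the extended functional $\SSS_{c_0(L)}$ on multicurves together with the structural Lemma~\ref{l:boundaries}. First I would recall that, by definition of the Ma\~n\'e critical value of the universal abelian cover, $\SSS_{c_0(L)}(\zeta)\geq 0$ for every absolutely continuous periodic curve $\zeta:\R/p\Z\to M$ that is null-homologous, i.e.\ $[\zeta]=0$ in $\Hom_1(M;\Z)$; equivalently, lifting to the universal abelian cover $M'$, every closed curve upstairs has nonnegative $\SSS_{c_0(L)}$-action, and closed curves in $M'$ correspond precisely to null-homologous closed curves in $M$. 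This is the only dynamical input; everything else is bookkeeping about multicurves.

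Next, let $\ggamma=(\gamma_1,\dots,\gamma_m)$ be an (embedded) homological boundary. By Lemma~\ref{l:boundaries}, $\ggamma$ decomposes as a disjoint union $\ggamma=\ggamma_1\cup\dots\cup\ggamma_h$ of topological boundaries $\ggamma_j$, and since $\SSS_{c_0(L)}$ on multicurves is additive over components we have $\SSS_{c_0(L)}(\ggamma)=\sum_{j=1}^h\SSS_{c_0(L)}(\ggamma_j)$; hence it suffices to treat a single \emph{topological} boundary. So assume $\ggamma=(\gamma_1,\dots,\gamma_m)$ is the oriented boundary of an open set $\Sigma\subset M$. The key observation is that one can splice the components of $\ggamma$ into a single null-homologous periodic curve without changing the action: concatenating $\gamma_1,\dots,\gamma_m$ (joining consecutive endpoints by short back-and-forth arcs that contribute zero action, or more cleanly, noting that the disjoint union of the $\gamma_i$ already represents the $1$-cycle $\partial[\Sigma]=0$ and can be realized by a single closed curve after connecting the components along arcs traversed twice with opposite orientations) yields a null-homologous loop $\zeta$ with $\SSS_{c_0(L)}(\zeta)=\sum_i\SSS_{c_0(L)}(\gamma_i)=\SSS_{c_0(L)}(\ggamma)$. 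Applying the defining inequality to $\zeta$ gives $\SSS_{c_0(L)}(\ggamma)=\SSS_{c_0(L)}(\zeta)\geq 0$, and summing over the $\ggamma_j$ finishes the proof.

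A cleaner route, avoiding the ad hoc splicing, is to work directly on the universal abelian cover $M'$: since $\ggamma$ is a homological boundary, each $[\gamma_i]=0$ in $\Hom_1(M;\Z)$ would \emph{not} follow, so instead lift the $1$-cycle $\gamma_1+\dots+\gamma_m$, which is null-homologous, to a $1$-cycle on $M'$; realize this cycle as a union of closed curves $\tilde\gamma_1',\dots,\tilde\gamma_{m'}'$ on $M'$ with $\sum_j\SSS_{c_0(L)}(\tilde\gamma_j')=\SSS_{c_0(L)}(\ggamma)$ (using that the lifted Lagrangian has the same action on closed curves that project to loops homologous to the original ones, and that a null-homologous cycle downstairs lifts to an actual cycle that is a sum of \emph{closed} curves upstairs). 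By definition of $c_0(L)$ as the Ma\~n\'e critical value of the lifted Lagrangian on $M'$, each $\SSS_{c_0(L)}(\tilde\gamma_j')\geq 0$, hence $\SSS_{c_0(L)}(\ggamma)\geq 0$.

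The main obstacle — really the only nontrivial point — is making precise the passage from ``a null-homologous $1$-cycle built from the $\gamma_i$'' to ``a sum of genuine closed curves (or a single closed curve) with the same action,'' i.e.\ checking that the combinatorial regrouping of the $\gamma_i$'s into closed loops lifting to $M'$ does not create open arcs or change the action. This is where one uses that $\partial[\Sigma]=\gamma_1+\dots+\gamma_m$ as $1$-cycles and that any null-homologous integral $1$-cycle represented by pairwise disjoint loops lifts to a $1$-cycle on the abelian cover which, being a cycle on a surface, is again representable by closed curves; the actions match because the covering projection is a local isometry for $L$ and the lifted curves project exactly onto the original $\gamma_i$ (traversed with multiplicity $1$). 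Once this is granted, the inequality is immediate from the definition of $c_0(L)$.
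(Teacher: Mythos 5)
Your reduction to the defining property of $c_0(L)$ is the right starting point, but both of your routes founder on the same step, and it is precisely the step you flag as ``the only nontrivial point''. The connecting arcs traversed back and forth do \emph{not} contribute zero action: for a path $\zeta_i$ of duration $\sigma$, convexity of $L$ in $v$ gives $\SSS_{c_0(L)}(\zeta_i*\overline\zeta_i)\geq 2\sigma\,(c_0(L)-e_0(L))\geq 0$, and in general this quantity is strictly positive and bounded away from zero when the components of $\ggamma$ have distant base points (for a magnetic Lagrangian it is at least $2\sqrt{2e}\cdot\mathrm{length}(\zeta_i)$, since the $\theta$-terms cancel). So splicing gives only $0\leq \SSS_{c_0(L)}(\zeta)=\SSS_{c_0(L)}(\ggamma)+C$ with a fixed $C\geq 0$, i.e.\ $\SSS_{c_0(L)}(\ggamma)\geq -C$, which is not the claim. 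The ``cleaner route'' through the universal abelian cover does not repair this: a component $\gamma_i$ with $[\gamma_i]\neq 0$ in $\Hom_1(M;\Z)$ lifts to an open arc, not a loop, and no union of closed curves upstairs can project with multiplicity one onto the disjoint components of $\ggamma$ --- any closed curve in the abelian cover projects to a \emph{null-homologous} loop in $M$, so if the $\gamma_i$ are disjoint and individually non-null-homologous, such a realization simply does not exist. Lifting a chain $w$ with $\partial w=\gamma_1+\ldots+\gamma_m$ produces a cycle upstairs only at the level of singular chains (with possible cancellations), which carries no action identity for parametrized curves. Note also that your preliminary reduction via Lemma~\ref{l:boundaries} is not available here: that lemma concerns embedded multicurves on oriented closed surfaces, whereas the present statement is for an arbitrary closed manifold and arbitrary (not necessarily embedded) multicurves with $[\ggamma]=0$; fortunately this reduction is also unnecessary.

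The missing idea is an iteration trick that makes the arc cost negligible instead of trying to make it zero. Arguing by contradiction, suppose $\SSS_{c_0(L)}(\ggamma)<0$. Choose arbitrary paths $\zeta_i$ joining $\gamma_i(0)$ to $\gamma_{i+1}(0)$ and, for $n\in\N$, form the single loop
\begin{align*}
\xi_n:=\gamma_1^n *\zeta_1* \gamma_2^n*\ldots*\zeta_{m-1}* \gamma_m^n*\overline\zeta_{m-1}*\ldots* \overline\zeta_1 ,
\end{align*}
which is null-homologous because $[\xi_n]=n[\ggamma]=0$, and whose action is $\SSS_{c_0(L)}(\xi_n)=n\,\SSS_{c_0(L)}(\ggamma)+\sum_i\SSS_{c_0(L)}(\zeta_i*\overline\zeta_i)$. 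For $n$ large this is negative, contradicting the non-negativity of $\SSS_{c_0(L)}$ on null-homologous loops. With this modification your argument becomes the paper's proof; without it, the action bookkeeping you propose cannot be made exact.
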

\begin{proof}
Let us assume by contradiction that there exists a homological boundary $\ggamma=(\gamma_1,...,\gamma_m)$ with $\SSS_{c_0(L)}(\ggamma)<0$. For each $i=1,...,m-1$, we choose an absolutely continuous path $\zeta_i:[0,1]\to M$ such that $\zeta_i(0)=\gamma_i(0)$ and $\zeta_i(1)=\gamma_{i+1}(0)$. For each $n\in\N$, we define the loop
\[
\xi_n:=\gamma_1^n *\zeta_1* \gamma_2^n*\ldots*\zeta_{m-1}* \gamma_m^n*\overline\zeta_{m-1}*\overline\zeta_{m-2}*\ldots* \overline\zeta_1,
\]
where $\overline\zeta_i:[0,1]\to M$ denotes the reversed path $\overline\zeta_i(t)=\zeta_i(1-t)$ joining $\gamma_{i+1}(0)$ and $\gamma_{i}(0)$, $*$ denotes concatenation of paths, and the superscript $n$ denotes the $n$-th iteration of a loop. Notice that $\xi_n$ is null-homologous, for
\begin{align*}
[\xi_n]
&=
[\gamma_1^n] + \ldots+[\gamma_m^n]+\underbrace{[\zeta_{1}*\overline\zeta_{1}]}_{=0}+\ldots+\underbrace{[\zeta_{m-1}*\overline\zeta_{m-1}]}_{=0}
=n [\ggamma]=0.
\end{align*}
However, its action
\[
\SSS_{c_0(L)}(\xi_n)=n\, \SSS_{c_0(L)}(\ggamma)+\sum_{i=1}^{m-1}\SSS_{c_0(L)}(\zeta_i*\overline
\zeta_i),
\] 
is negative for $n$ large enough, contradicting the definition of $c_0(L)$.
\end{proof}

\begin{proof}[Proof of Theorem~\ref{t:below_Mane} for $e=c_0(L)$]
Fix an energy value $e_1\in(e_0(L),c_0(L))$, and consider the Tonelli injectivity radius $\rho_\inj=\rho_\inj(e_1)>0$ introduced in Section~\ref{s:below_Mane}. Its properties readily imply that every absolutely continuous periodic curve $\gamma:\R/p\Z\to M$, with $p>0$, that is entirely contained in a closed Riemannian ball of radius $\rho_\inj$ satisfies $\SSS_{e_1}(\gamma)>0$, and thus also $\SSS_{e}(\gamma)=\SSS_{e_1}(\gamma)+(e-e_1)p>0$ for all $e>e_1$. Here, $\rho_\inj$ is small enough so that all closed Riemannian balls of radius $\rho_\inj$ are contractible.

By \cite[Lemma~2.9]{Asselle:2016qv}, any piecewise smooth multicurve $\ggamma=(\gamma_1,...,\gamma_m)\in\overline{\tb(m)}$ with components of the form $\gamma_i:\R/p_i\Z\to M$ satisfies
\begin{align}\label{e:period_bound}
\sum_{i=1}^m p_i \leq \frac{\SSS_e(\ggamma)+\int_M |\diff\theta|}{e-e_0(L)},
\end{align}
where $\theta$ is the 1-form on $M$ given by $\theta_q(v)=\partial_vL(q,0)v$.

For each $e\in(e_1,c_0(L))$, let $\ggamma_e=(\gamma_{e,1},...,\gamma_{e,m_e})\in\Mult_e$ be the minimal boundary given by Theorem~\ref{t:below_Mane}, so that 
\begin{align}\label{e:inf}
 \SSS_e(\ggamma_e) = \inf_{\tb}\SSS_e = \inf_{\Mult_e}\SSS_e < 0.
\end{align}
We claim that no component of $\ggamma_e$ is contained in a Riemannian ball of radius $\rho_\inj$. Otherwise, after removing all such components $\gamma_{e,i}$ (which have  action $\SSS_e(\gamma_{e,i})>0$), we would be left with a multicurve $\ggamma'\subseteq\ggamma_e$ that is still a topological boundary, but satisfies $\SSS_e(\ggamma')<\SSS_e(\ggamma_e)$, contradicting~\eqref{e:inf}. In particular, every component $\gamma_{e,i}:\R/p_{e,i}\Z\to M$ has length at least $2\rho_\inj$ and period bounded from below as
\begin{align*}
p_{e,i} \geq p_{\min}:= 2\rho_\inj \min\big\{ |v|_q\ \big|\ E(q,v)\geq e_1 \big\}.
\end{align*}
By~\eqref{e:period_bound}, the total period of the multicurve $\ggamma_e$ can be bounded from above as
\begin{align*}
\sum_{i=1}^{m_e} p_{e,i} < p_{\max}:=\frac{\int_M|\diff\theta|}{e_1-e_0(L)}.
\end{align*}
Notice that $p_{\max}$ and $p_{\min}$ are independent of $e\in(e_1,c_0(L))$. 
The total length of the multicurve $\ggamma_e$ can be uniformly bounded as
\begin{align*}
 \sum_{i=1}^{m_e} \mathrm{length}(\gamma_{e,i}) <  p_{\max} \max\big\{ |v|_q\ \big|\ E(q,v)\leq c_0(L) \big\}.
\end{align*}
Therefore, the number $m_e$ of connected components of $\ggamma_e$ is uniformy bounded as
\begin{align*}
1\leq m_e\leq  \frac{p_{\max}}{2\rho_\inj} \max\big\{ |v|_q\ \big|\ E(q,v)\leq c_0(L) \big\}. 
\end{align*}
This, together with the pigeonhole principle, implies that there exists $m\in\N$ and  a monotone increasing sequence $e_\alpha\to c_0(L)$ such that each multicurve $\ggamma_{e_\alpha}$ has $m$ connected components. Since the lengths and the periods of the connected components of the $\ggamma_{e_\alpha}$'s are uniformly bounded from above and uniformly bounded away from zero, up to extracting a subsequence we have that $\ggamma_{e_\alpha}$ converges in the $C^\infty$-topology to some multicurve $\ggamma\in\Mult_{c_0(L)}$ as $\alpha\to\infty$. The action of $\ggamma$ satisfies
\begin{align*}
\SSS_{c_0(L)}(\ggamma) = \lim_{\alpha\to\infty} \SSS_{e_\alpha}(\ggamma_{e_\alpha})\leq0.
\end{align*}
Since $\ggamma$ is a homological boundary, Lemma \ref{l:actioncrit} implies that $\ggamma$ is a global minimizer of $\SSS_{c_0(L)}$ on the space of homological boundaries and has action $\SSS_{c_0(L)}(\ggamma)=0$. In particular, $\ggamma$ is a global minimizer of $\SSS_{c_0(L)}|_{\Mult_{c_0(L)}}$, and Lemma~\ref{l:minima_are_embedded} implies that $\ggamma$ is embedded, and thus a minimal boundary.
\end{proof}

Unlike on subcritical energies, minimal boundaries with energy $c_0(L)$ have the following decomposition property.

\begin{cor}\label{c:irreducible_minimal}
Let $M$ be an oriented closed surface of genus $g$, and $L:\Tan M\to\R$ a Tonelli Lagrangian with $e_0(L)<c_0(L)$. Every minimal boundary with energy $c_0(L)$ can be decomposed as the disjoint union of irreducible minimal boundaries with energy $c_0(L)$. In particular, there exist irreducible minimal boundaries with energy $c_0(L)$, which have at most $g+1$ components.
\end{cor}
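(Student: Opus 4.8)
The plan is to combine the case $e=c_0(L)$ of Theorem~\ref{t:below_Mane} with the purely combinatorial Lemmas~\ref{l:boundaries} and~\ref{l:irreducible} and with the non-negativity statement of Lemma~\ref{l:actioncrit}. Let $\ggamma$ be a minimal boundary with energy $c_0(L)$; by Theorem~\ref{t:below_Mane} we have $\SSS_{c_0(L)}(\ggamma)=\inf_{\tb}\SSS_{c_0(L)}=0$. First I would decompose $\ggamma$ into irreducible pieces at the purely topological level: as long as an embedded homological boundary admits a splitting into two non-empty homological boundaries, perform such a splitting; since each splitting strictly decreases the number of components, the process terminates after finitely many steps and produces a decomposition $\ggamma=\ggamma_1\cup\ldots\cup\ggamma_h$ into non-empty irreducible homological boundaries. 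As noted right after Lemma~\ref{l:boundaries}, each $\ggamma_j$ is then automatically a topological boundary.

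Next I would check that every $\ggamma_j$ is itself a minimal boundary with energy $c_0(L)$. Each $\ggamma_j$ is a homological boundary, so Lemma~\ref{l:actioncrit} gives $\SSS_{c_0(L)}(\ggamma_j)\geq 0$. On the other hand, additivity of the action over the components of a multicurve yields $\sum_{j=1}^{h}\SSS_{c_0(L)}(\ggamma_j)=\SSS_{c_0(L)}(\ggamma)=0$, and therefore $\SSS_{c_0(L)}(\ggamma_j)=0=\inf_{\tb}\SSS_{c_0(L)}$ for every $j$. Since each $\ggamma_j$ lies in $\tb$, this means precisely that $\ggamma_j$ is a minimal boundary with energy $c_0(L)$, which proves the first assertion of the corollary.

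For the ``in particular'' part, I would use that the minimal boundary produced by Theorem~\ref{t:below_Mane} at energy $c_0(L)$ is non-empty: it arises as a $C^\infty$-limit of minimal boundaries $\ggamma_{e_\alpha}$ each of which has at least one component of length $\geq 2\rho_\inj>0$, so its limit still has a component of positive length. Applying the decomposition above to such a $\ggamma$ then yields at least one non-empty irreducible minimal boundary with energy $c_0(L)$. Finally, since an irreducible homological boundary is a topological boundary by Lemma~\ref{l:boundaries}, the bound in Lemma~\ref{l:irreducible} shows that any such irreducible minimal boundary has at most $g+1$ components.

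The argument is essentially bookkeeping, and I do not expect a serious obstacle. The only points requiring a little care are the termination of the topological decomposition into irreducible pieces, and the observation that ``irreducible'' (for homological boundaries) is exactly the hypothesis needed to invoke Lemma~\ref{l:irreducible}; both follow at once from the facts that a splitting reduces the component count and that irreducible homological boundaries are topological.
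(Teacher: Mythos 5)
Your proposal is correct and follows essentially the same route as the paper: decompose the minimal boundary into irreducible topological boundaries (the paper cites Lemma~\ref{l:boundaries} directly, while you make the termination of the splitting procedure explicit), then use the non-negativity from Lemma~\ref{l:actioncrit} together with additivity of $\SSS_{c_0(L)}$ to force each piece to have zero action and hence be minimal, and finally invoke Lemma~\ref{l:irreducible} for the bound $g+1$ on the number of components. No gaps to report.
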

\begin{proof}
Let $\ggamma$ be a minimal boundary at level $c_0(L)$, whose existence is guaranteed by Theorem~\ref{t:below_Mane}. By Lemma~\ref{l:boundaries}, the homological boundary $\ggamma$ can be decomposed as a disjoint union $\ggamma=\ggamma_1\cup...\cup\ggamma_k$, where each $\ggamma_i$ is an irreducible topological boundary. Since $\SSS_{c_0(L)}(\ggamma)=\SSS_{c_0(L)}(\ggamma_1)+...+\SSS_{c_0(L)}(\ggamma_m)=0$, Lemma~\ref{l:actioncrit} implies that $\SSS_{c_0(L)}(\ggamma_i)=0$ for all $i=1,...,m$.
Therefore, all the irreducible topological boundaries $\ggamma_i$ are minimal, and Lemma~\ref{l:irreducible} implies that each of them has at most $g+1$ components.
\end{proof}

The upper bound $g+1$ on the number of components of an irreducible minimal boundary with energy $c_0(L)$ is optimal. Indeed, as the following example shows, any given irreducible topological boundary is the unique minimal boundary with energy $c_0(L)$ for a suitable Tonelli Lagrangian $L$.

\begin{exm}
Let $M$ be a closed oriented surface, and $\ggamma=(\gamma_1,...,\gamma_{m})$ an irreducible topological boundary in $M$. We choose a Riemannian metric $g$ on $M$ with respect to which the components $\gamma_i:\R/p_i\Z\to M$ have unit speed. We denote by $|\cdot|$ the norm on vectors and covectors induced by $g$. We choose a 1-form $\theta$ on $M$ such that $|\theta_q|<1$ for all $q\in M\setminus\ggamma$, and $\theta_{\gamma_i(t)}=-g(\dot\gamma_i(t),\cdot)$ for all $i=1,...,m$ and $t\in\R/p_i\Z$. We define the Tonelli Lagrangian 
\[L:\Tan M\to\R, \qquad L(q,v)=\tfrac12g_q(v,v)+\theta_q(v),\] and claim that $\ggamma$ is the unique minimal boundary for $L$ with energy $c_0(L)$. Indeed, consider the dual Tonelli Hamiltonian $H:\Tan^*M\to\R$, $H(q,p)=\tfrac12|p-\theta_q|^2$.
By the minmax characterization \cite[Theorem~A]{Contreras:1998lr} of the Ma\~n\'e critical value, we have
\begin{align*}
c_0(L)\leq c(L)=\inf_{u\in C^\infty(M)} \max_{q\in M} H(q,\diff u(q))\leq \max_{q\in M}H(q,0)=\tfrac12.
\end{align*}
Since 
\begin{align*}
\SSS_{1/2}(\ggamma)=\int_0^p \Big(\tfrac12|\dot\zeta(t)| +\theta(\dot\zeta(t))\Big)\,\diff t + \tfrac12\, p 
= -\tfrac12p+\tfrac12p
=0,
\end{align*}
we have that $c_0(L)\geq\tfrac12$. Therefore, 
\begin{align*}
 c(L)=c_0(L)=\tfrac12>0=e_0(L).
\end{align*}
and $\ggamma$ is a minimal boundary with energy $c_0(L)$.
Assume now that $\zeta:\R/p\Z\to M$ is a periodic orbit with energy $c_0(L)$. Since the energy function associated to $L$ is 
$E:\Tan M\to\R$, $E(q,v)=\tfrac12g_q(v,v)$,
the curve $\zeta$ has unit speed. If we assume that $\zeta$ is not one of the components of $\ggamma$, we have $\theta(\dot\zeta)>-1$, and therefore
\begin{align*}
\SSS_{1/2}(\zeta)=\int_0^p \Big(\tfrac12|\dot\zeta(t)| +\theta(\dot\zeta(t))\Big)\,\diff t + \tfrac12 p > -\tfrac12 p + \tfrac12 p = 0.
\end{align*}
This shows that no curve other than the $\gamma_i$'s is the component of a minimal boundary with energy $c_0(L)$.
\hfill\qed
\end{exm}

\subsection{The invariant sets $\Graph_e(L)$}\label{s:graph}
Let $M$ be a closed oriented surface, and $L:\Tan M\to\R$ a Tonelli Lagrangian such that $e_0(L)<c_0(L)$. Let us rephrase the statement of Theorem~\ref{t:graph} in a way that is more convenient for its proof:
\vspace{5pt}

\noindent
\textbf{Theorem~\ref{t:graph} (rephrased)}. \textsl{If $\ggamma=(\gamma_1,...,\gamma_m)$ and $\zzeta=(\zeta_1,...,\zeta_n)$ are two minimal boundaries with energy $e\in(e_0(L),c_0(L)]$ for $L$ such that $\gamma_i(r)=\zeta_j(s)$ for some $i\in\{1,...,m\}$, $j\in\{1,...,n\}$, and $r,s\in\R$, then $\gamma_i(r+t)=\zeta_j(s+t)$ for all $t\in\R$.}

\begin{proof}
We fix an energy $e>e_0(L)$ and an arbitrary Riemannian metric on $M$ which induces a distance $\dist:M\times M\to[0,\infty)$. We already introduced the Tonelli injectivity radius $\rho_\inj>0$ in Section~\ref{s:below_Mane}. Actually, by \cite[Lemma~2.3 and Cor.~2.5]{Asselle:2016qv}, $\rho_\inj$ can be chosen so that, for each closed Riemannian ball $B\subset M$ of radius $\rho_\inj$, the following properties hold:
\begin{itemize}
\item[(i)] for all distinct curves $\gamma:[a,b]\to B$ and $\zeta:[c,d]\to B$ that are portions of some orbits of the Lagrangian system of $L$ with energy $e$ satisfying $\gamma(t_1)=\zeta(s_1)$ and $\gamma(t_2)=\zeta(s_2)$ for some $t_1,t_2\in[a,b]$ and $s_1,s_2\in[c,d]$, we have $t_1\leq t_2$ if and only if $s_1\geq s_2$;

\item[(ii)] any absolutely continuous periodic curve $\gamma:\R/p\Z\to B$ with $p>0$ is contractible and has action $\SSS_e(\gamma)>0$.
\end{itemize}

Let us now consider two minimal boundaries $\ggamma$ and $\zzeta$ as in the statement. Since they are in particular smooth embedded multicurves (by Lemma~\ref{l:minimal_topological_boundaries}), we can find $\rho=\rho(\ggamma,\zzeta)\in(0,\rho_\inj)$ small enough such that the following property holds
\begin{itemize}
 
\item[(iii)] for each closed Riemannian ball $B\subset M$ of radius $\rho$,  the intersections  $\ggamma\cap B$ and $\zzeta\cap B$ are diffeomorphic to a compact interval (and possibly the interval reduces to a single point when a multicurve intersects $B$ with an external tangency).
 
\end{itemize}
Let $\Sigma_1,\Sigma_2\subset M$ be the open subsets whose oriented boundaries are $\partial\Sigma_1=\ggamma$ and $\partial\Sigma_2=\zzeta$. A priori, the open sets $\Sigma_1\cap\Sigma_2$ and $M\setminus(\overline{\Sigma_1\cup\Sigma_2})$ may have infinitely many connected components (this happens if $\ggamma$ and $\zzeta$ have non-isolated intersections). We denote by $\Pi_1\subset \Sigma_1\cap\Sigma_2$ and $\Pi_2\subset M\setminus(\overline{\Sigma_1\cup\Sigma_2})$ the union of their ``small'' connected components, where a connected component is considered small if it is contained in a Riemannian ball of radius $\rho$. Points (i) and (iii) imply that every connected component of $\Pi_1$ and $\Pi_2$ is an open ball whose oriented boundary is a curve obtained by concatenating an interval contained in $\ggamma$ with an interval contained in $\zzeta$.

Notice that the open sets $\Sigma_1\cap\Sigma_2\setminus\Pi_1$ and $M\setminus(\overline{\Sigma_1\cup\Sigma_2}\cup \Pi_2)$ have finitely many connected components. Indeed, any such connected component $\Upsilon$ is not contained in any  Riemannian ball of radius $\rho$, and therefore its boundary $\partial\Upsilon$ has length at least $2\rho$. This gives
\begin{align*}
\# \pi_0(\Sigma_1\cap\Sigma_2\setminus\Pi_1) + \#\pi_0(M\setminus(\overline{\Sigma_1\cup\Sigma_2}\cup \Pi_2)) 
\leq \frac{\mathrm{length}(\ggamma) + \mathrm{length}(\zzeta)}{2\rho}.
\end{align*}

The oriented boundaries $\ttheta:=\partial(\Sigma_1\cap\Sigma_2\setminus\Pi_1)$ and $\eeta:=\partial(-M\setminus(\overline{\Sigma_1\cup\Sigma_2}\cup \Pi_2))$, where the minus sign denotes the reverse orientation, are obtained by concatenating  finitely many subintervals of the multicurve $\ggamma\cup\zzeta$ (here, each subinterval of $\ggamma\cup\zzeta$ contributes to at most one subinterval of $\ttheta\cup\eeta$). Indeed, assume that there is a continuous curve of the form $\sigma:[0,t_4]\to\partial(\Sigma_1\cap\Sigma_2)$ or $\sigma:[0,t_4]\to\partial(M\setminus(\overline{\Sigma_1\cup\Sigma_2}))$ such that, for some $0<t_1<t_2<t_3<t_4$, $\sigma|_{[0,t_1]}$ and $\sigma|_{[t_2,t_3]}$ are subintervals of $\ggamma$,  $\sigma|_{[t_1,t_2]}$ and $\sigma|_{[t_3,t_4]}$ are subintervals of $\zzeta$, and that $\sigma|_{[t_1,t_2]}$ and $\sigma|_{[t_2,t_3]}$ have length at most $\rho$. The curve $\sigma|_{[t_1,t_3]}$ is contained in a closed Riemannian ball $B$ of radius $\rho$. Properties (i) and (iii) imply that $\sigma(t_1)=\sigma(t_3)$, that is, $\sigma$ winds around the closed curve $\sigma|_{[t_1,t_3]}$ of length at most $2\rho$, and therefore it is the boundary of a connected component of $\Pi_1$ or $\Pi_2$.

\begin{figure}
\begin{center}
\begin{footnotesize}
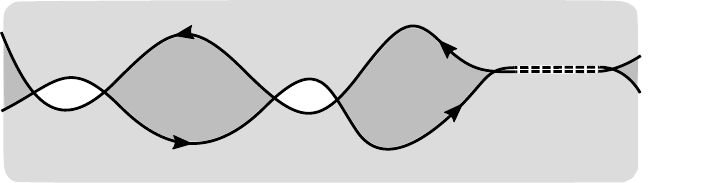 
\caption{Example of intersections between the multicurves $\ggamma$ and $\zzeta$. The small white regions are components of $\Pi_2$. The support of the curves $\gamma_i$ and $\zeta_j$ coincide with opposite orientation in the dashed subintervals, which are contained in $\partial(\overline{\Sigma_1\cup\Sigma_2})\setminus\partial(\Sigma_1\cup\Sigma_2)$.}
\label{f:extra_portions}
\end{footnotesize}
\end{center}
\end{figure}

The portions of $\ggamma$ and $\zzeta$ that are not employed for building $\ttheta$ and $\eeta$ can be of two kinds: those that form the oriented boundaries $\partial\Pi_1$ and $-\partial\Pi_2$, and those who form ``thin'' closed curves in $\partial(\overline{\Sigma_1\cup\Sigma_2})\setminus\partial(\Sigma_1\cup\Sigma_2)$, see Figure~\ref{f:extra_portions}. We denote by $\gamma_i':[0,a_i]\to M$, for $i=1,...,k$, and $\zeta_j':[0,b_j]\to M$, for $j=1,...,l$, the portions of $\ggamma$ and $\zzeta$ respectively that are not employed for building $\ttheta$ and $\eeta$. We denote by $I\subset M$ the set of intersections between $\ggamma$ and $\zzeta$. Notice that $\gamma_i'(0),\gamma_i'(a_i)\in I$, and moreover each $t\in[0,a_i]$ is contained in a closed interval $[t',t'']\in[0,a_i]$ such that $\gamma_i'(t'),\gamma_i'(t'')\in I$ and $\dist(\gamma_i'(t'),\gamma_i'(t''))\leq 2\rho$. An analogous statement holds for the curves $\zeta_j'$. Properties (i) and (iii) imply that $k=l$ and, up to reordering the curves $\zeta_1',...,\zeta_l'$, we can find finite sequences 
\begin{align*}
 0=a_{i,0}<a_{i,1}<...<a_{i,k_i}=a_i,\\
 0=b_{i,k_i}<b_{i,k_i-1}<...<b_{i,0}=b_i
\end{align*}
such that, for all $i=1,...,k$,
\begin{align*}
 \gamma_i'(a_{i,j}) = \zeta_i'(b_{i,j}),\qquad\qquad & \forall j=1,...,k_i,\\
 \dist(\gamma_i'(a_{i,j}),\gamma_i'(a_{i,j+1}))\leq 2\rho,\qquad & \forall j=1,...,k_i-1.
\end{align*}
Therefore, for each $i\in\{1,...,k\}$ and $j\in\{1,...,k_i-1\}$, the restrictions $\gamma_i'|_{[a_{i,j},a_{i,j+1}]}$ and $\zeta_i'|_{[b_{i,j+1},b_{i,j}]}$ are contained in a same closed Riemannian ball of radius $\rho$, and can be concatenated to form a loop. This, together with property~(ii), implies that 
\begin{align}
\label{e:extra_positive_action}
\sum_{i=1}^{k}\Big( \SSS_e(\gamma_i') + \SSS_e(\zeta_i') \Big) \geq 0
\end{align}
and such an inequality is strict unless $k=0$, that is, unless all the portions of $\ggamma$ and $\zzeta$ are employed for building $\ttheta$ and $\eeta$. This implies that
\begin{equation}
\label{e:two_minima}
\begin{split}
\SSS_e(\ttheta) + \SSS_e(\eeta)
 & \leq
\SSS_e(\ttheta) + \SSS_e(\eeta) + \sum_{i=1}^{k}\Big( \SSS_e(\gamma_i') + \SSS_e(\zeta_i') \Big)\\
 & = \SSS_e(\ggamma) + \SSS_e(\zzeta) \\
 & = 2 \inf_{\tb}\SSS_e \\
 & = 2 \inf_{\Mult_e}\SSS_e,
\end{split}
\end{equation}
Since $\ttheta$ and $\eeta$ are formed by concatenating finitely many subintervals of $\ggamma$ and $\zzeta$, in particular  $\ttheta,\eeta\in\Mult_e$.  The inequality~\eqref{e:two_minima} thus forces
\begin{align*}
\SSS_e(\ttheta) = \SSS_e(\eeta) = \SSS_e(\ggamma) = \SSS_e(\zzeta) = \inf_{\Mult_e}\SSS_e,
\end{align*}
and $k=0$. In particular $\Pi_1=\Pi_2=\varnothing$, and
\begin{align*}
 \ttheta =\partial(\Sigma_1\cap\Sigma_2),
 \qquad
 \eeta=\partial(\Sigma_1\cup\Sigma_2).
\end{align*}
By Lemma~\ref{l:minima_are_embedded}, $\ttheta$ and $\eeta$ are embedded multicurves whose components are simple Tonelli waists with energy $e$. Since $\ttheta$ and $\eeta$  were constructed by concatenating subintervals of $\ggamma$ and $\zzeta$, each  component of $\ttheta$ and $\eeta$ is a component of $\ggamma$ or $\zzeta$.

\begin{figure}
\begin{center}
\begin{footnotesize}
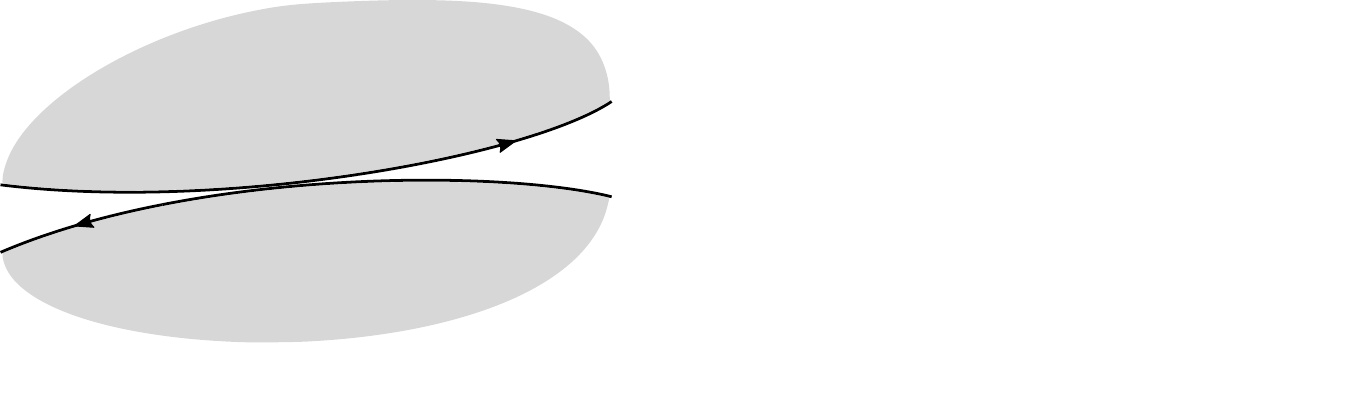 
\caption{\textbf{(a)} Tangency between $\ggamma$ and $\zzeta$.
\textbf{(b)} Transversal intersection between $\ggamma$ and $\zzeta$.}
\label{f:intersection}
\end{footnotesize}
\end{center}
\end{figure}

Assume now that $\gamma_i(r)=\zeta_j(s)$ for some $i\in\{1,...,m\}$, $j\in\{1,...,n\}$, and $r,s\in\R$, but $\gamma_i(r+t)\neq\zeta_j(s+t)$ for some $t\in\R$. Therefore, $\gamma_i$ and $\zeta_j$ intersect, but are not the same curve. This gives a contradiction: if the only intersections between $\gamma_i$ and $\zeta_j$ were tangencies, the multicurve $\eeta=\partial(\Sigma_1\cup\Sigma_2)$ would not be embedded (see Figure~\ref{f:intersection}(a)); otherwise, if $\gamma_i$ and $\zeta_j$ intersected in a topologically essential way (for instance transversally), the components of $\eeta$ would not be orbits of the Lagrangian system of $L$ (in case of a transvere intersection, they would not even be smooth, see Figure~\ref{f:intersection}(b)).
\end{proof}

Let us now focus on the energy level $c_0(L)$, and consider the setting of invariant measures introduced before Theorem~\ref{t:graph}. We fix a minimal measure $\mu\in\Meas_{\min}(L)$. An argument due to Haefliger \cite[Prop.~2.1]{Contreras:2004lv} implies that the support of $\mu$ is foliated by periodic orbits of the Euler-Lagrange flow $\phi_L^t$ in the energy level $E^{-1}(c_0(L))$. For each $(q,v)\in\supp(\mu)$, we denote by $p(q,v)$ the period of the orbit $t\mapsto\Gamma_{(q,v)}(t):=\phi_L^t(q,v)$, and we denote by $\gamma_{(q,v)}:=\pi\circ\Gamma_{(q,v)}$ its base projection, which is a simple periodic curve in $M$ according to Mather's graph theorem.

\begin{lem}\label{l:neaby_periodic_orbits}
For each $(q,v)\in\mathrm{supp}(\mu)$ and $\epsilon>0$, there exists a neighborhood $U\subset \Tan M$ of $(q,v)$ such that, for every $(q',v')\in\mathrm{supp}(\mu)\cap U$, the periodic orbit $\gamma_{(q',v')}$ has minimal period $p(q',v')\in(p(q,v)-\epsilon,p(q,v)+\epsilon)$ and satisfies $[\gamma_{(q',v')}]=[\gamma_{(q,v)}]$ in $\Hom_1(M;\Z)$.
\end{lem}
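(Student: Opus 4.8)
The plan is to exploit the rigidity of the foliation of $\supp(\mu)$ by periodic orbits, combined with the subcritical graph theorem (Theorem~\ref{t:graph}) applied at the level $e=c_0(L)$. First I would recall (from Haefliger's argument and Mather's graph theorem, as quoted just above the statement) that $\supp(\mu)$ is a compact invariant set foliated by periodic orbits, and that the base projection $\pi$ restricts to an injective map on $\supp(\mu)$; hence the map $(q,v)\mapsto \gamma_{(q,v)}$, sending a point of $\supp(\mu)$ to the simple periodic curve through it, is well defined and $\supp(\mu)$ is exactly the disjoint union of the lifts $\Gamma_{(q,v)}(\R)$ of these curves. The key point is that each orbit $\Gamma_{(q,v)}$ is an action-minimizing periodic curve with energy $c_0(L)$ (by the characterization of $\Meas_{\min}(L)$ and the fact that, as shown in Section~\ref{s:Mane}, its projection is a component of a minimal boundary with energy $c_0(L)$ — this uses Proposition~\ref{p:Mather}, which identifies $\Graph_{c_0(L)}(L)=\Mather_0(L)$).

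Next I would set up the continuity argument. Fix $(q,v)\in\supp(\mu)$ with primitive period $p:=p(q,v)$. By continuous dependence of the Euler-Lagrange flow on initial conditions, for any $\epsilon\in(0,p)$ there is a neighborhood $U$ of $(q,v)$ in $\Tan M$ so small that for $(q',v')\in U$ the orbit segment $\{\phi_L^t(q',v')\mid t\in[0,p+\epsilon]\}$ stays uniformly close to the closed orbit $\Gamma_{(q,v)}([0,p])$; in particular it is a nearly-closed curve that winds once around $\gamma_{(q,v)}$ and is freely homotopic to it. If $(q',v')\in\supp(\mu)\cap U$, then $\gamma_{(q',v')}$ is a genuine closed simple curve and a component of a minimal boundary with energy $c_0(L)$; since it stays in a tubular neighborhood of $\gamma_{(q,v)}$, its primitive period cannot be much larger than $p$, so $p(q',v')\leq p+\epsilon$ after possibly shrinking $U$. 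The homology statement $[\gamma_{(q',v')}]=[\gamma_{(q,v)}]$ then follows because, being freely homotopic to a curve winding once around $\gamma_{(q,v)}$ (or, if $p(q',v')$ is a proper divisor, an iterate thereof whose total homology still equals $[\gamma_{(q,v)}]$ after accounting for the multiplicity), the two curves are homologous; one checks the winding number is exactly one using injectivity of $\pi$ on $\supp(\mu)$, which forbids $\gamma_{(q',v')}$ from being a nontrivial cover of a shorter orbit in $\supp(\mu)$.

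The remaining point, and the main obstacle, is the lower bound $p(q',v')>p-\epsilon$: a priori $\gamma_{(q',v')}$ could be a short closed orbit making several small loops inside the tube around $\gamma_{(q,v)}$, which would have period close to a fraction of $p$. Here I would invoke the subcritical graph theorem at energy $c_0(L)$ together with the fact that both $\gamma_{(q,v)}$ and $\gamma_{(q',v')}$ are components of minimal boundaries with that energy: if $\gamma_{(q',v')}$ came arbitrarily close to $\gamma_{(q,v)}$ in the $C^1$ topology without coinciding with it, the two would intersect (indeed, two distinct nearby simple closed curves bounding nearby regions must cross or be tangent), contradicting the conclusion $\gamma_i(r+t)=\zeta_j(s+t)$ for all $t$ of Theorem~\ref{t:graph} (rephrased). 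Therefore, for $U$ small enough, $\gamma_{(q',v')}$ must either coincide with $\gamma_{(q,v)}$ — in which case $p(q',v')=p$ — or stay $C^1$-bounded away from it; in the latter case the continuity estimate above already pins $p(q',v')$ near $p$. In both cases $p(q',v')\in(p-\epsilon,p+\epsilon)$, completing the proof. I would expect the bookkeeping around the multiplicity (distinguishing $p(q',v')=p$ from proper divisors) to be the only delicate part, and it is handled precisely by the graph property, which leaves no room for distinct nearby orbits in $\supp(\mu)$.
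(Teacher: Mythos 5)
Your argument has two serious problems. First, it is circular within the paper's logical structure: you justify applying Theorem~\ref{t:graph} to $\gamma_{(q,v)}$ and $\gamma_{(q',v')}$ by asserting that these orbits are components of minimal boundaries with energy $c_0(L)$, citing Proposition~\ref{p:Mather}; but Proposition~\ref{p:Mather} (via Lemma~\ref{l:Mather1}) is proved \emph{using} the present lemma, so at this stage the only available input is that $\supp(\mu)$ is foliated by periodic orbits whose projections are simple and pairwise disjoint (Haefliger plus Mather's graph theorem) -- nothing about minimal boundaries can be invoked yet. Second, even granting that input, your key step is false: two distinct nearby simple closed curves need not ``cross or be tangent'' -- they can be disjoint and parallel, and distinct orbits of $\supp(\mu)$ may well accumulate in the $C^1$ topology on $\gamma_{(q,v)}$ (think of a region foliated by invariant circles). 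Hence the dichotomy ``either $\gamma_{(q',v')}$ coincides with $\gamma_{(q,v)}$ or stays $C^1$-bounded away from it'' does not hold, and your argument for the lower bound $p(q',v')>p(q,v)-\epsilon$ collapses; the graph property does \emph{not} ``leave no room for distinct nearby orbits.'' The upper bound is also incomplete: continuity only shows that the orbit of $(q',v')$ is \emph{nearly} closed after time about $p(q,v)$; it does not show that it actually closes then, and if it fails to close it could later leave the tube and have a much larger primitive period.

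For comparison, the paper's proof uses only Mather's graph theorem: choose a small interval $I$ transverse to $\gamma_{(q,v)}$ at $q$; the Lipschitz graph property forces every orbit of $\supp(\mu)$ meeting $I$ to cross it transversally to the same side. One then shows that the first hitting time $\tau(q')$ of $I$ equals the minimal period $p(q',v')$: if the orbit did not close at the first return, the almost-closed arc, closed up by a small subsegment $\zeta_{(q',v')}\subset I$, would bound an annulus $A$ together with $\gamma_{(q,v)}$, and the simple, $\gamma_{(q,v)}$-avoiding periodic orbit would then have to exit $A$ through the interior of $\zeta_{(q',v')}$ with velocity pointing the wrong way across $I$, contradicting the same-side transversality. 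Since $\tau(q')$ is close to $p(q,v)$ by continuity of the flow, both period bounds follow at once, and the annulus also yields a free homotopy between $\gamma_{(q',v')}$ and $\gamma_{(q,v)}$, hence the equality of homology classes. Some argument of this kind -- identifying the minimal period with the first return time to a transverse section -- is exactly what your proposal is missing.
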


\begin{proof}
We fix $(q,v)\in\mathrm{supp}(\mu)$, and an embedded open interval $I\subset M$ intersecting the curve $\gamma_{(q,v)}$ transversely at $q$. Mather's graph theorem \cite{Mather:1991xd} also says that the inverse of the injective projection $\pi:\supp(\mu)\to\pi(\supp(\mu))\subset M$ is a Lipschitz map. Therefore, up to shrinking $I$, we can suppose that for each $q'\in I\cap\pi(\supp(\mu))$, if we denote by $v'\in\Tan_{q'}M$ the unique tangent vector such that $(q',v')\in\supp(\mu)$, $v'$ is transverse to $I$ and points to the same side of $I$ as the vector $v$. Up to choosing a smaller interval $J\subseteq I$ around $q$, there is a well defined continuous hitting-time function
$\tau  :J\cap\pi(\supp(\mu))\to(0,\infty)$ given by
$$\tau(q')  :=\inf\Big\{t>0\ \Big|\ (q',v')\in\supp(\mu),\ \gamma_{(q',v')}(t)\in I\Big\}.$$
Up to further shrinking $J$ around $q$ the following holds: for every $(q',v')\in \supp(\mu)\cap\pi^{-1}(J)$, the curve $\gamma_{(q',v')}|_{[0,\tau(q')]}$ is either closed or almost closed.
In the latter case, Mather's graph theorem implies that the points $q'$ and $\gamma_{(q',v')}(\tau(q'))$ lie in the same connected component of $I\setminus \{q\}$. 
Therefore, if we close up $\gamma_{(q',v')}|_{[0,\tau(q')]}$ by concatenating it with a little segment $\zeta_{(q',v')}\subset I$, we obtain a closed curve $\gamma_{(q',v')}|_{[0,\tau(q')]}*\zeta_{(q',v')}$ that, together with $\gamma_{(q,v)}$, form the (non-oriented) boundary of an annulus $A\subset M$. We claim that $\tau(q')=p(q',v')$, that is, the segment $\zeta_{(q',v')}$ is always reduced to a point, and thus the curve $\gamma_{(q',v')}$ closes up at the hitting time $\tau(q')$. Indeed, assume by contradiction that $\zeta_{(q',v')}$ is not a point curve. Then, $\gamma_{(q',v')}(t)$ belongs to the interior of the annulus $A$ for either $t>\tau(q')$ sufficiently close to $\tau(q')$ or $t<0$ sufficiently close to $0$. Consider the first case (Figure~\ref{f:annulus}), the second one being completely analogous. Since the curve $\gamma_{(q',v')}$ is periodic and does not have self-intersections nor intersections with $\gamma_{(q,v)}$, the exit time 
\[t':=\sup\Big\{t>\tau(q')\ \Big|\ \gamma_{(q',v')}(s)\in A, \ \forall s\in(\tau(q'),t)\Big\}\] 
is finite and $\gamma_{(q',v')}(t')$ lies in the interior of $\zeta_{(q',v')}$. However, this violates the above mentioned transversality property implied by Mather's graph theorem, which forces $\dot\gamma_{(q',v')}(t')$ to point in the interior of $A$.

\begin{figure}
\begin{center}
\begin{small}
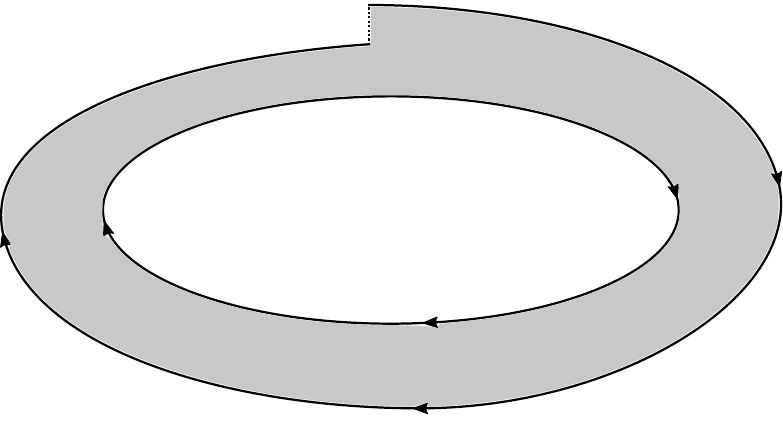 
\caption{The annulus $A$.}
\vspace{-3mm}
\label{f:annulus}
\end{small}
\end{center}
\end{figure}

Now, for each $\epsilon>0$, we can choose a neighborhood $U\subset\Tan M$ of $(q,v)$ that is small enough so that:
\begin{itemize}
\item $\tau(q')\in (p(q,v)-\epsilon,p(q,v)+\epsilon)$ for each $(q',v')\in U\cap\supp(\mu)\cap\pi^{-1}(J)$,
\item the periodic curve $\gamma_{(q',v')}(t)=\pi\circ\phi_L^t(q,v)$ hits $J$ in both positive and negative time $t$,   for each $(q',v')\in \mathrm{supp}(\mu)\cap U$.
\end{itemize}
The neighborhood $U$ has the desired properties.
\end{proof}

A Theorem of Ma\~n\'e implies that the measure $\mu\in\Meas_{\min}(L)$ also minimizes the action functional $\SSS$ among (not necessarily invariant) holonomic probability measures with zero rotation vector. This implies that each periodic orbit $\gamma_{(q,v)}$, for $(q,v)\in\supp(\mu)$, is a Tonelli waist with energy $c_0(L)$. Moreover, we have the following lemmas. 

\begin{lem}
\label{l:Mather1}
Every periodic orbit $\gamma_1$ of $L$ whose lift $(\gamma_1,\dot \gamma_1)$ is contained in $\supp(\mu)$ is a component of topological boundary $\ggamma=(\gamma_1,...,\gamma_m)$ whose lift to $\Tan M$ is contained in $\supp(\mu)$.
\end{lem}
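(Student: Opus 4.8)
The plan is to produce the topological boundary $\ggamma$ as the (piecewise smooth) oriented boundary of a superlevel set of a bounded-variation function built from $\mu$ via its foliated cycle. First I would fix the geometry. By Haefliger's argument recalled above, $\supp(\mu)$ is foliated by periodic orbits of $\phi_L^t$ contained in $E^{-1}(c_0(L))$; set $\Lambda:=\pi(\supp(\mu))\subset M$. By Mather's graph theorem $\pi|_{\supp(\mu)}$ is injective, so two distinct such orbits have disjoint images; hence $\Lambda$ is a union of pairwise disjoint simple closed curves, each a Tonelli waist with energy $c_0(L)$, one of which is $\gamma_1$. Since $c_0(L)>e_0(L)$, the level $E^{-1}(c_0(L))$ is a closed $3$-manifold free of rest points, the speed of any orbit on it is bounded below in terms of $\min_M(c_0(L)-E(\cdot,0))>0$, and a non-vanishing flow on a compact manifold has periods bounded away from zero; together these give a uniform lower bound $\ell_0>0$ on the length of every orbit with energy $c_0(L)$. (Lemma~\ref{l:neaby_periodic_orbits} moreover makes the homology class of the orbit through $(q,v)$ locally constant on $\supp(\mu)$, which is convenient although not strictly needed below.)

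Next I would pass to the foliated cycle $T_\mu$, the $1$-current $\omega\mapsto\int_{\Tan M}\omega_q(v)\,\diff\mu(q,v)$. Invariance of $\mu$ makes $T_\mu$ closed, and its homology class in $\Hom_1(M;\R)$ equals the rotation vector $\rho(\mu)=0$; by de Rham's theorem for currents on the surface $M$, a closed null-homologous $1$-current is exact, so $T_\mu=\partial C$ for a $2$-current $C$, which on the oriented surface $M$ is of the form $C(\beta)=\int_M g\,\beta$ for some $g\in L^1(M)$. Since $T_\mu$ is (the push-forward of) the finite measure $\mu$, the current $\partial[g]=T_\mu$ is a measure, hence $g\in BV(M)$; since $T_\mu$ is supported on $\Lambda$, $g$ is locally constant on $M\setminus\Lambda$; and since $(\gamma_1,\dot\gamma_1)\in\supp(\mu)$, the curve $\gamma_1$ lies in $\supp(|Dg|)$, so $g$ is non-constant in every neighbourhood of $\gamma_1$ and, via $T_\mu=\partial[g]$, the flow orientation of $\gamma_1$ agrees with the co-orientation in which $g$ is non-decreasing across $\gamma_1$.

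Finally I would extract $\ggamma$ from a level set of $g$. By the $BV$ coarea formula $\int_\R\mathrm{Per}(\{g>c\})\,\diff c=|Dg|(M)<\infty$, for a.e.\ $c$ the set $\{g>c\}$ has finite perimeter; as $g$ is locally constant off $\Lambda$ and the curves forming $\Lambda$ have length $\ge\ell_0$, for a.e.\ such $c$ — also avoiding the at most countably many values of $g$ on the components of $M\setminus\Lambda$ — the reduced boundary $\partial^*\{g>c\}$ is a finite disjoint union of curves of $\Lambda$; the same holds for $\mathrm{int}\{g\ge c\}$. Writing $c_0$ for the constant trace of $g$ on $\gamma_1$ from the side the co-orientation points to, I would take $\Sigma:=\{g>c\}$ for a good $c$ strictly between the two traces of $g$ on $\gamma_1$ if $g$ jumps there, and $\Sigma:=\mathrm{int}\{g\ge c_0\}$ (for a good $c_0$) if it does not; using that $g$ is non-decreasing and non-constant across $\gamma_1$ one checks that in either case $\gamma_1$, with its flow orientation, is one of the components of $\partial\Sigma$, while $\partial\Sigma$ is a finite disjoint union of leaves of $\Lambda$. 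Then $\ggamma:=\partial\Sigma=(\gamma_1,\dots,\gamma_m)$ is a topological boundary whose components are orbits of $L$ with energy $c_0(L)$ and whose lift to $\Tan M$ lies in $\supp(\mu)$. The main difficulty is precisely this last verification — that $\gamma_1$ really appears as a boundary component carrying the correct orientation — the delicate case being when $\mu$ has no atom on $\gamma_1$ and $\supp(\mu)$ accumulates onto $\gamma_1$ so that $g$ has no jump; the choice between $\{g>c\}$ and $\mathrm{int}\{g\ge c\}$ and the strict monotonicity of $g$ through $\gamma_1$ (forced by $\gamma_1\subset\supp(|Dg|)$) are what resolve it. Everything else — the de Rham isomorphism for currents, the regularity $g\in BV(M)$, the coarea argument, and the length bound $\ell_0$ — is soft. (If $[\gamma_1]=0$ the construction degenerates to taking $\Sigma$ one of the two components of $M\setminus\gamma_1$, oriented so that $\partial\Sigma=\gamma_1$.)
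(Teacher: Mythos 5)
Your route — build the Schwartzman/foliated cycle $T_\mu$, write $T_\mu=\partial[g]$ with $g\in BV(M)$, and take a superlevel set of $g$ — is genuinely different from the paper's argument, which stays homological: there $\supp(\mu)$ is split into finitely many clopen pieces of locally constant homology class (Lemma~\ref{l:neaby_periodic_orbits}), Birkhoff's theorem turns $\rho(\mu)=0$ into a vanishing positive combination of these classes, and Lemmas~\ref{l:boundaries} and~\ref{l:irreducible} then produce a topological boundary through $[\gamma_1]$ whose other components are chosen orbits from the other pieces. Your GMT construction is a legitimate alternative, but as written it has a genuine gap at exactly the point you flag. In the case where $g$ has no jump across $\gamma_1$, the level is forced to be the trace value $c_0=g|_{\gamma_1}$; this value cannot be drawn from the full-measure set of ``good'' levels supplied by the coarea formula, so your only finiteness mechanism no longer applies, and nothing in the argument rules out that $\{g>c_0\}$ (or $\mathrm{int}\{g\ge c_0\}$) has infinitely many essential-boundary leaves: finiteness of $|Dg|(M)$ bounds the total variation, not the number of leaves on which $g$ crosses one fixed level, since the crossing amplitudes may be summable. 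Writing ``for a good $c_0$'' does not resolve this, and the ``strict monotonicity of $g$ through $\gamma_1$'' you invoke is not forced by $\gamma_1\subset\supp|Dg|$ — that only gives one-sided non-constancy (your $\{g>c\}$ versus $\mathrm{int}\{g\ge c\}$ dichotomy does handle that particular issue).

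The gap is fixable, but it needs an argument valid for \emph{every} level, not almost every level. The natural one: by the Lipschitz graph property from Mather's theorem, in a flow box around any point of $\Lambda=\pi(\supp(\mu))$ the leaves of the lamination are coherently oriented, so the identity $T_\mu=\partial[g]$ forces $g$ to be (up to a null set) a function of the transverse coordinate alone, monotone with respect to the coherent co-orientation. Covering the compact set $\Lambda$ by finitely many such boxes, each box meets the essential boundary of any superlevel set in at most one plaque, which yields at once: (i) for every $c$, the essential boundary consists of finitely many whole leaves (the whole-leaf statement itself needs the propagation along the flow direction inside boxes, which your ``length $\ge\ell_0$ plus finite perimeter'' remark does not give); and (ii) at \emph{every} boundary leaf, not just at $\gamma_1$, the Stokes orientation of $\partial\Sigma$ agrees with the flow orientation — a point your write-up only addresses at $\gamma_1$, although it is needed for the conclusion that the lift of the whole multicurve $\ggamma$ lies in $\supp(\mu)$. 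With these two additions your construction goes through; without them, the delicate continuous-trace case is not proved.
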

\begin{proof}
If $[\gamma_1]=0$, then $\gamma_1$ is itself a topological boundary, and the lemma follows. Therefore, in the remainder of the proof, we consider the case where $[\gamma_1]\neq0$. Since $\supp(\mu)$ is compact, by Lemma~\ref{l:neaby_periodic_orbits} we can decompose it as a finite disjoint union $\supp(\mu)=K_1\cup...\cup K_n$ such that, for each $i=1,...,n$, the subset $K_i$ is open and closed in $\supp(\mu)$, and there exists $h_i\in\Hom_1(M;\Z)$ such that $[\gamma_{(q,v)}]=h_i$ for all $(q,v)\in K_i$. If needed, we relabel the sets $K_i$ so that the lift $(\gamma_1,\dot \gamma_1)$ is contained in $K_1$.
We denote by $\chi_i:\Tan M\to[0,1]$ the characteristic function of $K_i$, that is,
\begin{align*}
\chi_i(q,v)=
\left\{
  \begin{array}{@{}lll}
    1, &  & \mbox{if }(q,v)\in K_i, \vspace{2pt} \\ 
    0, &  & (q,v)\in \Tan M\setminus K_i. 
  \end{array}
\right.
\end{align*}
Since the sets $K_i$ are open in $\supp(\mu)$, we have $\mu(K_i)>0$, and therefore we can introduce the probability measures $\mu_i=\mu(K_i)^{-1}\chi_i\mu$ on $K_i$. Since 
\begin{align*}
\mu=\mu(K_1) \mu_1 + \ldots + \mu(K_n)\mu_n,
\end{align*}
we have
\begin{align}
\label{e:rho_mu}
0=\rho(\mu)= \mu(K_1)\rho(\mu_1) +\ldots+ \mu(K_n)\rho(\mu_n).
\end{align}
For each $1$-form $\eta$ on $M$, Birkhoff's ergodic theorem implies that
\begin{align*}
\langle\eta,\mu(K_i)\rho(\mu_i)\rangle
&=
\int_{K_i}\!\!\! \eta_x(v)\,\diff\mu(q,v)\\
&=
\int_{K_i} p(q,v)^{-1}\left(\int_{\gamma_{(q,v)}} \!\!\! \eta\right)\diff\mu(q,v)\\
&=
\langle\eta,h_i\rangle
\underbrace{\int_{K_i} p(q,v)^{-1}\diff\mu(q,v)}_{=:a_i}.
\end{align*}
and therefore $\mu(K_i)\rho(\mu_i)=a_i h_i\in\Hom_1(M;\R)$, where $a_i>0$. Thus, Equation~\eqref{e:rho_mu} implies that
$a_1 h_1 +...+ a_n h_n=0$.
Since the classes $h_i$ are contained in $\Hom_1(M;\Z)$, the previous equality can also be satisfied with positive integer coefficients, i.e.
\begin{align*}
b_1h_1 +\ldots + b_nh_n=0
\end{align*}
for some integer coefficients $b_i>0$. Let us choose an arbitrary embedded homological boundary $\zzeta$ having precisely $b_i$ components with homology $h_i$, for all $i=1,...,n$. By Lemma~\ref{l:boundaries}, $\zzeta$ contains an irreducibile topological boundary $\zzeta'=(\zeta_1',...,\zeta_m')$ such that $[\zeta_1']=h_1$. By Lemma~\ref{l:irreducible}, the classes $[\zeta'_j]$ are pairwise distinct. Therefore, up to relabeling the homology classes $h_2,...,h_n$, we have that $[\zeta_j']=h_{j}$ for all $j=1,...,m$. We conclude that
\begin{align}\label{e:hi}
0=[\zzeta']=h_1+\ldots+h_m.
\end{align}
Now, for each $i=2,...,m$, we choose an arbitrary $(q_i,v_i)\in K_i$, and we consider the corresponding periodic orbit $\gamma_i:=\gamma_{(q_i,v_i)}$. Since $[\gamma_i]=h_i$, Equation~\ref{e:hi} implies that the multicurve $\ggamma:=(\gamma_1,...,\gamma_m)$ is a homological boundary. Since $\ggamma$ is irreducible, it is a topological boundary.
\end{proof}

\begin{lem}
\label{l:Mather2}
Every topological boundary whose lift to $\Tan M$ is contained in the Mather set $\Mather_0(L)$ is a minimal boundary with energy $c_0(L)$.
\end{lem}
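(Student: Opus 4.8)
The plan is to reduce the statement to a single action computation. By Theorem~\ref{t:below_Mane} applied at $e=c_0(L)$ we have $\inf_{\tb}\SSS_{c_0(L)}=0$, so it suffices to show that the given topological boundary $\ggamma=(\gamma_1,\dots,\gamma_m)$ satisfies $\SSS_{c_0(L)}(\ggamma)=0$. Since $\ggamma$ is in particular a homological boundary, Lemma~\ref{l:actioncrit} already gives $\SSS_{c_0(L)}(\ggamma)\geq0$, so what remains is the reverse inequality $\sum_{i=1}^m\SSS_{c_0(L)}(\gamma_i)\leq0$, and I would in fact establish the equality $\SSS_{c_0(L)}(\gamma_i)=\langle c,[\gamma_i]\rangle$ for a single, cleverly chosen cohomology class $c$.

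I would work with the Mather $\beta$-function $\beta\colon\Hom_1(M;\R)\to\R$, $\beta(h)=\min\{\SSS(\mu)\ |\ \rho(\mu)=h\}$ (the minimum being over all $\phi_L^t$-invariant probability measures $\mu$; see \cite{Mather:1991xd}); it is finite, convex and superlinear, hence subdifferentiable everywhere, and $\beta(0)=\inf_{\Meas_0(L)}\SSS=-c_0(L)$. Fix $c\in\Hom^1(M;\R)$ in the subdifferential of $\beta$ at $0$, so that $\beta(h)-\langle c,h\rangle\geq\beta(0)$ for all $h$. Then for every invariant probability measure $\mu'$ one has $\SSS(\mu')-\langle c,\rho(\mu')\rangle\geq\beta(\rho(\mu'))-\langle c,\rho(\mu')\rangle\geq\beta(0)$, with equality whenever $\mu'\in\Meas_{\min}(L)$ (as then $\rho(\mu')=0$ and $\SSS(\mu')=\beta(0)$). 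In other words, every minimal measure with zero rotation vector minimizes the $c$-twisted action $\mu'\mapsto\SSS(\mu')-\langle c,\rho(\mu')\rangle$ over all invariant probability measures, the minimal value being $\beta(0)$.

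The core step is to pass from this minimality of $\mu\in\Meas_{\min}(L)$ to the individual periodic orbits lying in its support. Recall that $\supp(\mu)\subset E^{-1}(c_0(L))$ is foliated by periodic orbits of $\phi_L^t$, so the ergodic components of $\mu$ are periodic-orbit measures $\nu_{(q,v)}$, each equidistributed along the orbit $\gamma_{(q,v)}$, $(q,v)\in\supp(\mu)$. As the $c$-twisted action is affine in the measure and $\mu$ is an absolute minimizer, decomposing $\mu$ into its ergodic components forces $\SSS(\nu_{(q,v)})-\langle c,\rho(\nu_{(q,v)})\rangle=\beta(0)$ for $\mu$-almost every $(q,v)$. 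Now Lemma~\ref{l:neaby_periodic_orbits} shows that the period $p(q,v)$ depends continuously on $(q,v)\in\supp(\mu)$ and that $[\gamma_{(q,v)}]$ is locally constant there; since $\rho(\nu_{(q,v)})=[\gamma_{(q,v)}]/p(q,v)$ and $\SSS(\nu_{(q,v)})=p(q,v)^{-1}\int_0^{p(q,v)}L(\phi_L^t(q,v))\,\diff t$, the function $(q,v)\mapsto\SSS(\nu_{(q,v)})-\langle c,\rho(\nu_{(q,v)})\rangle$ is continuous on $\supp(\mu)$; being equal to $\beta(0)$ on a subset of full $\mu$-measure, hence on a dense subset of $\supp(\mu)$, it equals $\beta(0)$ everywhere on $\supp(\mu)$. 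Since $\SSS_{c_0(L)}(\gamma)=p\big(\SSS(\nu_\gamma)+c_0(L)\big)$ for a periodic orbit $\gamma$ of period $p$ and $c_0(L)=-\beta(0)$, this rewrites as
\begin{align*}
\SSS_{c_0(L)}(\gamma)=\langle c,[\gamma]\rangle
\end{align*}
for every periodic orbit $\gamma$ whose lift is contained in the support of some measure in $\Meas_{\min}(L)$.

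To finish, each component $\gamma_i$ of $\ggamma$ has its lift in $\Mather_0(L)=\bigcup_{\mu\in\Meas_{\min}(L)}\supp(\mu)$, hence in $\supp(\mu_i)$ for some $\mu_i\in\Meas_{\min}(L)$, so $\SSS_{c_0(L)}(\gamma_i)=\langle c,[\gamma_i]\rangle$ with the \emph{same} class $c$ for all $i$; summing, and using that the topological boundary $\ggamma$ is null-homologous,
\begin{align*}
\SSS_{c_0(L)}(\ggamma)=\sum_{i=1}^m\langle c,[\gamma_i]\rangle=\langle c,[\ggamma]\rangle=0,
\end{align*}
which together with $\inf_{\tb}\SSS_{c_0(L)}=0$ shows that $\ggamma$ is a minimal boundary with energy $c_0(L)$. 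The step I expect to be most delicate is the passage from ``$\mu$-almost every periodic orbit in $\supp(\mu)$ is $c$-calibrated'' to ``every periodic orbit in $\supp(\mu)$ is'', since the specific components $\gamma_i$ could a priori sit in a $\mu$-negligible part of $\supp(\mu)$; this is exactly where the continuity of the period and the local constancy of the homology class provided by Lemma~\ref{l:neaby_periodic_orbits} are used, together with the elementary fact that a subset of full $\mu$-measure is dense in $\supp(\mu)$. The other ingredients — existence of a Mather class in $\partial\beta(0)$ and the $c$-minimality of the measures in $\Meas_{\min}(L)$ — are standard Aubry-Mather theory.
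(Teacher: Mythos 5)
Your proof is correct, and it follows the same overall strategy as the paper's: fix a single calibrating cohomology class, prove the identity $\SSS_{c_0(L)}(\gamma)=\langle c,[\gamma]\rangle$ for every periodic orbit whose lift lies in the Mather set, and then conclude from null-homology of $\ggamma$ together with the non-negativity of $\SSS_{c_0(L)}$ on homological boundaries (Lemma~\ref{l:actioncrit}; your additional appeal to Theorem~\ref{t:below_Mane} for $\inf_{\tb}\SSS_{c_0(L)}=0$ is harmless but not needed). Where you genuinely diverge is in how the calibration identity is obtained at \emph{every} point of $\Mather_0(L)$, not just $\mu$-almost everywhere. The paper chooses a closed $1$-form $\eta$ whose class minimizes Mather's $\alpha$-function, uses Paternain--Paternain to get $c_0(L)=c(L-\eta)$, and then cites Fathi--Giuliani--Sorrentino (\cite[Lemma~3.5]{Fathi:2009fu}) for the pointwise statement that the periodic-orbit measure $\mu_{(q,v)}$ minimizes $\int(L-\eta)\,\diff\mu$ for \emph{all} $(q,v)\in\Mather_0(L)$. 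You instead take $c\in\partial\beta(0)$ (by Fenchel duality the same choice as a class minimizing $\alpha$), derive the almost-everywhere calibration from affinity of the $c$-twisted action under the decomposition of $\mu$ into periodic-orbit measures, and then upgrade it to all of $\supp(\mu)$ by the density of full-measure sets in the support together with the continuity of the minimal period and local constancy of $[\gamma_{(q,v)}]$ supplied by Lemma~\ref{l:neaby_periodic_orbits}. In effect you reprove, in this periodic-foliation setting, exactly the content that the paper imports from \cite{Fathi:2009fu}; the paper's route is shorter and leans on known results, while yours is self-contained and makes transparent why the delicate ``a.e.\ to everywhere'' passage works, at the cost of some ergodic-theoretic bookkeeping (one small point worth making explicit: since each $\supp(\mu_i)$ is closed and flow-invariant, a component whose lift meets it is entirely contained in it, so a single minimal measure per component indeed suffices, with the same fixed $c$ for all components).
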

\begin{proof}
The so-called Mather's alpha function $\alpha([\eta])=c(L-\eta)$, where $\eta$ is a closed 1-form on $M$ and $[\eta]\in H^1(M;\R)$, is convex and superlinear, see \cite[Theorem~2-6.4]{Contreras:1999fm}. We denote by $\eta$ a closed 1-form whose cohomology class $[\eta]$ is a global minimizer of $\alpha$. A result of Paternain-Paternain \cite[Theorem~1.1]{Paternain:1997ud} implies that $c_0(L)=c(L-\eta)$.
Let $\Meas(L)$ be the set of Borel probability measures $\mu$ on the tangent bundle $\Tan M$ that are invariant by the Euler-Lagrange flow $\phi_L^t$ (here, unlike in the definition of $\Meas_0(L)$, we do not require $\mu$ to have zero rotation vector). Notice that $\Meas(L)=\Meas(L-\eta)$, since the Euler-Lagrange flows $\phi_L^t$ and $\phi_{L-\eta}^t$ coincide. The Ma\~n\'e critical value $c(L)$ can be characterized as
\begin{align*}
c(L)  =-\inf_{\mu\in\Meas(L)}\int_{\Tan M} L\,\diff\mu.
\end{align*}
For each $(q,v)\in\Mather_0(L)$, we denote by $\mu_{(q,v)}\in\Meas(L)$ the invariant probability measure supported on the periodic orbit $\gamma_{(q,v)}$, i.e.
\begin{align*}
\int_{\Tan M} f\,\diff\mu_{(q,v)} = \frac{1}{p(q,v)} \int_0^{p(q,v)} f\circ\phi_L^t(q,v)\,\diff t,
\qquad
\forall f\in C^0(\Tan M).
\end{align*}
A result of Fathi-Giuliani-Sorrentino \cite[Lemma~3.5]{Fathi:2009fu} implies that, for every $(q,v)\in\Mather_0(L)$, the probability measure $\mu_{(q,v)}$ satisfies
\begin{align*}
\int_{\Tan M} (L-\eta)\,\diff\mu_{(q,v)} 
= 
\inf_{\mu\in\Meas(L)} \int_{\Tan M} (L-\eta)\,\diff\mu
=
-c(L-\eta)
=
-c_0(L).
\end{align*}
If we rephrase this identity in terms of the periodic orbit $\gamma_{(q,v)}$, we obtain that
\begin{align}
\label{e:action_in_Mather_set}
\SSS_{c_0(L)}(\gamma_{(q,v)}) = \int_{\gamma_{(q,v)}} \!\!\! \eta = \langle\eta,[\gamma_{(q,v)}]\rangle,
\qquad
\forall (q,v)\in\Mather_0(L).
\end{align}
Now, let $\ggamma$ be a topological boundary whose lift of each component $(\gamma_i,\dot\gamma_i)$ is contained in the Mather set $\Mather_0(L)$. Since $[\ggamma]=0$ in $\Hom_1(M;\Z)$, Equation~\eqref{e:action_in_Mather_set} implies that
\begin{align*}
\SSS_{c_0(L)}(\ggamma)=\langle\eta,[\ggamma]\rangle = 0.
\end{align*}
Therefore, Lemma~\ref{l:actioncrit} implies that $\ggamma$ is a minimal boundary with energy $c_0(L)$.
\end{proof}
We can now provide the characterization of the Mather set $\Mather_0(L)$ in terms of minimal boundaries.

\begin{prop}\label{p:Mather}
Let $M$ be an oriented closed surface, and $L:\Tan M\to\R$ a Tonelli Lagrangian with $e_0(L)<c_0(L)$. Then $\Graph_{c_0(L)}(L)=\Mather_0(L)$. 
\end{prop}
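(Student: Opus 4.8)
The plan is to prove the two inclusions $\Graph_{c_0(L)}(L)\subseteq\Mather_0(L)$ and $\Mather_0(L)\subseteq\Graph_{c_0(L)}(L)$ separately, each amounting to a translation between periodic orbits bounding open subsets of $M$ and invariant probability measures with vanishing rotation vector.

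For the inclusion $\Graph_{c_0(L)}(L)\subseteq\Mather_0(L)$, I would start from an arbitrary minimal boundary $\ggamma=(\gamma_1,\dots,\gamma_m)$ with energy $c_0(L)$; by Lemma~\ref{l:minimal_topological_boundaries} each component $\gamma_i:\R/p_i\Z\to M$ is a periodic orbit, so it carries the invariant probability measure $\mu_i$ defined by $\int_{\Tan M} f\,\diff\mu_i=p_i^{-1}\int_0^{p_i}f\circ\phi_L^t(\gamma_i(0),\dot\gamma_i(0))\,\diff t$. Putting $p:=p_1+\dots+p_m$ and $\mu:=p^{-1}(p_1\mu_1+\dots+p_m\mu_m)$, pairing $\mu_i$ with a closed $1$-form $\eta$ gives $\langle[\eta],\rho(\mu_i)\rangle=p_i^{-1}\langle[\eta],[\gamma_i]\rangle$, whence $\rho(\mu)=p^{-1}([\gamma_1]+\dots+[\gamma_m])=p^{-1}[\ggamma]=0$, so $\mu\in\Meas_0(L)$. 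Moreover $\int_{\Tan M}L\,\diff\mu=p^{-1}\sum_{i}\int_0^{p_i}L(\gamma_i,\dot\gamma_i)\,\diff t=p^{-1}\big(\SSS_{c_0(L)}(\ggamma)-c_0(L)\,p\big)=-c_0(L)$, where I use $\SSS_{c_0(L)}(\ggamma)=\inf_{\tb}\SSS_{c_0(L)}=0$ from Theorem~\ref{t:below_Mane}. Since $\inf_{\Meas_0(L)}\SSS=-c_0(L)$, this shows $\mu\in\Meas_{\min}(L)$, hence $\supp(\mu)\subseteq\Mather_0(L)$; as $\supp(\mu)=\bigcup_i\{(\gamma_i(t),\dot\gamma_i(t))\mid t\in\R\}$ and $\ggamma$ was arbitrary, we obtain $\Graph_{c_0(L)}(L)\subseteq\Mather_0(L)$.

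For the reverse inclusion, I would take $(q,v)\in\Mather_0(L)$, fix a minimal measure $\mu\in\Meas_{\min}(L)$ with $(q,v)\in\supp(\mu)$, and run the construction preceding Lemma~\ref{l:neaby_periodic_orbits} for this $\mu$: by Haefliger's argument $\supp(\mu)$ is foliated by periodic Euler--Lagrange orbits, so the orbit $\gamma_{(q,v)}$ through $(q,v)$ has lift contained in $\supp(\mu)\subseteq\Mather_0(L)$. Lemma~\ref{l:Mather1} then produces a topological boundary $\ggamma=(\gamma_{(q,v)},\gamma_2,\dots,\gamma_m)$ whose lift lies in $\supp(\mu)\subseteq\Mather_0(L)$, and Lemma~\ref{l:Mather2} upgrades $\ggamma$ to a minimal boundary with energy $c_0(L)$. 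Therefore $(q,v)\in\{(\gamma_{(q,v)}(t),\dot\gamma_{(q,v)}(t))\mid t\in\R\}\subseteq\Graph_{c_0(L)}(L)$, which finishes the proof.

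I do not expect a genuine obstacle, since the substantive work has been packaged into Lemmas~\ref{l:Mather1} and~\ref{l:Mather2} together with Haefliger's foliation statement; the only points demanding a bit of care are verifying that the measure $\mu$ built in the first inclusion really has zero rotation vector and action $-c_0(L)$ (which hinges on $[\ggamma]=0$ and on $\SSS_{c_0(L)}(\ggamma)=0$ from Theorem~\ref{t:below_Mane}), and observing that in the second inclusion the notation $\gamma_{(q,v)}$ and Lemmas~\ref{l:Mather1}--\ref{l:Mather2} must be applied to the specific minimal measure $\mu$ whose support contains the given point.
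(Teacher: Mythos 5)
Your proposal is correct and follows essentially the same route as the paper: the inclusion $\Graph_{c_0(L)}(L)\subseteq\Mather_0(L)$ is proved with exactly the same period-weighted invariant measure $\mu_{\ggamma}$, zero rotation vector via $[\ggamma]=0$, and action $-c_0(L)$ via $\SSS_{c_0(L)}(\ggamma)=0$, while the reverse inclusion is obtained, as in the paper, directly from Lemmas~\ref{l:Mather1} and~\ref{l:Mather2} (you merely spell out the Haefliger foliation step and the choice of the minimal measure, which the paper leaves implicit).
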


\begin{proof}
The inclusion $\Mather_0(L)\subseteq\Graph_{c_0(L)}(L)$ follows immediately from Lemmas~\ref{l:Mather1} and~\ref{l:Mather2}. Conversely, let $\ggamma=(\gamma_1,...,\gamma_m)$ be a minimal boundary with energy $c_0(L)$, and $p_1,...,p_m$ the periods of its components. From Theorem \ref{t:below_Mane} we know that $\SSS_{c_0(L)}(\ggamma)=0$. The minimal boundary $\ggamma$ defines an invariant measure $\mu_{\ggamma}\in\Meas(L)$ by
\begin{align*}
\int_{\Tan M} f\, \diff\mu_{\ggamma} = \frac{1}{p}\sum_{i=1}^m \int_0^{p_i}\! f(\gamma_i(t),\dot\gamma_i(t))\,\diff t,\quad \forall f\in C^0_{\mathrm{c}}(\Tan M),
\end{align*}
where $p:=p_1+...+p_m$. Clearly, $\mu_{\ggamma}$ has zero rotation vector and action given by 
\[
\SSS(\mu_{\ggamma}) = \frac{1}{p} \big( \SSS_{c_0(L)}(\ggamma) - p\,c_0(L) \big) = -c_0(L) = \inf_{\mu\in\Meas_0(L)}\int_{\Tan M}L\,\diff\mu,
\]
where in the last equality we used the chararacterization of $c_0(L)$ from \cite{Paternain:1997ud}. This implies that the tangent lift of $\ggamma$ is contained in the Mather set $\Mather_0(L)$, and therefore $\Graph_{c_0(L)}(L)\subseteq\Mather_0(L)$.
\end{proof}


\subsection{Locally-minimal boundaries on supercritical energies}
\label{s:just_above}

Let us quickly recall the technicalities of the functional setting of the free-period action $\SSS_e$. For our purpose, it is enough to define $\SSS_e$ over the space of $W^{1,2}$ periodic curves with arbitrary period. Indeed, it is well known that the local minimizers in this setting are precisely the local minimizers in the absolutely continuous setting. This space of curves is formally given by $\W(\R/\Z,M)\times(0,\infty)$, where a pair $(\Gamma,p)$ in this product is identified with the $p$-periodic curve $\gamma(t)=\Gamma(t/p)$, and as usual we will simply write $\gamma=(\Gamma,p)$. The functional $\SSS_e$ is lower semicontinuous on $\W(\R/\Z,M)\times(0,\infty)$, and certain $\gamma=(\Gamma,p)\in\W(\R/\Z,M)\times(0,\infty)$ may have action $\SSS_e(\gamma)=\infty$. When working on a bounded energy range $[e_1,e_2]\subset\R$, as is the case in Theorem~\ref{t:just_above}, a way to gain more regularity is to modify the Lagrangian $L$ on $E^{-1}[e_2+1,\infty)$ in order to make it fiberwise quadratic outside a compact set of the tangent bundle $\Tan M$.
A construction of such a modified Lagrangian $L'$ can be found in, e.g., \cite[Prop.~18]{Contreras:2000zz}. By \cite[Lemma~19]{Contreras:2000zz}, we have $c(L)=c(L')$. For each $e\in[e_1,e_2]$ the free-period action functional $\SSS_e'$ of $L'$ is real-valued, $C^{1,1}$, and satisfies the Palais-Smale condition on subsets of the form $\W(\R/\Z,M)\times[p_-,p_+]$. Moreover it has the same critical points and local minimizers as the original $\SSS_e$ (see Lemma~\ref{l:remark_local_minimizers} in the Appendix). For this reason, in the rest of this subsection we can assume without loss of generality that the Tonelli Lagrangian $L$ is fiberwise quadratic outside a compact set of $\Tan M$.

The following general statement, which is actually valid on any closed configuration space $M$, will be needed to show that the periodic orbits provided by Theorem~\ref{t:just_above} are simple.

\begin{lem}\label{l:simple_orbits}
Let $M$ be a closed manifold, and $L:\Tan M\to\R$ a Tonelli Lagrangian. For every simple periodic orbit $\gamma=(\Gamma,p)\in W^{1,2}(\R/\Z,M)\times(0,\infty)$ of the Lagrangian system of $L$, there exists a neighborhood $\mathcal U\subset W^{1,2}(\R/\Z,M)\times(0,\infty)$ such that any periodic orbit of the Lagrangian system of $L$ in $\mathcal U$ is simple as well.
\end{lem}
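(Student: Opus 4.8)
The plan is to argue by contradiction: suppose there is a sequence of periodic orbits $\gamma_k=(\Gamma_k,p_k)$ of the Lagrangian system of $L$ converging to the simple periodic orbit $\gamma=(\Gamma,p)$ in $W^{1,2}(\R/\Z,M)\times(0,\infty)$, such that each $\gamma_k$ has a self-intersection. Since $\gamma$ is a (non-constant, as it is simple) periodic orbit with energy $e:=E(\gamma,\dot\gamma)$, and $L$ is fiberwise quadratic outside a compact set, the $W^{1,2}$-convergence $\gamma_k\to\gamma$ together with the fact that the $\gamma_k$ are solutions of the Euler-Lagrange equation upgrades to $C^1$-convergence on compact time intervals, by elliptic bootstrap (the Euler-Lagrange flow is smooth, and the initial conditions $(\Gamma_k(0),\dot\Gamma_k(0)/p_k)$ converge to $(\Gamma(0),\dot\Gamma(0)/p)$). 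In particular $p_k\to p$ and $(\gamma_k,\dot\gamma_k)\to(\gamma,\dot\gamma)$ uniformly on $\R/p_k\Z$ after the obvious reparametrization. So the real content is purely differential-topological: a $C^1$-small perturbation of a simple closed curve that is still a flow line of a second-order flow cannot acquire a self-intersection.

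The key geometric input is that $\gamma$ being simple means $(\Gamma,p)$ is, up to the $\Z$-action of rotation, a topological embedding of $\R/p\Z$ into $M$; in particular the map $\R/p\Z\to M$ is injective, and since $\dot\gamma$ never vanishes (the orbit is non-constant and energy is constant, so it is an immersion — here one uses that an orbit through a point with zero velocity is a fixed point), $\gamma$ has a tubular neighborhood $N\cong (\R/p\Z)\times(-\delta,\delta)$ on which $\gamma$ corresponds to $(\R/p\Z)\times\{0\}$, and on which $\dot\gamma$ points ``along'' the first factor. The first step is to fix such an $N$ and a retraction $r\colon N\to\R/p\Z$. For $k$ large, $C^1$-closeness forces the image of $\gamma_k$ to lie inside $N$ and, moreover, $\dot\gamma_k$ to be everywhere transverse to the fibers $\{\theta\}\times(-\delta,\delta)$ of $r$ (since $\dot\gamma$ is). Hence $r\circ\gamma_k\colon\R/p_k\Z\to\R/p\Z$ is a $C^1$ map whose derivative never vanishes, i.e. a covering map, and by $C^1$-closeness to $r\circ\gamma=\mathrm{id}$ it has degree $1$, so it is a diffeomorphism. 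This means $\gamma_k$ can be written as a section $\theta\mapsto(\theta, f_k(\theta))$ of $N\to\R/p\Z$ after reparametrization, and such a graph is automatically embedded — contradicting the assumption that $\gamma_k$ has a self-intersection.

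The main obstacle, and the only point requiring care, is promoting the $W^{1,2}$-convergence $\gamma_k\to\gamma$ to $C^1$-convergence and, simultaneously, controlling the periods $p_k$; once that is in hand, the transversality-and-covering-degree argument is routine. The regularity upgrade is standard for Tonelli (or fiberwise-quadratic-at-infinity) Lagrangians — critical points of $\SSS_e$ in $W^{1,2}$ are smooth orbits, and a $W^{1,2}$-convergent sequence of orbits with bounded energy has locally $C^1$-convergent subsequences by continuous dependence of the Euler-Lagrange flow on initial data — but one must check that the reparametrization identifications $\gamma_k=(\Gamma_k,p_k)$ do not hide a degeneration $p_k\to 0$ or $p_k\to\infty$; this is excluded because convergence in $W^{1,2}(\R/\Z,M)\times(0,\infty)$ by definition keeps $p_k$ in a compact subset of $(0,\infty)$ and bounded away from $0$. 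With $p_k\to p>0$ and the $C^1$ bound, the tubular-neighborhood argument applies uniformly for all large $k$, completing the contradiction.
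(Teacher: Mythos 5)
Your overall strategy coincides with the paper's: argue along a sequence of non-simple periodic orbits converging to $\gamma$, upgrade the $W^{1,2}$-convergence to $C^1$-convergence, and then use that being a simple closed curve is a $C^1$-open condition (your tubular-neighborhood and covering-degree argument is a correct, more detailed version of this last step, which the paper dismisses in one line). The genuine gap is in the upgrade step, which is precisely the content of the lemma in the Sobolev setting and occupies most of the paper's proof. You assert that the initial conditions $(\Gamma_k(0),\dot\Gamma_k(0)/p_k)$ converge to $(\Gamma(0),\dot\Gamma(0)/p)$, but $W^{1,2}$-convergence only gives uniform convergence of the curves and $L^2$-convergence of the derivatives; it gives no pointwise control on $\dot\Gamma_k(0)$. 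Likewise you invoke ``a $W^{1,2}$-convergent sequence of orbits with bounded energy'', but you never verify that the energies $e_k=E(\gamma_k(0),\dot\gamma_k(0))$ are bounded: this is not automatic, and it is the first thing the paper proves (if $e_k\to\infty$ then, by conservation of energy and superlinearity, $|\dot\gamma_k|\to\infty$ everywhere, so $\length(\gamma_k)\to\infty$, contradicting $\length(\gamma_k)\to\length(\gamma)$; the lower bound on the periods enters here). With the energy bound in hand, the paper deduces $\dot\gamma_k(0)\to\dot\gamma(0)$ from the $C^0$-convergence via Fathi's short-time property that $v\mapsto\pi\circ\phi_L^t(q,v)$ is a diffeomorphism on bounded-energy sets for small $t>0$; a compactness argument (initial data in a compact subset of $\Tan M$, subsequential $C^\infty$-limits are orbits, identified with $\gamma$ by the $C^0$-limit) would also do. You identify the ``only point requiring care'' as a possible degeneration of the periods, which is in fact the trivial part, while the energy bound and the pointwise convergence of the velocities -- the actual crux -- are taken for granted.

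A smaller but genuine error: to build the tubular neighborhood you claim that $\dot\gamma$ never vanishes because ``an orbit through a point with zero velocity is a fixed point''. This is false for general Tonelli Lagrangians: $(q,0)$ is an equilibrium of the Euler--Lagrange flow only when $\partial_qL(q,0)=0$ (for $L=\tfrac12|v|^2-V$, an orbit may momentarily stop at a non-critical point of $V$ and continue). The immersion property you need is automatic when the energy of $\gamma$ exceeds $e_0(L)$, which is the situation in which the lemma is applied in the paper, but your stated justification does not give it; this hypothesis should be made explicit or derived from the energy assumption rather than from the incorrect fixed-point claim.
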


\begin{proof}
By \cite[Theorem~2.7.4]{Fathi:2008xl}, for each energy value $e>e_0(L)$ there exists $\tau=\tau(e)>0$ small enough such that, for all $q\in M$ and $t\in(0,\tau)$, the map $\pi\circ\phi_L^t|_{\Tan_q M\cap E^{-1}(-\infty,e)}$ is a diffeomorphism onto an open neighborhood of $q$. Moreover, by \cite[Theorem~3.6.1]{Fathi:2008xl}, for all $(q,v)\in M$ with $E(q,v)<e$ and $\sigma\in(0,\tau)$, the curve 
\begin{align*}
 \gamma:[0,\sigma]\to M,\qquad\gamma(t)=\pi\circ\phi_L^t(q,v)
\end{align*}
is a strict action minimizer: if $\zeta:[0,\sigma]\to M$ is any other absolutely continuous curve such that $\zeta(0)=\gamma(0)$ and $\zeta(\sigma)=\gamma(\sigma)$, then 
\begin{align}\label{e:strict}
 \int_0^\sigma L(\gamma(t),\dot\gamma(t))\,\diff t
 <
 \int_0^\sigma L(\zeta(t),\dot\zeta(t))\,\diff t.
\end{align}

Let $\gamma_n=(\Gamma_n,p_n)$ be a sequence of periodic orbits that converges to the simple one $\gamma=(\Gamma,p)$ in $W^{1,2}(\R/\Z,M)\times(0,\infty)$ as $n\to\infty$. Let $e_n:=E(\gamma_n(0),\dot\gamma_n(0))$. We claim that there exists $e\in\R$ such that $e_n<e$ for all $n\in\N$. Otherwise, up to passing to a subsequence, we would have $e_n\to\infty$. This would imply that $|\dot\gamma_n(t)|\to\infty$ for all $t\in\R$, where $|\cdot|$ denotes an arbitrary Riemannian norm. Therefore, since the periods $p_n$ are bounded away from zero, we would obtain that 
$
\length(\gamma_n) = \int_0^{p_n} |\dot\gamma_n(t)|\,\diff t\to\infty,
$
which is impossible, for $\length(\gamma_n)\to\length(\gamma)$.

Let $\tau=\tau(e)>0$ be the constant introduced above, so that for all $q\in M$ and $t\in(0,\tau)$ the map $\pi\circ\phi_L^t|_{\Tan_q M\cap E^{-1}(-\infty,e)}$ is a diffeomorphism onto an open neighborhood of $q$. Since, for all $t\in(0,\tau)$, we have that $\gamma_n(t)=\pi\circ\phi_L^t(\gamma_n(0),\dot\gamma_n(0))\to\gamma(t)$ as $n\to\infty$, we infer that $\dot\gamma_n(0)\to\dot\gamma(0)$. Therefore, $\Gamma_n$ converges to $\Gamma$ in $C^\infty(\R/\Z,M)$. Since being a simple curve is a $C^1$-open condition, we conclude that, for $n$ large enough, $\gamma_n$ is a simple curve.
\end{proof}

For the rest of this section, let $M$ be an oriented closed surface, and $L:\Tan M\to\R$ a Tonelli Lagrangian such that $e_0(L)<c_0(L)$. Let $\ggamma=(\gamma_1,...,\gamma_m)$ be an irreducible minimal boundary with energy $c_0(L)$, whose existence is guaranteed by Theorem~\ref{t:below_Mane} and Corollary~\ref{c:irreducible_minimal}. For $i=1,...,m$, we denote by $\mathcal{C}_i$ the connected component of $\W(\R/\Z,M)\times(0,\infty)$ containing $\gamma_i$. If $m>1$, Lemma~\ref{l:irreducible} implies that $\mathcal{C}_i\neq\mathcal{C}_j$ for all $i\neq j$, and none of the $\mathcal{C}_i$'s contains iterated curves.

We recall that the free-period action functional $\SSS_{c_0(L)}$ is bounded from below on every connected component of $\W(\R/\Z,M)\times(0,\infty)$, see \cite[Lemma~4.1]{Abbondandolo:2013is}. Actually,
\begin{align}
\label{e:min_action}
 \SSS_{c_0(L)}(\gamma_i) = \inf_{\mathcal{C}_i} \SSS_{c_0(L)}=:c_i.
\end{align}
Otherwise, there would exists $\gamma_i'\in\mathcal{C}_i$ with $\SSS_e(\gamma_i')<\SSS_e(\gamma_i)$; since $[\gamma_i']=[\gamma_i]$ in $\Hom_1(M;\Z)$, the multicurve $\ggamma'=(\gamma_1,...,\gamma_{i-1},\gamma_i',\gamma_{i+1},...,\gamma_m)$ would be a homological boundary with action 
\[
\SSS_{c_0(L)}(\ggamma')
=\SSS_{c_0(L)}(\ggamma)-\SSS_{c_0(L)}(\gamma_i)+\SSS_{c_0(L)}(\gamma_i')
< \SSS_{c_0(L)}(\ggamma)=0,
\]
contradicting Lemma~\ref{l:actioncrit}.

If $\ggamma$ has $m>1$ components, we define the closed subsets
\begin{align*}
\mathcal{K}_i:= \mathcal{C}_i\cap \SSS_{c_0(L)}^{-1}(c_i),
\qquad i=1,...,m,
\end{align*}
which do not contain iterated curves. If instead $\ggamma$ has only $m=1$ component, we have $c_1=0$, and the intersection $\mathcal{C}_1\cap \SSS_{c_0(L)}^{-1}(c_1)$ also contains iterated curves; in this case, we set
\begin{align*}
\mathcal{K}_1:=\big\{ \zeta\in \mathcal{C}_1\cap \SSS_{c_0(L)}^{-1}(c_1)\ \big|\ \zeta\mbox{ is not iterated}\big\}.
\end{align*}
In both cases, the subsets $\mathcal{K}_i$ are non-empty according to~\eqref{e:min_action}, contain Tonelli waists with energy $c_0(L)$, and do not contain iterated curves.
Consider a multicurve $\zzeta\in(\zeta_1,...,\zeta_m)\in\mathcal{K}_1\times...\times\mathcal{K}_m$. Since every component $\zeta_i$ has the same homology of the corresponding component $\gamma_i$, $\zzeta$ is a homological boundary. Since it has minimal action $\SSS_{c_0(L)}(\zzeta)=\SSS_{c_0(L)}(\ggamma)=0$, its lift is contained in the Mather set $\Mather_0(L)$. Notice that $\zeta_i\neq\zeta_j$ for all $i\neq j$, since $\mathcal{K}_i\cap\mathcal{K}_j=\varnothing$. Moreover, Mather's graph Theorem implies that the components $\zeta_j$'s have pairwise disjoint image and are simple periodic orbits, that is, $\zzeta$ is an embedded homological boundary with action $\SSS_{c_0(L)}(\zzeta)=0$.

\begin{lem}
Each space $\mathcal{K}_i$ is compact.
\end{lem}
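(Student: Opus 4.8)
The plan is to prove that $\mathcal{K}_i$ is sequentially compact, which suffices because $\W(\R/\Z,M)\times(0,\infty)$ is metrizable. Fix an arbitrary sequence $\zeta^{(n)}=(\Gamma^{(n)},p^{(n)})$ in $\mathcal{K}_i$. As recalled in the discussion above, every $\zeta^{(n)}$ is a simple, non-iterated periodic orbit of $L$ whose tangent lift $t\mapsto(\zeta^{(n)}(t),\dot\zeta^{(n)}(t))$ is contained in the Mather set $\Mather_0(L)$; recall that $\Mather_0(L)$ is compact, invariant under the Euler--Lagrange flow $\phi_L^t$, and contained in $E^{-1}(c_0(L))$.

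First I would establish uniform two-sided bounds on the periods $p^{(n)}$. For the lower bound, observe that $\Mather_0(L)$ contains no rest point of $\phi_L^t$: a rest point has the form $(q,0)$ with $E(q,0)\le e_0(L)<c_0(L)$, whereas $\Mather_0(L)\subset E^{-1}(c_0(L))$. Being compact and invariant, $\Mather_0(L)$ therefore admits $p_{\min}>0$ such that every periodic orbit whose lift lies in $\Mather_0(L)$ has minimal period at least $p_{\min}$; hence $p^{(n)}\ge p_{\min}$. For the upper bound, replace in the irreducible minimal boundary $\ggamma$ its $i$-th component $\gamma_i$ by $\zeta^{(n)}$, obtaining a multicurve $\tilde{\zzeta}^{(n)}$. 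Since $\zeta^{(n)}\in\mathcal{K}_i$ and, by~\eqref{e:min_action}, $\gamma_j\in\mathcal{K}_j$ for $j\ne i$, the discussion above shows that $\tilde{\zzeta}^{(n)}$ is an embedded homological boundary with $\SSS_{c_0(L)}(\tilde{\zzeta}^{(n)})=\SSS_{c_0(L)}(\ggamma)=0$. By Lemma~\ref{l:boundaries}, $\tilde{\zzeta}^{(n)}$ is a disjoint union of topological boundaries, each of which has non-negative action by Lemma~\ref{l:actioncrit}; thus the one containing $\zeta^{(n)}$ has action $\le 0$, and the period estimate~\eqref{e:period_bound} applied to it gives $p^{(n)}\le p_{\max}:=\int_M|\diff\theta|\,/\,(c_0(L)-e_0(L))$.

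Then I would extract a convergent subsequence. Since $p^{(n)}\in[p_{\min},p_{\max}]$ and the initial conditions $(\zeta^{(n)}(0),\dot\zeta^{(n)}(0))$ lie in the compact set $\Mather_0(L)$, after passing to a subsequence $p^{(n)}\to p_\infty\in[p_{\min},p_{\max}]$ and $(\zeta^{(n)}(0),\dot\zeta^{(n)}(0))\to(q_\infty,v_\infty)\in\Mather_0(L)$. Writing $\zeta^{(n)}(t)=\pi\circ\phi_L^t(\zeta^{(n)}(0),\dot\zeta^{(n)}(0))$ and using the smoothness and joint continuity of the flow, one passes to the limit in $\phi_L^{p^{(n)}}(\zeta^{(n)}(0),\dot\zeta^{(n)}(0))=(\zeta^{(n)}(0),\dot\zeta^{(n)}(0))$ to get $\phi_L^{p_\infty}(q_\infty,v_\infty)=(q_\infty,v_\infty)$; hence $\zeta_\infty(t):=\pi\circ\phi_L^t(q_\infty,v_\infty)$ is a $p_\infty$-periodic orbit, and, with $\Gamma_\infty(s):=\zeta_\infty(p_\infty s)$, one has $\Gamma^{(n)}\to\Gamma_\infty$ in $C^\infty(\R/\Z,M)$, so that $\zeta^{(n)}\to\zeta_\infty=(\Gamma_\infty,p_\infty)$ in $\W(\R/\Z,M)\times(0,\infty)$. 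The connected component $\mathcal{C}_i$ is open and closed, so $\zeta_\infty\in\mathcal{C}_i$, and lower semicontinuity of $\SSS_{c_0(L)}$ together with~\eqref{e:min_action} yields $c_i\le\SSS_{c_0(L)}(\zeta_\infty)\le\liminf_n\SSS_{c_0(L)}(\zeta^{(n)})=c_i$, so $\SSS_{c_0(L)}(\zeta_\infty)=c_i$. Finally $\zeta_\infty$ is non-iterated: its lift $\{\phi_L^t(q_\infty,v_\infty):t\in\R\}\subset\Mather_0(L)$ by invariance, hence is embedded by Mather's graph theorem, so $\zeta_\infty$ is a simple curve. Therefore $\zeta_\infty\in\mathcal{K}_i$, and $\mathcal{K}_i$ is compact.

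The step that I expect to require the most care is the uniform upper bound on the periods, because the individual elements of $\mathcal{K}_i$ need not be topological boundaries and~\eqref{e:period_bound} cannot be invoked for them directly; the device of re-inserting $\zeta^{(n)}$ into $\ggamma$ and applying Lemmas~\ref{l:boundaries} and~\ref{l:actioncrit} circumvents this. The remaining arguments are the routine compactness arguments for periodic orbits of a Tonelli flow of bounded energy and bounded period.
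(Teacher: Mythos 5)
Your route (sequential compactness via uniform period bounds) is genuinely different from the paper's, which first proves that each $\mathcal{K}_i$ is closed and then uses that the evaluation map $\zeta\mapsto(\zeta(0),\dot\zeta(0))$ identifies $\mathcal{K}_1\cup\ldots\cup\mathcal{K}_m$ with a subset of the compact level $E^{-1}(c_0(L))$. Most of your argument is sound: the lower period bound from the absence of rest points in the compact invariant set $\Mather_0(L)$, and the upper bound obtained by re-inserting $\zeta^{(n)}$ into $\ggamma$ and invoking Lemmas~\ref{l:boundaries} and~\ref{l:actioncrit} together with~\eqref{e:period_bound}, are both correct, as is the extraction of a $C^\infty$-convergent subsequence and the identification of the limiting action. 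The gap is in the very last step, where you deduce from Mather's graph theorem that $\zeta_\infty$ is non-iterated. The graph theorem only tells you that the \emph{image} of the limit orbit is an embedded circle; it says nothing about $p_\infty$ being the \emph{minimal} period of $\zeta_\infty$. Minimal periods can drop in the limit, so a priori $p_\infty=k\,q$ with $q$ the minimal period of the orbit through $(q_\infty,v_\infty)$ and $k\geq 2$, in which case $\zeta_\infty=(\Gamma_\infty,p_\infty)$ is an iterated curve and hence does \emph{not} belong to $\mathcal{K}_i$. Your argument as written does not exclude this, and this is precisely the delicate point that the paper's proof addresses.

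The issue splits according to $m$. If $m>1$ it is harmless, because (as observed before the lemma, via Lemma~\ref{l:irreducible}) the classes $[\gamma_i]$ are non-zero and primitive, so the components $\mathcal{C}_i$ contain no iterated curves at all, and $\zeta_\infty\in\mathcal{C}_i$ is automatically non-iterated; but this is the fact you should invoke, not the graph theorem. If $m=1$ and $\gamma_1$ is contractible, then $\mathcal{C}_1\cap\SSS_{c_0(L)}^{-1}(0)$ does contain iterated curves, and a genuine extra argument is needed; the paper uses the fact that on an oriented closed surface a loop which is a limit of simple loops cannot be an iterated loop. In your setting this is easy to supply, since you already have $C^\infty$-convergence: choose an embedded annular neighborhood $A$ of the circle $\zeta_\infty(\R)$; for $n$ large the simple curve $\zeta^{(n)}$ lies in $A$ and is homotopic in $A$ to $\Gamma_\infty$, i.e.\ to the $k$-fold cover of the core of $A$, while an embedded loop in an annulus is either contractible or isotopic to the core, forcing $k=1$. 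With this step added, your proof of compactness is complete.
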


\begin{proof}
If $m>1$, we already know that the $\mathcal{K}_i$'s are closed. If $m=1$, we claim that $\mathcal{K}_1$ is closed as well; indeed, on an oriented closed surface, a loop that is the limit of simple loops cannot be an iterated loop; therefore, the closure of $\mathcal{K}_1$ is disjoint from the complement $\mathcal{C}_1\cap \SSS_{c_0(L)}^{-1}(c_1)\setminus\mathcal{K}_1$; since $\mathcal{C}_1\cap \SSS_{c_0(L)}^{-1}(c_1)$ is closed, we infer that $\mathcal{K}_1$ is closed as well\footnote{Actually, if $\gamma_1$ is not contractible, $\mathcal{C}_1$ does not even contain iterated loops, for a non-contractible simple loop  on an oriented closed surface is never freely homotopic to an iterated curve, see e.g. \cite[Proposition~1.4]{Farb:2012rc})}.  
The evaluation map
\begin{align*}
\mathrm{ev}:\mathcal{K}_1\cup...\cup\mathcal{K}_m\to E^{-1}(c_0(L)),
\qquad
\mathrm{ev}(\zeta)=(\zeta(0),\dot\zeta(0))
\end{align*}
is a homeomorphism onto its image. Since the energy level $E^{-1}(c_0(L))$ is compact, we conclude that the closed sets $\mathcal{K}_i$ are compact as well.
\end{proof}

We fix real numbers $p_-,p_+\in(0,\infty)$ such that
\begin{align*}
 p_- & < \min\big\{p\ \big|\ \theta=(\Theta,p)\in\mathcal{K}_1\cup...\cup\mathcal{K}_m\big\},\\
 p_+ & > \max\big\{p\ \big|\ \theta=(\Theta,p)\in\mathcal{K}_1\cup...\cup\mathcal{K}_m\big\}.
\end{align*}
The compactness of $\mathcal{K}_1\cup...\cup\mathcal{K}_m$, together with Lemma~\ref{l:simple_orbits}, implies that each $\mathcal{K}_i$ has an open neighborhood $\mathcal{U}_i\subset \W(\R/\Z,M)\times[p_-,p_+]$ such that all the periodic orbits of the Lagrangian system of $L$ contained in $\mathcal{U}_i$ are simple. Since every multicurve $\zzeta=(\zeta_1,...,\zeta_m)\in\mathcal{K}_1\times...\times\mathcal{K}_m$ is embedded, up to further shrinking the $\mathcal{U}_i$'s, all curves $\zeta\in\mathcal{U}_i$ and $\eta\in\mathcal{U}_j$ have disjoint image provided $i\neq j$.

\begin{lem}\label{l:nbhd}
There exist $\epsilon>0$ and arbitrarily small neighborhoods $\mathcal{W}_i\subseteq \mathcal{U}_i$ of $\mathcal{K}_i$ such that
\begin{align*}
 \inf_{\partial\mathcal{W}_i} \SSS_e > \inf_{\mathcal{W}_i} \SSS_e,
 \qquad
 \forall e\in[c_0(L),c_0(L)+\epsilon).
\end{align*}
\end{lem}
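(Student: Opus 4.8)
The plan is to prove the inequality first at the single energy $e=c_0(L)$, in the sharp form ``low-action curves near $\mathcal{K}_i$ genuinely cluster near $\mathcal{K}_i$'', and then to propagate it to $e$ slightly above $c_0(L)$ by the elementary identity $\SSS_e(\gamma)=\SSS_{c_0(L)}(\gamma)+(e-c_0(L))\,p$ for $\gamma=(\Gamma,p)$, together with the a priori period bound $p\le p_+$ that holds on $\mathcal{U}_i\subset\W(\R/\Z,M)\times[p_-,p_+]$. A few preliminary reductions are harmless: since the connected component $\mathcal{C}_i$ is open and closed in $\W(\R/\Z,M)\times(0,\infty)$, we may intersect $\mathcal{U}_i$ with $\mathcal{C}_i$ and assume $\mathcal{U}_i\subseteq\mathcal{C}_i$, so that $\SSS_{c_0(L)}\geq c_i$ on $\mathcal{U}_i$ by~\eqref{e:min_action}; recall also that, after the modification of $L$ described at the beginning of this subsection, $\SSS_e$ is a $C^{1,1}$ functional satisfying the Palais--Smale condition on sets of the form $\W(\R/\Z,M)\times[p_-',p_+']$.

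The heart of the argument is the following claim at energy $c_0(L)$: for every open set $\mathcal{V}$ with $\mathcal{K}_i\subseteq\mathcal{V}\subseteq\mathcal{U}_i$ there is $\rho=\rho(\mathcal{V})>0$ with $\SSS_{c_0(L)}\geq c_i+\rho$ on $\mathcal{U}_i\setminus\mathcal{V}$. I would prove this by contradiction: a sequence $\zeta_n\in\mathcal{U}_i\setminus\mathcal{V}$ with $\SSS_{c_0(L)}(\zeta_n)\to c_i$ is a minimizing sequence (the infimum over $\mathcal{U}_i\setminus\mathcal{V}$ is $\geq c_i$ because $\mathcal{U}_i\subseteq\mathcal{C}_i$), so Ekeland's variational principle produces a Palais--Smale sequence $\tilde\zeta_n$ at level $c_i$ with $\tilde\zeta_n$ arbitrarily close to $\zeta_n$; by the Palais--Smale condition (applied on a slightly enlarged period interval) a subsequence converges to a critical point $\zeta_\infty$, which is thus a periodic orbit of the Lagrangian system of $L$ with energy $c_0(L)$ and $\SSS_{c_0(L)}(\zeta_\infty)=c_i$. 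Since $\mathcal{C}_i$ is closed and $\zeta_n\to\zeta_\infty$ as well, we get $\zeta_\infty\in\mathcal{C}_i$, so $\zeta_\infty$ is a global minimizer of $\SSS_{c_0(L)}|_{\mathcal{C}_i}$. If $m>1$ — or $m=1$ with $\gamma_1$ non-contractible — this forces $\zeta_\infty\in\mathcal{K}_i$ by the very definition of $\mathcal{K}_i$. The one delicate point is the case $m=1$ with $\gamma_1$ contractible, where $\mathcal{C}_1$ contains iterated curves: there one notes $c_1=0$, so the invariant probability measure carried by $\zeta_\infty$ has zero rotation vector and action $-c_0(L)=\inf_{\Meas_0(L)}\SSS$; hence its lift lies in $\Mather_0(L)$, Mather's graph theorem makes $\zeta_\infty$ simple, and in particular non-iterated, so $\zeta_\infty\in\mathcal{K}_1$. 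In all cases $\zeta_\infty\in\mathcal{K}_i\subseteq\mathcal{V}$, and since $\mathcal{V}$ is open this gives $\zeta_n\in\mathcal{V}$ for large $n$, a contradiction. This compactness step — upgrading a minimizing sequence to a convergent one via Ekeland plus Palais--Smale, and recognizing its limit as an element of $\mathcal{K}_i$ — is the main obstacle; everything else is soft.

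To conclude, given any prescribed neighborhood of $\mathcal{K}_i$, I would pick (using normality of the metric space $\W(\R/\Z,M)\times[p_-,p_+]$) an open $\mathcal{W}_i$ contained in it with $\mathcal{K}_i\subseteq\mathcal{W}_i\subseteq\overline{\mathcal{W}_i}\subseteq\mathcal{U}_i$, let $\rho_i:=\rho(\mathcal{W}_i)>0$ be given by the claim, and set $\epsilon:=\min_i\rho_i/(2p_+)>0$. For $e\in[c_0(L),c_0(L)+\epsilon)$ one has $\SSS_e\geq\SSS_{c_0(L)}$ pointwise (periods are positive), and $\partial\mathcal{W}_i\subseteq\overline{\mathcal{W}_i}\setminus\mathcal{W}_i\subseteq\mathcal{U}_i\setminus\mathcal{W}_i$, so $\inf_{\partial\mathcal{W}_i}\SSS_e\geq c_i+\rho_i$; on the other hand, evaluating at any $\zeta\in\mathcal{K}_i\subseteq\mathcal{W}_i$ gives $\inf_{\mathcal{W}_i}\SSS_e\leq\SSS_e(\zeta)=c_i+(e-c_0(L))\,p(\zeta)\leq c_i+\epsilon\,p_+\leq c_i+\rho_i/2<c_i+\rho_i\leq\inf_{\partial\mathcal{W}_i}\SSS_e$. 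This yields the required strict inequality, with $\epsilon$ and the neighborhoods chosen uniformly in $i$, and with $\mathcal{W}_i$ as small as desired.
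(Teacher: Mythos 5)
Your overall scheme (a uniform action gap at the single energy $c_0(L)$, propagated to $e\in[c_0(L),c_0(L)+\epsilon)$ via $\SSS_e=\SSS_{c_0(L)}+(e-c_0(L))p$ and $p\leq p_+$) is sound, and the final assembly is fine. The gap is in the key compactness claim, precisely in the case you yourself flag as delicate: $m=1$ with $\gamma_1$ contractible. There, the limit $\zeta_\infty$ produced by Ekeland plus Palais--Smale need not lie in $\mathcal{K}_1$, and the criterion you invoke cannot show that it does. Indeed, for any $\kappa\in\mathcal{K}_1$ the iterate $\kappa^2$ is contractible, is a critical point of $\SSS_{c_0(L)}$ with $\SSS_{c_0(L)}(\kappa^2)=0=c_1$, its invariant probability measure has zero rotation vector and action $-c_0(L)$, and its lift is contained in $\Mather_0(L)$; yet $\kappa^2$ is iterated and hence excluded from $\mathcal{K}_1$. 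Mather's graph theorem only says that the projection restricted to the Mather set is injective, i.e., it controls the \emph{image} of the orbit in $M$; it says nothing about the multiplicity of the parametrization, so ``lift in the Mather set $\Rightarrow$ simple as a parametrized loop $\Rightarrow$ non-iterated'' is not a valid inference. Nor can you appeal to the defining property of $\mathcal{U}_1$ (all periodic orbits \emph{in} $\mathcal{U}_1$ are simple), because your limit is only known to lie in $\overline{\mathcal{U}_1}$, possibly on $\partial\mathcal{U}_1$, where that property gives nothing; with $\mathcal{U}_1$ only as specified in the paper's setup, $\overline{\mathcal{U}_1}$ could even contain such an iterated minimizer, in which case your intermediate claim ($\SSS_{c_0(L)}\geq c_1+\rho$ on $\mathcal{U}_1\setminus\mathcal{V}$ for \emph{every} open $\mathcal{V}\supseteq\mathcal{K}_1$) can fail outright, not just its proof. (A smaller imprecision of the same flavor: for $m=1$ with $\gamma_1$ non-contractible, membership in $\mathcal{K}_1$ is not ``by definition'' alone; you also need the fact, recorded in the paper's footnote, that $\mathcal{C}_1$ then contains no iterated loops.)

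For comparison, the paper avoids identifying limits of minimizing sequences altogether: it works with small metric balls $\mathcal{V}_r\subset\mathcal{V}_{3r}\subseteq\mathcal{U}_i$ around the compact set $\mathcal{K}_i$, uses Palais--Smale only to get a positive lower bound $\delta$ for $\|\nabla\SSS_{c_0(L)}\|$ on the low sublevel inside the annulus $\mathcal{V}_{3r}\setminus\mathcal{V}_r$ (possible because, the annulus being inside $\mathcal{U}_i$ and disjoint from $\mathcal{K}_i$, it carries no critical points at level $c_i$), and then pushes a hypothetical low-action point of $\partial\mathcal{V}_{2r}$ across the annulus with the anti-gradient flow, forcing its action strictly below $c_i=\inf_{\mathcal{C}_i}\SSS_{c_0(L)}$, a contradiction; the iterate problem never arises because everything happens well inside $\mathcal{U}_i$. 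Your argument can be repaired in the same spirit: before running Ekeland, replace $\mathcal{U}_1$ by a small closed metric neighborhood of $\mathcal{K}_1$ whose interior still has the simplicity property and which stays at positive distance from the set of iterated critical points at level $0$ with period in $[p_-,p_+]$ (this distance is positive by a Palais--Smale compactness argument together with the fact, used in the paper to show $\mathcal{K}_1$ is closed, that on an oriented surface an iterated loop cannot be a limit of simple loops); with that modification your limit $\zeta_\infty$ does land in $\mathcal{K}_1$ and the rest of your proof goes through.
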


\begin{proof}
We fix $i\in\{1,...,m\}$ once for all. Let $d$ be the distance on $\W(\R/\Z,M)\times(0,\infty)$ induced by its standard Riemannian metric. For each $r>0$, we denote by $\VV_r$ the open neighborhood of $\mathcal{K}_i$ of radius $r$, that is
\begin{align*}
\VV_r
=
\big\{
\gamma\in\W(\R/\Z,M)\times(0,\infty)\ 
\big|\ 
d(\gamma,\zeta)<r\mbox{ for some }\zeta\in\mathcal{K}_i
\big\}.
\end{align*}
We choose $r>0$ small enough so that $\VV_{3r}\subseteq\UU_i$, and the gradient norm $\|\nabla\SSS_{c_0(L)}\|$ is bounded from above on $\VV_{3r}$. We set $\mathcal{K}_i'$ to be the set of critical points of $\SSS_{c_0(L)}$ contained in the closure of $\VV_{3r}\setminus\VV_r$. Since $\SSS_{c_0(L)}$ satisfies the Palais-Smale condition on $\W(\R/\Z,M)\times[p_-,p_+]$, we have that
\begin{align*}
c':=\inf \SSS_{c_0(L)}|_{\mathcal{K}_i'}>c_i.
\end{align*}
We fix $c''\in(c_i,c')$.  The closure of $\{\SSS_{c_0(L)}<c''\} \cap (\VV_{3r}\setminus\VV_r)$ does not contain any critical point of $\SSS_{c_0(L)}$ and, again by the Palais-Smale condition, there exists $\delta>0$ such that $\|\nabla\SSS_{c_0(L)}\|\geq\delta$ on $\{\SSS_{c_0(L)}<c''\} \cap (\VV_{3r}\setminus\VV_r)$.

We set $\WW_i:=\VV_{2r}$, and we claim that 
\[\inf \SSS_{c_0(L)}|_{\partial\WW_i} > \inf \SSS_{c_0(L)}|_{\WW_i}=c_i.\] Indeed, assume by contradiction that there exists a sequence $\{\gamma_j\ |\ j\in\N\}\subset\partial\WW_i$ with $\SSS_{c_0(L)}(\gamma_j)<c_i+1/j$. Fix $0<b<\min\{c''-c_i,\delta r\}$ and $j>1/b$. Notice that the curve $\gamma_j$ is contained in the sublevel set $\{\SSS_{c_0(L)}<c''\}$. Consider the anti-gradient flow $\Phi_s$ of the functional $\SSS_{c_0(L)}$, that is, the partial flow defined by the ordinary differential equation 
\[\tfrac{\diff}{\diff s}\Phi_s=-\nabla\SSS_{c_0(L)}\circ\Phi_s.\] 
Since $\|\nabla\SSS_{c_0(L)}\|$ is bounded from above on $\VV_{3r}$, every orbit $s\mapsto\Phi_s(\gamma)$ that does not stay inside $\VV_{3r}$ for all $s>0$ will eventually hit the boundary of $\VV_{3r}$. Let $s_{\mathrm{out}}\in(0,\infty]$ be the largest real number such that 
\begin{align*}
 \Phi_s(\gamma_j)\in \overline{\VV_{3r}\setminus\VV_r},
 \qquad
 \forall s\in(0,s_{\mathrm{out}}).
\end{align*}
We must have $s_{\mathrm{out}}< b/\delta^2<\infty$, for
\begin{align*}
c_i
< 
\SSS_{c_0(L)}(\Phi_{s_{\mathrm{out}}}(\gamma_j))
=
\SSS_{c_0(L)}(\gamma_j)
-
\int_0^{s_{\mathrm{out}}} \|\nabla\SSS_{c_0(L)}(\Phi_s(\gamma_j))\|^2
<
c_i+b - \delta^2 s_{\mathrm{out}}.
\end{align*}
However, since on the time interval $[0,s_{\mathrm{out}}]$ the curve $s\mapsto\Phi_s(\gamma_j)$ crosses a region of width $r$, we have the estimate
\begin{align*}
\SSS_{c_0(L)}(\Phi_{s_{\mathrm{out}}}(\gamma_j))
& =
\SSS_{c_0(L)}(\gamma_j)
-
\int_0^{s_{\mathrm{out}}} \|\nabla\SSS_{c_0(L)}(\Phi_s(\gamma_j))\|^2 \diff s \\
& < c_i+ b - \delta  \int_0^{s_{\mathrm{out}}} \|\partial_s\Phi_s(\gamma_j)\| \,\diff s \\
& < c_i+b- \delta r\\ 
& < c_i,
\end{align*}
which contradicts the fact that  $\inf \SSS_{c_0(L)}|_{\mathcal{C}_i}=c_i$. This completes the proof of the lemma for $e=c_0(L)$. The general statement follows from this, by observing that \[\SSS_e(\Gamma,p)=\SSS_{c_0(L)}(\Gamma,p) + (e-c_0(L))p. \qedhere\]
\end{proof}

\begin{proof}[Proof of Theorem~\ref{t:just_above}]
Consider the quantity $\epsilon>0$ and the open neighborhoods $\WW_i$ provided by Lemma~\ref{l:nbhd}. Fix $e\in(c_0(L),c_0(L)+\epsilon)$. It is well known that $\SSS_e$ satisfies the Palais-Smale condition on subsets of the form $\W(\R/\Z,M)\times[p_-,p_+]$, see \cite[Lemma~5.3]{Abbondandolo:2013is}. Therefore, for each $i=1,...,m$, there exists a minimizer $\zeta_i$ of $\SSS_e|_{\mathcal{W}_i}$, which is a simple Tonelli waist with energy $e$. By Lemma~\ref{l:boundaries}, the embedded homological boundary $\zzeta=(\zeta_1,...,\zeta_m)$ is a union of finitely many topological boundaries.
\end{proof}

\begin{proof}[Proof of Theorem~\ref{t:multiplicity_just_above}]
Let $M$ be the two-sphere and consider an arbitrary energy level $e\in(c(L),\cw(L))$. Since we are looking for infinitely many periodic orbits with energy $e$, we can assume that the set of critical points $\crit(\SSS_e)$ is a collection of isolated critical circles. Let $\gamma$ be a simple Tonelli waist with energy $e$, which exists by Theorem~\ref{t:just_above}. For each $m\in\N$, we introduce the space of paths
\begin{align*}
\PP_m:=
\big\{
P:[0,1]\toup^{C^0}\W(\R/\Z, S^2)\times(0,\infty)
\ \big|\ 
P(0)\in\gamma,\ P(1)\in\gamma^{m}
\big\},
\end{align*}
and the minimax value
\begin{align*}
s_m
:=
\inf_{P\in \PP_m}
\max_{s\in[0,1]}
\SSS_e(P(s)).
\end{align*}
Since the two-sphere is an oriented surface, every iterate of $\gamma$ is still a local minimizer of $\SSS_e$, see \cite[Lemma~4.1]{Abbondandolo:2015lt}. Therefore \begin{align*}
s_m > \SSS_e(\gamma^m)=m\,\SSS_e(\gamma).
\end{align*}
As $e>c_0(L)$, the free-period action functional $\SSS_e$ is strictly positive and satisfies the Palais-Smale condition at every level on the whole $\W(\R/\Z,S^2)\times(0,\infty)$, see \cite[Lemmas~5.1--5.4]{Abbondandolo:2013is}. Therefore, $s_m$ is a critical value of the free-period action functional $\SSS_e$ and $s_m\to+\infty$ as $m\to+\infty$. Actually, a standard deformation argument from critical point theory allows us to find, for each arbitrarily small neighborhood $\WW\subset \W(\R/\Z,S^2)\times(0,\infty)$ of the set of critical points $\crit(\SSS_e)\cap\SSS_e^{-1}(s_m)$, a path $P\in\PP_m$ such that 
\begin{align}\label{e:almost_optimal_path}
P([0,1])\subset \{\SSS_e<s_m\}\cup\WW.
\end{align}
	
Assume now by contradiction that the Lagrangian system of $L$ admits only finitely many non-iterated periodic orbits $\gamma_1,...,\gamma_k$ with energy $e$. In particular, every such periodic orbit $\gamma_i$ lies on an isolated critical circle $\mathcal Z_i$ of $\SSS_e$. Let us recall the non-mountain pass Theorem for high iterates, which was originally proved in \cite[Theorem 2.6]{Abbondandolo:2014rb} for magnetic Lagrangians, and extended in \cite[Lemma 4.3 and proof of Theorem 1.2]{Asselle:2016qv} to the case of general Tonelli Lagrangians: there exist constants $\overline m_i\in\N$ and, for all $m\geq \overline m_i$, arbitrarily small open neighborhoods $\WW_{i,m}$ of $\mathcal Z_i^m:=\{\zeta^m\ |\ \zeta\in \mathcal Z_i\}$ such that the inclusion induces an injective map between path-connected components
\begin{align}\label{e:non_mountain_pass}
\pi_0(\{\SSS_e<\SSS_e(\gamma_i^m)\})
\hookrightarrow
\pi_0(\{\SSS_e<\SSS_e(\gamma_i^m)\}\cup\WW_{i,m}).
\end{align}
Consider a large enough $m\in\N$ such that $s_m\geq\max\big\{\SSS_e(\gamma_i^{\overline{m}_i})\ \big|\ i=1,...,k\big\}$. The set of critical points $\crit(\SSS_e)\cap\SSS_e^{-1}(s_m)$ is comprised of finitely many critical circles
\begin{align*}
\crit(\SSS_e)\cap\SSS_e^{-1}(s_m)
=
\mathcal Z_{i_1}^{m_1} \cup ... \cup \mathcal Z_{i_k}^{m_k},
\end{align*}
where $m_j\geq\overline m_{i_j}$ for all $j=1,...,k$. We choose the neighborhoods $\WW_{i_j,m_j}$ of $\mathcal Z_{i_j}^{m_j}$ small enough so that they are pairwise disjoint. We set
\begin{align*}
\WW
:=
\WW_{i_1,m_1} \cup ... \cup \WW_{i_k,m_k},
\end{align*}
and we choose a path $P\in\PP_m$ satisfying~\eqref{e:almost_optimal_path}. By~\eqref{e:non_mountain_pass}, we can modify $P$ in order to obtain a new path $Q\in\PP_m$ such that $Q([0,1])\subset\{\SSS_e<s_m\}$. This contradicts the definition of the minimax value $s_m$.
\end{proof}


\section{The non-orientable case}\label{s:non_orientable}

Let $M$ be a closed manifold, and $L$ a Tonelli Lagrangian. For a given  cover $M'\to M$, we denote by $L':\Tan M'\to \R$ the lift of $L$. Clearly $e_0(L')=e_0(L)$. Let $M_0\to M$ and $M_0'\to M'$ be the universal abelian covers, which have fundamental groups 
\[\pi_1(M_0)=[\pi_1(M),\pi_1(M)],
\qquad
\pi_1(M_0')=[\pi_1(M'),\pi_1(M')].\] 
Since $\pi_1(M_0')$ is a subgroup of $\pi_1(M_0)$, $M_0'$ is a cover of $M_0$, and in particular 
\begin{align}\label{e:inequality_c0}
c_0(L')\leq c_0(L).
\end{align}
Moreover, we have the following statement.

\begin{lem}\label{l:c0}
Let $M$ be a closed manifold, $L:\Tan M\to\R$ a Tonelli Lagrangian, and $L':\Tan M'\to\R$ its lift to the tangent bundle of a finite cover $M'$ of $M$. Then
\[c_0(L')= c_0(L).\]
\end{lem}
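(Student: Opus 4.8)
The plan is to establish the reverse inequality $c_0(L')\ge c_0(L)$, since $c_0(L')\le c_0(L)$ is already recorded in~\eqref{e:inequality_c0}. I would start from the standard description of the Ma\~n\'e critical value of the universal abelian cover: a periodic curve in $M_0$ projects precisely to a null-homologous periodic curve in $M$, and the lift of $L$ to $\Tan M_0$ has the same action, so $c_0(L)$ equals the smallest $e\in\R$ for which $\SSS_e\ge 0$ on the space of absolutely continuous null-homologous periodic curves in $M$; likewise for $c_0(L')$ with $M'$ in place of $M$. Thus it suffices to show that for every $e<c_0(L)$ there is a null-homologous periodic curve in $M'$ on which $\SSS_e$ is negative.

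So fix $e<c_0(L)$, and take a null-homologous periodic curve $\gamma:\R/T\Z\to M$ with $\SSS_e(\gamma)<0$, which exists by the description above. Let $d\in\N$ be the degree of $\pi:M'\to M$. The key step is a transfer construction. The pullback covering $\gamma^*M'\to\R/T\Z$ is a $d$-fold covering of a circle, hence a disjoint union of circles $C_1\sqcup\dots\sqcup C_r$, where $C_j\to\R/T\Z$ has degree $k_j\ge1$ and $\sum_{j=1}^r k_j=d$. Composing $C_j\hookrightarrow\gamma^*M'\to M'$ with a degree-one parametrization of $C_j$ yields periodic curves $\gamma_j':\R/k_jT\Z\to M'$ lifting the $k_j$-th iterate of $\gamma$; since $L'(q',v')=L(\pi(q'),\diff\pi(v'))$, one gets $\SSS_e(\gamma_j')=k_j\,\SSS_e(\gamma)$, so the multicurve $\ggamma':=(\gamma_1',\dots,\gamma_r')$ has total action
\begin{align*}
\SSS_e(\ggamma')=\sum_{j=1}^r k_j\,\SSS_e(\gamma)=d\,\SSS_e(\gamma)<0.
\end{align*}

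Next I would verify that $\ggamma'$ is a homological boundary in $M'$. Writing $\gamma=\partial c$ for a singular $2$-chain $c$ in $M$ (possible since $[\gamma]=0$ in $\Hom_1(M;\Z)$) and using that every singular simplex in $M$ has exactly $d$ lifts to $M'$, the transfer $2$-chain $\mathrm{tr}(c)$ obtained by summing all lifts of all simplices of $c$ satisfies $\partial\,\mathrm{tr}(c)=\mathrm{tr}(\partial c)=\mathrm{tr}(\gamma)$; and $\mathrm{tr}(\gamma)$ is a $1$-cycle supported on $\pi^{-1}(\gamma(\R/T\Z))=\bigcup_j\gamma_j'(\R/k_jT\Z)$ which represents $[\gamma_1']+\dots+[\gamma_r']$, so this sum vanishes in $\Hom_1(M';\Z)$. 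Finally, if $e\ge c_0(L')$ then, writing $\gamma_j':\R/p_j\Z\to M'$,
\begin{align*}
\SSS_{c_0(L')}(\ggamma')=\SSS_e(\ggamma')-(e-c_0(L'))\sum_{j=1}^r p_j\le\SSS_e(\ggamma')<0,
\end{align*}
contradicting Lemma~\ref{l:actioncrit} applied to the Tonelli Lagrangian $L'$ on the closed manifold $M'$. Hence $e<c_0(L')$, and since $e<c_0(L)$ was arbitrary, $c_0(L')\ge c_0(L)$. The only non-formal point is the transfer step — making precise that the full preimage of a null-homologous loop under a finite cover is a null-homologous multicurve whose action scales by the degree — but this is elementary and uses finiteness of the cover in an essential way, consistently with the fact that $c_0$ can drop strictly (e.g.\ to $\cu(L)$) on infinite covers.
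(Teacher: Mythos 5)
Your proof is correct and follows essentially the same route as the paper: transfer the negative-action null-homologous curve to the finite cover to obtain a homological boundary in $M'$ whose action is $d$ times the original, and then contradict Lemma~\ref{l:actioncrit} applied to $L'$. The only (minor) difference is bookkeeping: you decompose the preimage via the pullback covering of the circle into loops of degrees $k_j$ with $\sum_j k_j=d$, which handles non-normal covers uniformly, whereas the paper first passes to the $m$-th iterate of $\gamma$ and lifts that; both are instances of the same transfer-map argument.
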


\begin{proof}
As above, we denote by $M_0$ and $M_0'$ the universal abelian covers of $M$ and $M'$, respectively. We already know that $c_0(L')\leq c_0(L)$.  Let us assume by contradiction that $c_0(L') < c_0(L)$. In particular, there exists a null-homologous periodic curve $\gamma:\R/p\Z\to M$ with action $\SSS_{c_0(L')}(\gamma)<0$. Let $m\in\N$ be the minimal integer such that the homotopy class $[\gamma]^m\in\pi_1(M)$ belongs to the subgroup $\pi_1(M')$. Let $d$ be the number of sheets of the finite cover $M'$. Notice that $m$ divides $d$, and we set $k:=d/m$. The $m$-th iterate of $\gamma$ lifts to periodic curves $\zeta_i:\R/mp\Z\to M'$, for $i=1,...,k$, with pairwise distinct image. Notice that 
\[\SSS_{c_0(L')}'(\zeta_i)=m\SSS_{c_0(L')}(\gamma),\]
where $\SSS_e'$ is the action functional associated to the Lagrangian $L'$. At chain level, the multicurve $\zzeta=(\zeta_1,...,\zeta_k)$ is nothing but the image of $\gamma$ under the transfer map associated to the finite cover $M'$, see \cite[page~321]{Hatcher:2002dt}. Since the transfer map is a chain homomorphism, the multicurve $\zzeta$ is a homological boundary in $M'$. However, 
\begin{align*}
\SSS_{c_0(L')}'(\zzeta)=km\SSS_{c_0(L')}(\gamma)<0,
\end{align*}
which contradicts Lemma~\ref{l:actioncrit}.
\end{proof}
We can now state and prove the main result about periodic orbits on subcritical energy levels on non-orientable surfaces.
\begin{thm}\label{t:non_orientable}
Let $M$ be a closed non-orientable surface, and $L:\Tan M\to\R$ a Tonelli Lagrangian such that $e_0(L)<c_0(L)$. For each energy value $e\in(e_0(L),c_0(L)]$, there exists a homological boundary $\ggamma$ in $M$ with the following properties:
\begin{itemize}
\item[(i)] each component $\gamma_i$ is a simple Tonelli waist of $L$ with energy $e$,
\item[(ii)] for each $i,j\in\{1,...,m\}$, either $\gamma_i$ and $\gamma_j$ coincide  or they have disjoint support,
\item[(iii)] $\ggamma$ has action $\SSS_e(\ggamma)<0$ if $e<c_0(L)$, and action $\SSS_e(\ggamma)=0$ if $e=c_0(L)$,
\item[(iv)] $\ggamma$ lifts to a minimal boundary with energy $e$ for $L'$, where $L'$ is the lift of $L$ to the tangent bundle of the orientation double cover $M'$ of $M$.
\end{itemize}
\end{thm}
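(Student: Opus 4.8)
The plan is to pass to the orientation double cover of $M$ and exploit the subcritical graph theorem. Let $p:M'\to M$ be the orientation double cover, an oriented closed connected surface (connected since $M$ is non-orientable), and let $\iota:M'\to M'$ be its deck transformation, a free orientation-reversing involution. Let $L':\Tan M'\to\R$ be the lift of $L$. Since $L'$ is pulled back from $M$ and $\iota$ covers $\mathrm{id}_M$, we have $L'\circ\diff\iota=L'$, so $\iota$ preserves the free-period action functional $\SSS_e$ on multicurves of $M'$. Moreover $e_0(L')=e_0(L)$, and since $M'\to M$ is a finite cover, Lemma~\ref{l:c0} gives $c_0(L')=c_0(L)$; in particular $e_0(L')<c_0(L')$. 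Theorem~\ref{t:below_Mane} applied to $(M',L')$ then provides, for each $e\in(e_0(L'),c_0(L')]$, a minimal boundary $\ggamma'$ for $L'$ with energy $e$ whose action is negative if $e<c_0$ and zero if $e=c_0$.

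The next step is to extract rigidity from the graph theorem. Writing $\partial\Sigma=\ggamma'$ for an open set $\Sigma\subset M'$, the parametrized multicurve $\iota(\ggamma')$ bounds $M'\setminus\overline{\iota\Sigma}$ and has action $\SSS_e(\iota\ggamma')=\SSS_e(\ggamma')=\inf_{\tb}\SSS_e$ (the orientation-reversal of $\iota$ only alters the choice of bounded region, not the action), so $\iota(\ggamma')$ is again a minimal boundary for $L'$ with energy $e$. Applying Theorem~\ref{t:graph} to the two minimal boundaries $\ggamma'$ and $\iota(\ggamma')$, any component of one and any component of the other either coincide (as parametrized curves up to a time shift) or have disjoint image. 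Since $e>e_0$, a Tonelli waist with energy $e$ has nowhere-vanishing velocity, so $\iota$ cannot send a component of $\ggamma'$ to its time-reversal; hence each component $c$ of $\ggamma'$ satisfies either $\iota(c)\cap c=\varnothing$, in which case $p(c)$ is a two-sided simple closed curve in $M$, or $\iota(c)=c$ as a set with $\iota|_c$ a half-period shift, in which case $c$ double-covers a one-sided simple closed curve $c/\iota$ in $M$. As supporting intuition (for $e<c_0$), note also that $\ggamma'$ and $\iota(\ggamma')$ cannot have pairwise disjoint components: otherwise $\ggamma'\sqcup\iota(\ggamma')$ would be an element of $\Mult_e$ of action $2\SSS_e(\ggamma')<\inf_{\Mult_e}\SSS_e$, which is absurd.

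The heart of the proof — and the step I expect to be the main obstacle — is to upgrade $\ggamma'$ to an \emph{$\iota$-invariant} minimal boundary $\ggamma''$ for $L'$ with energy $e$, meaning one whose underlying curve system is $\iota$-invariant and such that $\iota$ interchanges its two complementary regions. The mechanism is the cut-and-paste from the proof of Theorem~\ref{t:graph} applied to $\Sigma$ and $M'\setminus\overline{\iota\Sigma}$: it yields minimal boundaries $\ttheta=\partial(\Sigma\cap(M'\setminus\overline{\iota\Sigma}))$ and $\eeta=\partial(\Sigma\cup(M'\setminus\overline{\iota\Sigma}))$, and a computation of the $\iota$-images of the regions involved shows $\iota(\ttheta)=\eeta$. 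One then aims to show, by passing to a minimal boundary of least complexity (fewest components, say) and iterating this construction, that one may arrange $\ttheta=\eeta$; that equality says exactly that $\iota$ swaps the two sides of $\ggamma'':=\ttheta$. The delicate point is that the orientation-reversal of $\iota$ obstructs a naive symmetrization (the formal union $\ggamma'\sqcup\iota\ggamma'$ has action $2\inf_{\tb}\SSS_e$ and so is not a minimal boundary), so the argument must genuinely use the rigidity coming from Theorem~\ref{t:graph}.

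Granting such an $\iota$-invariant minimal boundary $\ggamma''$, the conclusion goes as follows. The involution $\iota$ permutes the components of $\ggamma''$; each orbit has size two, $\{c,\iota(c)\}$ with disjoint images, or size one, $\{c\}$ with $\iota(c)=c$ setwise. Let $\ggamma$ be the multicurve in $M$ obtained by taking, for each orbit, the corresponding simple closed curve — $p(c)$ for a size-two orbit, $c/\iota$ for a size-one orbit — with multiplicity two. Then: (i) every component of $\ggamma$ is a simple Tonelli waist of $L$ with energy $e$: simplicity is clear, and local minimality passes to the quotient because any $\W$-small competitor downstairs lifts — after squaring, in the one-sided case — to a $\W$-small competitor upstairs, so the component of $\ggamma''$ being a local minimizer of $\SSS_e$ for $L'$ forces its projection to be one for $L$. (ii) The components of $\ggamma$ are pairwise either coincident or with disjoint support, since the components of $\ggamma''$ are pairwise disjoint or equal. (iii) A bookkeeping of periods (for $p\circ c$ an embedded parametrized curve when the orbit has size two, and $p\circ c=(c/\iota)^2$ when it has size one) shows that each orbit contributes to $\SSS_e(\ggamma)$ exactly what it contributes to $\SSS_e(\ggamma'')$, so $\SSS_e(\ggamma)=\SSS_e(\ggamma'')=\inf_{\tb}\SSS_e$, whence $\SSS_e(\ggamma)<0$ if $e<c_0(L)$ and $\SSS_e(\ggamma)=0$ if $e=c_0(L)$; moreover $[\ggamma]=p_*[\ggamma'']=p_*(0)=0$ in $\Hom_1(M;\Z)$ (at chain level $p_*$ of a size-two orbit gives twice the projected curve, and of a size-one orbit gives the square of the projected curve, matching the multiplicity-two choice). (iv) Finally $\ggamma$ lifts back to $p^{-1}(\ggamma)=\ggamma''$, a minimal boundary for $L'$ with energy $e$.
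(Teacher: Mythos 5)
Your setup (pass to the orientation double cover, get a minimal boundary $\ggamma'$ for $L'$ via Theorem~\ref{t:below_Mane} and Lemma~\ref{l:c0}, observe that the deck transformation preserves $L'$ so that the image of $\ggamma'$ is again a minimal boundary, and invoke Theorem~\ref{t:graph}) is exactly the paper's starting point. But your proof then has a genuine gap at what you yourself flag as ``the heart of the proof'': the construction of an $\iota$-invariant minimal boundary $\ggamma''$ is only sketched (cut-and-paste of $\Sigma$ and $M'\setminus\overline{\iota\Sigma}$, pass to ``least complexity'', iterate, hope to force $\ttheta=\eeta$) and never carried out; as written there is no argument that the iteration terminates with $\ttheta=\eeta$, so the subsequent quotient construction rests on an unproved claim. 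Two smaller problems: your reason for excluding that $\iota$ sends a component to its time-reversal (nonvanishing velocity) does not work — the time-reversed curve also has nonvanishing velocity; what actually excludes it is the rephrased Theorem~\ref{t:graph} itself, which forces coincidences of two minimal boundaries to be time-preserving. And your final multicurve, built from the quotient curves $c/\iota$ with multiplicity two, does not literally satisfy (iv): a one-sided curve $c/\iota$ does not lift to a closed curve in $M'$, so componentwise lifting fails for the size-one orbits.

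The detour is in fact unnecessary, and this is where the paper's proof diverges from yours: it simply projects, setting $\ggamma:=\pi(\ggamma')$ componentwise. Then (iii) is immediate because each $\gamma_i=\pi\circ\gamma'_i$ has the same period and the same action for $L$ as $\gamma'_i$ has for $L'$, (iv) holds because each component lifts back to the corresponding $\gamma'_i$, and $[\ggamma]=\pi_*[\ggamma']=0$. For (i)--(ii) one notes that $\gamma_i(r)=\gamma_j(s)$ with $(i,r)\neq(j,s)$ happens exactly when $F(\gamma'_i(r))=\gamma'_j(s)$, where $F$ is the nontrivial deck transformation; since $F(\ggamma')$ is again a minimal boundary, Theorem~\ref{t:graph} upgrades any such coincidence to $F(\gamma'_i(r+t))=\gamma'_j(s+t)$ for all $t$, so distinct projected components either coincide (which statement (ii) explicitly allows) or are disjoint, and each projected component is a simple waist (respectively covers a one-sided simple waist when $F(\gamma'_i)$ coincides with $\gamma'_i$). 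In short: no $\iota$-symmetrization of the minimal boundary is needed, and attempting one is precisely where your argument stalls.
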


\begin{proof}
Let $\pi:M'\to M$ be the orientation double cover of the non-orientable closed surface $M$. We lift $L$ to a Tonelli Lagrangian $L':\Tan M'\to\R$, $L'(q,v)=L(\pi(q),\diff\pi(q)v)$, and we denote by $\SSS_e'$ the action functional associated to $L'$. Clearly $e_0(L)=e_0(L')$ and, by Lemma~\ref{l:c0}, $c_0(L)=c_0(L')$. Fix an energy value $e\in(e_0(L),c_0(L)]$. By Theorem~\ref{t:below_Mane}, there exists a minimal boundary $\ggamma'=(\gamma'_1,...,\gamma'_m)$ with energy $e$ for $L'$ whose action is either $\SSS'_e(\ggamma')<0$ if $e<c_0(L)$, or $\SSS'_e(\ggamma')=0$ if $e=c_0(L)$. The projection $\ggamma=(\gamma_1,...,\gamma_m):=\pi(\ggamma')$ is a homological boundary that satisfies points (iii-iv) of the statement, and whose components are Tonelli waists with energy $e$. Let $F:M'\to M'$ be the non-trivial deck transformation of the orientation double cover. Notice that $\gamma_i(r)=\gamma_j(s)$ for some $(i,r)\neq(j,s)$ if and only if $F(\gamma'_i(r))=\gamma'_j(s)$. Since the Lagrangian $L'$ satisfies $L'(F(q),\diff F(q)v)=L'(q,v)$ for all $(q,v)\in\Tan M'$, the multicurve $F(\ggamma')=(F(\gamma'_1),...,F(\gamma'_m))$ is a minimal boundary. Therefore, by Theorem~\ref{t:graph}, $F(\gamma'_i(r))=\gamma'_j(s)$ if and only if $F(\gamma'_i(r+t))=\gamma'_j(s+t)$ for all $t\in\R$. This completes the proof of point (i), showing that the components of $\ggamma$ are embedded, and of point~(ii).
\end{proof}


\section{Genericity of the condition $e_0(L)<\cu(L)$}
\label{s:suff_cond}
Let $M$ be a closed manifold, and $L:\Tan M\to\R$ a Tonelli Lagrangian with energy $E:\Tan M\to\R$. We denote by $\SSS_e:\W(\R/\Z,M)\times(0,\infty)\to\R\cup\{\infty\}$ the free-period action functional at energy $e$. It is well known that $e_0(L)\leq \cu(L)$. Indeed,  $\SSS_{e_0(L)}(\Gamma,p)=0$ if $\Gamma$ is a constant curve at some $q\in M$ with $e_0(L)=E(q,0)=-L(q,0)$. We set 
\begin{align*}
V(q) & :=-L(q,0),\\
\theta_q(v) & :=\partial_vL(q,0)v,\\
g_q(v,v) & :=\partial^2_{vv} L(q,0)[v,v].
\end{align*}
Notice that $g$ is a Riemannian metric on $M$, since $L$ is Tonelli. Moreover,
\begin{align*}
 e_0(L) = -\min L(\,\cdot\,,0)=\max V.
\end{align*}
We consider the function
\begin{equation*}
\lambda:V^{-1}(e_0(L))\to\R,\qquad \lambda(q)=2 |\hess V(q)|^{1/2}-|\diff\theta_q|,
\end{equation*}
where $|\cdot|$ is the norm induced by $g$, and $\hess V(q)$ is the  Hessian of $V$ at $q$. Notice that this Hessian is well defined, for $V^{-1}(e_0(L))\subset\crit(V)$.

\begin{prop}\label{p:general_suff_cond}
If the function $\lambda$ is somewhere negative, then $e_0(L)<\cu(L)$.
\end{prop}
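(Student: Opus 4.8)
The plan is to produce, for some energy $e>e_0(L)$, a contractible periodic curve in $M$ on which the free-period action functional $\SSS_e$ is negative. This suffices: since $\cu(L)$ is by definition the least energy at which the free-period action functional of the lifted Lagrangian on the universal cover $\widetilde M$ is non-negative, and since contractible periodic curves in $M$ are exactly the projections of periodic curves in $\widetilde M$ (with the same action, the lifted Lagrangian being the pull-back of $L$), the existence of such a curve forces $\cu(L)>e_0(L)$. In fact I will exhibit a contractible periodic curve $\gamma$ — not necessarily a Lagrangian orbit — with $\SSS_{e_0(L)}(\gamma)<0$; then, writing $\SSS_e(\Gamma,p)=\SSS_{e_0(L)}(\Gamma,p)+(e-e_0(L))p$, the same $\gamma$ has $\SSS_e(\gamma)<0$ for all $e$ slightly above $e_0(L)$, which is enough.

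Fix $q_0\in V^{-1}(e_0(L))$ with $\lambda(q_0)<0$, and set $a:=|\hess V(q_0)|$, $b:=|\diff\theta_{q_0}|$, so that $2\sqrt a<b$. Choose coordinates on a neighbourhood of $q_0$, centred at $q_0$, in which $g_{q_0}$ is the Euclidean inner product. Since $q_0$ is a maximum of $V$ we have $\diff V(q_0)=0$, and in these coordinates $V(x)=e_0(L)+\tfrac12\langle Ax,x\rangle+o(|x|^2)$ with $A=\hess V(q_0)$ negative semidefinite and $\|A\|=a$. Taylor-expanding $L$ in the fibre variable along the zero section gives $L(q,v)=-V(q)+\theta_q(v)+\tfrac12 g_q(v,v)+\rho(q,v)$ with $|\rho(q,v)|\le C|v|^3$ for $(q,v)$ in a compact neighbourhood of $(q_0,0)$. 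Pick a $g_{q_0}$-orthonormal pair $u,v$ spanning a $2$-plane $\Pi\subset\Tan_{q_0}M$ on which $\diff\theta_{q_0}$ realises its norm, with the ordering chosen so that $\diff\theta_{q_0}(u,v)=-b$ (on a surface, $\Pi=\Tan_{q_0}M$ and this is automatic). For a parameter $\kappa>0$, to be fixed later, and $\epsilon>0$ small, let $\gamma_\epsilon$ be the round circle of $g_{q_0}$-radius $\epsilon$ in $\Pi$, parametrised with constant Euclidean speed $\kappa\epsilon$: it is a smooth contractible periodic curve, of period $p=2\pi/\kappa$, bounding the flat disc $D_\epsilon\subset\Pi$.

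Now I estimate the four pieces of
\[
\SSS_{e_0(L)}(\gamma_\epsilon)=\int_0^p\!\big(e_0(L)-V(\gamma_\epsilon)\big)\,\diff t+\int_{D_\epsilon}\!\diff\theta+\int_0^p\!\tfrac12 g_{\gamma_\epsilon}(\dot\gamma_\epsilon,\dot\gamma_\epsilon)\,\diff t+\int_0^p\!\rho(\gamma_\epsilon,\dot\gamma_\epsilon)\,\diff t ,
\]
where the $\theta$-term has been turned into a flux by Stokes' theorem. Using $A\le0$ and averaging $\langle A\cdot,\cdot\rangle$ over the circle, the first term equals $-\tfrac{\pi\epsilon^2}{2\kappa}\operatorname{tr}(A|_\Pi)+o(\epsilon^2)\le \tfrac{\pi a}{\kappa}\epsilon^2+o(\epsilon^2)$, since $\operatorname{tr}(A|_\Pi)\ge-2a$. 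The flux is $\diff\theta_{q_0}(u,v)\,\pi\epsilon^2+O(\epsilon^3)=-\pi b\,\epsilon^2+O(\epsilon^3)$. The kinetic term is $\pi\kappa\epsilon^2+o(\epsilon^2)$, and the remainder is bounded by $p\cdot C(\kappa\epsilon)^3=O(\kappa^2\epsilon^3)=o(\epsilon^2)$. Summing,
\[
\SSS_{e_0(L)}(\gamma_\epsilon)\le \pi\Big(\kappa+\tfrac a\kappa-b\Big)\epsilon^2+o(\epsilon^2).
\]
Since $\inf_{\kappa>0}\big(\kappa+\tfrac a\kappa\big)=2\sqrt a=b+\lambda(q_0)<b$, we may fix $\kappa>0$ with $\kappa+\tfrac a\kappa<b$; then $\SSS_{e_0(L)}(\gamma_\epsilon)<0$ for all sufficiently small $\epsilon$, and the proposition follows from the reduction in the first paragraph.

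The main technical point is keeping all error terms genuinely of order $o(\epsilon^2)$. This works because, once $\kappa$ is fixed, the period $p=2\pi/\kappa$ stays bounded as $\epsilon\to0$, so the cubic-in-$v$ remainder of $L$ (contributing $p\cdot O((\kappa\epsilon)^3)$), the super-quadratic part of $V$ (contributing $p\cdot o(\epsilon^2)$), the non-flatness of $g$ away from $q_0$ (a multiplicative $1+O(\epsilon)$ on the kinetic term), and the non-constancy of $\diff\theta$ on $D_\epsilon$ (an $O(\epsilon)\cdot\operatorname{Area}(D_\epsilon)$ error) are all subordinate to the $\Theta(\epsilon^2)$ principal part; the degenerate case $a=0$ is covered uniformly because the relevant infimum over $\kappa$ is still $2\sqrt a=0<b$. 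A secondary point, relevant only when $\dim M>2$, is the choice of the $2$-plane $\Pi$: one should take $\Pi$ so that $\diff\theta_{q_0}|_\Pi$ realises $|\diff\theta_{q_0}|$ in the sense matching the normalisation of $|\cdot|$ entering the definition of $\lambda$ (on a surface, $\diff\theta_q$ is a top-degree form and this subtlety disappears).
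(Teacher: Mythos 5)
Your proposal is correct and follows essentially the same route as the paper: a small loop around the maximum of $V$ where $\lambda<0$, traversed at speed proportional to its radius and oriented so that the flux of $\diff\theta$ is negative, yields $\SSS_e<0$ at an energy just above $e_0(L)$, and contractibility of the loop transfers this to the universal cover, giving $e_0(L)<\cu(L)$. The only (harmless) cosmetic differences are that you use a coordinate circle with a fixed speed parameter $\kappa$ and bump the energy afterwards, whereas the paper uses a geodesic circle with speed $r\sqrt{a+b}$ and the $r$-dependent energy $e_r=e_0(L)+\tfrac{a}{2}r^2$.
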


\begin{proof}
Assume that there exists a point $q\in V^{-1}(e_0(L))$ such that $\lambda(q)<0$, and fix two normal tangent vectors $u,v\in\Tan_qM$ such that $|u|=|v|=1$ and
\begin{equation}\label{e:max}
\diff\theta_{q}(u,v)=|\diff\theta_{q}|\neq0.
\end{equation}
Let $D\subset M$ be an embedded open 2-disk containing the point $q$ and such that $T_qD=\mathrm{span}\{u,v\}$. We orient $D$ so that $v,u$ is an oriented basis of $T_qD$, and we denote by $\mu$ the Riemannian volume form on $D$ induced by $g$ for this orientation. Notice that $\diff\theta|_D=f\mu$ for some smooth function $f:D\to\R$ such that $f(q)< 0$. We set $b:=|\hess V(q)|$. Since $\lambda(q)<0$, we can also fix $a>0$ small enough so that 
\begin{equation}\label{e:neg}
2\sqrt{a+b}+f(q)<0.
\end{equation}
For each $r>0$ smaller than the injectivity radius of $g|_D$ at $q$, we set 
\[e_r:=e_0(L)+\tfrac a2r^2,\] and we denote by $\gamma_r$ the boundary of the Riemannian ball $B_r\subset D$ of $g|_D$ centered at $q$ of radius $r$. We parametrize $\gamma_r$ counterclockwise with constant speed $|\dot\gamma_r(t)|\equiv s_r$ and period $\tau_r $ given by
\[
\tau_r:=\frac{\ell_r}{r\sqrt{a+b}},\qquad s_r:=r\sqrt{a+b},
\]
where $\ell_r$ is the length of $\gamma_r$. Since $\ell_r=2\pi r+o(r)$, we have
\[
\tau_r=\frac{2\pi}{\sqrt{a+b}}+o(1).
\]
Since $e_0(L)=V(q)$, we can estimate
\[
L(x,u) \leq -e_0(L) +\theta_x(u)+ \tfrac{1}{2}|u|^2+\tfrac{1}{2}b\,\mathrm{dist}(q,x)^2 + o( \mathrm{dist}(q,x)^2) + o(|u|^2 ),
\]
where we denoted by ``$\mathrm{dist}$'' the Riemannian distance in $(D,g)$. Therefore
\begin{align*}
L(\gamma_r,\dot\gamma_r)+e_r&=\frac{1}{2}s_r^2+\theta_{\gamma_r}(\dot\gamma_r)+e_r-e_0+\frac{b}{2}r^2+o(r^2)\\
&=\frac{a+b}{2}r^2+\theta_{\gamma_r}(\dot\gamma_r)+\frac{a+b}{2}r^2+o(r^2)\\
&=(a+b)r^2+\theta_{\gamma_r}(\dot\gamma_r)+o(r^2).
\end{align*}
Moreover
\[
\int_0^{\tau_r}\theta_{\gamma_r(t)}(\dot\gamma_r(t))\diff t=\int_{B_r}\diff \theta=\int_{B_r}f\mu=f(q)\pi r^2+o(r^2).
\]
Putting together the last two equations, we obtain the estimate
\begin{align*}
\mathcal S_{e_r}(\gamma_r)&=\tau_r(a+b)r^2+o(r^2)+f(q)\pi r^2+o(r^2)\\
&=\big(2\sqrt{a+b}+f(q)\big)\pi r^2+o(r^2).
\end{align*}
This, together with \eqref{e:neg}, shows that $\mathcal S_{e_r}(\gamma_r)$ is negative for $r>0$ small enough and hence $e_0(L)<e_r<\cu(L)$.
\end{proof}

\newcommand{\TT}{\mathcal{T}}

We denote by $\TT$ the set of Tonelli Lagrangians $L:TM\rightarrow \R$, and by $\TT'\subset\TT$ the subset of those Tonelli Lagrangians $L$ such that $e_0(L)<\cu(L)$.

\begin{prop}\label{p:T'}
The subset $\TT'$ is $C^0$-open and $C^1$-dense in $\mathcal T$.
\end{prop}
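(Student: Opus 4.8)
The plan is to prove the two assertions separately: $C^0$-openness follows softly from the variational characterization of the Ma\~n\'e critical value, whereas $C^1$-density is reduced to Proposition~\ref{p:general_suff_cond} by an explicit three-step perturbation.

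For openness, I would use that $\cu(L)$ is the least $k\in\R$ for which the lifted free-period action functional $\SSS_k$ is non-negative on closed curves of the universal cover of $M$ --- equivalently, on contractible periodic curves in $M$ --- and that $e_0(L)=\max_{q\in M}(-L(q,0))$ depends only on $L$ along the zero section. Given $L\in\TT'$, fix $e\in(e_0(L),\cu(L))$; since $e<\cu(L)$ there is a contractible periodic curve $\gamma\colon\R/T\Z\to M$ with $\SSS_e(\gamma)<0$. If $L'\in\TT$ is $C^0$-close to $L$ on the zero section and on the compact image of $(\gamma,\dot\gamma)$ in $\Tan M$, then $|\SSS_e(\gamma;L')-\SSS_e(\gamma;L)|\leq T\,\|L-L'\|_{C^0}$ and $e_0(L')\leq e_0(L)+\|L-L'\|_{C^0}$, so for a sufficiently small neighbourhood of $L$ one has both $\SSS_e(\gamma;L')<0$ and $e_0(L')<e$. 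The first inequality forces $\cu(L')>e$: were $\cu(L')\leq e$, monotonicity in the energy would give $\SSS_e(\gamma;L')\geq\SSS_{\cu(L')}(\gamma;L')\geq0$, a contradiction. Hence $e_0(L')<e<\cu(L')$, i.e.\ $L'\in\TT'$.

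For density, given an arbitrary $L\in\TT$, by Proposition~\ref{p:general_suff_cond} it is enough to produce $L''\in\TT$, $C^1$-close to $L$, admitting a global maximum point $q_1$ of $V''=-L''(\cdot,0)$ with $2|\hess V''(q_1)|^{1/2}<|\diff\theta''_{q_1}|$, where $\theta''_q=\partial_vL''(q,0)$. I would build $L''$ by composing three perturbations. First, a $C^\infty$-small perturbation by a function of $q$ alone, arranging that $V=-L(\cdot,0)$ has a unique nondegenerate global maximum at some $q_1$; this changes neither $\theta=\partial_vL(\cdot,0)$ nor $g=\partial^2_{vv}L(\cdot,0)$. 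Second, a $C^1$-small perturbation by a term $\chi(|v|)\,\alpha_q(v)$, with $\alpha$ a $1$-form supported near $q_1$ and $\chi$ a cutoff equal to $1$ near the zero section and vanishing outside a compact set: the only potentially large derivative of this term, the $q$-derivative, is multiplied by the bounded quantity $\chi(|v|)|v|$, so its $C^1$-norm is $O(\|\alpha\|_{C^1})$ and $L''$ stays Tonelli; the term leaves $V$ and $g$ unchanged along the zero section and replaces $\theta$ by $\theta+\alpha$, and choosing $\alpha=\delta\,x_1\beta(|x|)\,\diff x_2$ in normal coordinates centred at $q_1$ (with $\beta(0)=1$ a cutoff and $\delta>0$ small, such that $\diff\theta_{q_1}+\delta\,\diff x_1\wedge\diff x_2\neq0$) makes $|\diff\theta_{q_1}|$, after the perturbation, equal to some $\eta>0$. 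Third, a $C^1$-small perturbation by a function $\psi$ of $q$ alone, supported in a small ball $B_\rho(q_1)$, arranged so that $V''=V(q_1)-\tfrac12\varepsilon\,A[x,x]$ on $B_{\rho/2}(q_1)$, where $A:=-\hess V(q_1)>0$ and $\varepsilon>0$ is fixed with $2\sqrt{\varepsilon}\,|A|^{1/2}<\eta$, while $V''=V$ outside $B_\rho(q_1)$ and $V''$ interpolates monotonically in the collar; then $\psi$ is $O(\rho^2)$ in $C^0$ and $O(\rho)$ in $C^1$, so this perturbation is $C^1$-small for $\rho$ small (the choice of $\varepsilon$ being independent of $\rho$), it keeps $q_1$ as the unique global maximum of $V''$ (since $V<V(q_1)$ off $q_1$ after the first step), and it changes neither $\theta+\alpha$ nor $g$. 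For the resulting $L''$ one then has $(V'')^{-1}(e_0(L''))=\{q_1\}$ and $\lambda''(q_1)=2\sqrt{\varepsilon}\,|A|^{1/2}-\eta<0$, so Proposition~\ref{p:general_suff_cond} gives $e_0(L'')<\cu(L'')$; since $L''$ is $C^1$-close to $L$, this proves $C^1$-density.

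I expect the main difficulty to lie in the third step: one must flatten the maximum of the potential, i.e.\ make $\hess V''(q_1)$ as small as wished, by a $C^1$-small perturbation while retaining $q_1$ as a genuine global maximum. The device is to support the perturbation in a shrinking ball --- so that its $C^1$-size is governed by the radius $\rho$ rather than by the flatness parameter $\varepsilon$, allowing $\varepsilon$ and then $\rho$ to be chosen in turn --- and to first normalize (in the first step) to a unique nondegenerate maximum, which makes the monotone interpolation in the collar keep $V''$ strictly below $V(q_1)$ there, while $V''=V<V(q_1)$ outside the ball. The remaining points (that the cutoff magnetic term keeps $L''$ Tonelli and $C^1$-small, and that $\cu$ varies lower-semicontinuously in $C^0$) are routine.
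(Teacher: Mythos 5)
Your proof is correct, and while the openness half is essentially the paper's argument, the density half uses a genuinely different device at the key point. For openness, both you and the paper perturb nothing but observe that a fixed negative-action test curve and the value $e_0$ behave continuously under $C^0$-perturbations of $L$ along a compact set; your version has the small merit of making explicit that the test curve must be contractible, so that $\SSS_e(\gamma;L')<0$ really forces $e<\cu(L')$ (the paper leaves this implicit). For density, both proofs reduce to Proposition~\ref{p:general_suff_cond} by perturbing $L$ near a global maximum $q$ of $V=-L(\cdot,0)$, using a $q$-only term to handle the potential and a fiber-cutoff $1$-form term $\chi(|v|)\,\alpha_q(v)$ to tilt $\theta$; the difference is how the Hessian of $V$ at the maximum is treated. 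The paper flattens $V$ completely: it replaces $L$ by $L-(e_0(L)-V)\chi_\epsilon$ on a ball $B_{r(\epsilon)}(q)$ on which $|\diff V|\leq\epsilon$, which is $C^1$-small of order $\epsilon$ because $e_0(L)-V=O(r\epsilon)$ there while $|\diff\chi_\epsilon|=O(r^{-1})$; afterwards $V'$ is constant near $q$, so $\hess V'(q)=0$ and \emph{any} nonzero $\diff\theta''_q$ gives $\lambda''(q)<0$, with no quantitative comparison and no Morse-type preliminary step. You instead first genericize to a unique nondegenerate maximum and then only shrink the Hessian to $-\varepsilon A$ via a quadratic model supported in a ball of radius $\rho$, exploiting that the correction is $O(\rho^2)$ in $C^0$ and $O(\rho)$ in $C^1$ so that $\varepsilon$ and $\rho$ can be chosen in turn; this forces the quantitative inequality $2\sqrt{\varepsilon}\,|A|^{1/2}<|\diff\theta''_{q_1}|$ and the careful ordering of the choices of $\alpha$, $\varepsilon$, $\rho$, but your supporting estimates (the collar interpolation staying strictly below $V(q_1)$, the $C^1$-size of the cutoff $1$-form term being of order $\|\alpha\|_{C^1}$, the Tonelli property being preserved) are all sound, so the argument goes through. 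In short: the paper's full flattening is slicker because it eliminates the Hessian term altogether, while your partial flattening keeps the maximum nondegenerate at the cost of one extra genericity step and a quantitative comparison — both are valid routes to the same conclusion.
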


\begin{proof}
Consider a Tonelli Lagrangian $L\in\TT'$, and fix an energy $e\in(e_0(L),\cu(L))$ and a curve $\gamma=(\Gamma,p)\in\W(\R/\Z,M)\times(0,\infty)$ such that $\SSS_e(\gamma)<0$. Let $L'\in\TT$ be an arbitrary Tonelli Lagrangian such that $|L-L'|\leq\delta$ on the support of $(\gamma,\dot{\gamma})$ and on the zero section, where
\begin{align*}
\delta = \min \Big\{ \tfrac1p \SSS_e(\gamma) , e-e_0(L)\Big\}
\end{align*}
If $\SSS_e'$ denotes the free-period action functional associated to $L'$, we have 
\[\SSS_e'(\gamma)\leq\SSS_e(\gamma)+p\delta<0,\]
and therefore $e<\cu(L')$. Moreover,
\begin{align*}
e_0(L')=-\min L'(\,\cdot\,,0)\leq -\min L(\,\cdot\,,0) + \delta = e_0(L)+\delta \leq e < \cu(L').
\end{align*}
This proves that $\TT'$ is $C^0$-open in $\mathcal T$.

Now, let $L\in\TT$ be an arbitrary Tonelli Lagrangian and fix an arbitrary $\delta>0$. Let us adopt the notation of Proposition~\ref{p:general_suff_cond}, and consider $V$, $\theta$, $g$, and $\lambda$ associated to $L$. We fix a global maximum $q$ of $V$, so that $V(q)=e_0(L)$ and prove that there exists $L''\in\TT'$ with $|L-L''|_{C^1}<\delta$ in two steps.

First, we claim that there exists $L'\in \TT$ such that $|L-L''|_{C^1}<\delta/2$ and with the property that $V'(q)=e_0(L')$ and $\diff^2 V'(q)=0$. To this purpose, for every arbitrary $\epsilon>0$, we consider a small open ball $B_{r(\epsilon)}(q)$ of radius $r(\epsilon)$ such that $|\diff V|\leq \epsilon$ on $B_{r(\epsilon)}(q)$. There exist a compactly supported function $\chi_\epsilon:B_{r(\epsilon)}(q)\to [0,1]$ which is equal to $1$ on $B_{r(\epsilon)/2}(q)$ and a constant $C$ depending only on a choice of a metric on $M$ such that
\[
|e_0(L)-V|\leq C r\epsilon,\qquad |\diff\chi_\epsilon|\leq C r^{-1},\qquad \text{on } B_{r(\epsilon)}(q).
\]
We define $L'_\epsilon:\Tan M\to \R$ as $L'_\epsilon=L-(e_0(L)-V)\chi_\epsilon$. There holds
\begin{align*}
|L'_\epsilon-L|+|\diff(L'_\epsilon-L)|&=|(e_0(L)-V)\chi_\epsilon|+|\chi_\epsilon \diff V|+|(e_0(L)-V)\diff\chi_\epsilon|\\
&\leq Cr\epsilon+\epsilon+C r\epsilon\cdot Cr^{-1},
\end{align*}
from which we see that $L'_\epsilon\in\TT$ and $|L'_\epsilon-L|_{C^1}<\delta/2$ provided $\epsilon$ is small enough. Moreover, $V'_\epsilon=V+(e_0(L)-V)\chi_\epsilon\leq e_0(L)$ and
\[
V'_\epsilon(q')=V(q')+(e_0(L)-V(q')\chi_\epsilon(q')=e_0(L),\qquad \forall\,q'\in B_{r(\epsilon)/2}(q).
\]
Therefore, we conclude that $e_0(L'_\epsilon)=V'(q)=e_0(L)$ and that $\diff^2 V'(q)=0$, so that we can take $L'=L'_\epsilon$ for $\epsilon$ small.

The second step consists in finding $L''\in \TT'$ with $|L''-L'|_{C^1}\leq \delta/2$. We look for $L''$ of the form
\[
L''_\epsilon(x,v)=L'(x,v)+\epsilon \chi\big(|v|^2\big)\nu_x(v)
\]
for $\epsilon>0$ small. Here, $\chi:[0,\infty)\to[0,1]$ is a smooth bump function supported in $[0,1]$ equal to $1$ in a neighbourhood of $1$ while $\nu$ is some $1$-form on $M$. Clearly, $L''_\epsilon$ converges to $L'$ as $\epsilon$ tends to $0$ and coincides with $L'+\epsilon\nu$ close to the zero section. Therefore, $V''_\epsilon=V'$ and $\theta''_\epsilon=\theta'+\epsilon\nu$. By Proposition~\ref{p:general_suff_cond} it is enough to choose $\nu$ in such a way that the number
\[
\lambda''_\epsilon(q)=2|\diff^2 V''_\epsilon(q)|^{1/2}-|\diff(\theta''_\epsilon)_q|=-|\diff\theta_q+\epsilon\diff\nu_q|
\]
is negative for some small $\epsilon$. This can clearly be achieved and finishes the proof that $\TT'$ is $C^1$ dense in~$\TT$.
\end{proof}


\section{Applications to Finsler geodesic flows of Randers type on $S^2$}
\label{s:Finsler}

Let $L:\Tan S^2\to\R$ be a Tonelli Lagrangian, with associated energy function $E:\Tan S^2\to\R$ and free-period action functionals $\SSS_e$. It is well known that for every $e>c(L)$, the Euler-Lagrange flow of $L$ on the energy hypersurface $E^{-1}(e)$ is orbitally equivalent to the geodesic flow of a Finsler metric on the unit tangent bundle of $S^2$, see \cite[Cor.~2]{Contreras:1998lr}. If the Lagrangian is magnetic, this equivalence is particularly explicit, as we now recall following \cite{Paternain:1999lk}. 

Let $g$ be a Riemannian metric and $\sigma$ an exact 2-form on $S^2$. For every primitive $\theta$ of $L$, we define the magnetic Tonelli Lagrangian
\begin{align*}
L(q,v)=\tfrac12g_q(v,v)+\theta_q(v).
\end{align*}
The Euler-Lagrange flow $\phi_L^t$ and the free-period action functional $\SSS_e$ are independent of the choice $\theta$. In particular, the same is true for the Ma\~n\'e critical value $c(L)$ and for the energy value $\cw(L)$ provided by Theorem~\ref{t:just_above}. The energy function associated to $L$ is given by $E(q,v)=\tfrac12|v|^2$, and thus $e_0(L)=0$. The Tonelli Hamiltonian $H:\Tan^*S^2\to\R$ dual to $L$ is given by
\begin{align*}
 H(q,p) = \tfrac12 |p-\theta_q|^2.
\end{align*}
We denote by $|\cdot|$ the norm of tangent and cotangent vectors associated to the Riemannian metric $g$ on $S^2$, and by $\|\cdot\|_\infty$  the corresponding $L^\infty$-norm of 1-forms, i.e.
\[\|\theta\|_{\infty}:=\max_{q\in S^2} |\theta_q|.\]
By \cite[Theorem~A]{Contreras:1998lr}, we have
\begin{align*}
c(L)= \inf_{\diff \theta=\sigma} \tfrac12 \|\theta'\|_{\infty}^2, 
\end{align*}
where the infimum is taken over all primitives $\theta'$ of $\sigma$, and Proposition~\ref{p:general_suff_cond} implies that $c(L)>e_0(L)$ provided $\sigma$ is not identically zero. We fix $r>\|\theta\|_{\infty}$, and consider the Finsler metric of Randers type on $S^2$
\begin{align*}
F(q,v) = |v| + r^{-1}\theta_q(v).
\end{align*} 
The diffeomorphism
\begin{align*}
\psi: E^{-1}(r^2/2) \to F^{-1}(1),\qquad
\psi(q,v)=(q,F(q,v)^{-1}v)
\end{align*}
realizes an orbit equivalence between the Euler-Lagrange flow of $L$ and the geodesic flow of $F$ on the respective energy hypersurfaces (see \cite[Lemma~2.1]{Paternain:1999lk}). 

We define
\begin{equation}
\label{e:rtheta}
\begin{split}
r_0(g,\sigma) & := \sqrt{ 2\,c(L) }
= \inf_{\diff\theta'=\sigma} \|\theta'\|_{\infty},\\
r_{\mathrm{w}}(g,\sigma) & := \sqrt{ 2\,\cw(L) },
\end{split}
\end{equation}
where the infimum is taken over all primitives $\theta'$ of $\sigma$. Notice that
\begin{align*}
 r_0(g,\sigma) < r_{\mathrm{w}}(g,\sigma),
\end{align*}
according to Theorem~\ref{t:just_above}.
A periodic curve $\Gamma:\R/\Z\to S^2$ is called a \textbf{waist} of the Finsler metric $F$ when it is a local minimizer of the length function 
\begin{align*}
\mathcal{L}: C^\infty(\R/\Z,S^2)\to\R,
\qquad
\mathcal{L}(\Gamma)=\int_{0}^{1} F(\Gamma(t),\dot\Gamma(t))\,\diff t.
\end{align*}
The reparametrization of a waist with constant $F$-speed is a closed geodesic for the Finsler metric $F$. Notice that, if $\gamma=(\Gamma,p)$ is a smooth periodic curve with constant energy $E(\gamma(t),\dot\gamma(t))\equiv \tfrac12 r^2$, then $\SSS_{r^2/2}(\gamma)=\mathcal L(\Gamma)$. This, together with the fact that $\mathcal{L}(\Gamma)$ is independent of the parametrization of $\Gamma$, implies that $\Gamma$ is a waist of $F$ whenever $\gamma=(\Gamma,p)$ is a local minimizer of the free-period action functional $\SSS_{r^2/2}$. Therefore, by  applying Theorems~\ref{t:just_above} and~\ref{t:multiplicity_just_above} to magnetic Lagrangians on $S^2$, we obtain a class of Finsler metrics of Randers type on $S^2$ possessing infinitely many closed geodesics. 

\begin{thm}\label{t:Randers}
Let $g$ be a Riemannian metric and $\sigma$ an exact 2-form on $S^2$ that is not identically zero. Let $\theta$ be any primitive of $\sigma$ such that $\|\theta\|_{\infty}<r_{\mathrm{w}}(g,\sigma)$, and $r$ any positive real number such that  $\|\theta\|_\infty < r < r_{\mathrm{w}}(g,\sigma)$. Then, the Finsler metric \[F(q,v)=g_q(v,v)^{1/2}+r^{-1}\theta_q(v)\] has a simple waist and infinitely many closed geodesics.
\hfill\qed
\end{thm}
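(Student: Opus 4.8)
The plan is to obtain Theorem~\ref{t:Randers} as a direct corollary of Theorems~\ref{t:just_above} and~\ref{t:multiplicity_just_above} applied to the magnetic Tonelli Lagrangian $L(q,v)=\tfrac12 g_q(v,v)+\theta_q(v)$ on $\Tan S^2$, transported to the Finsler side by the orbit equivalence $\psi\colon E^{-1}(r^2/2)\to F^{-1}(1)$, $\psi(q,v)=(q,F(q,v)^{-1}v)$, recalled above. First I would verify the standing hypothesis of those theorems: here $E(q,v)=\tfrac12|v|^2$, so $e_0(L)=0$; since $V\equiv 0$ we have $\hess V\equiv 0$ and $\lambda(q)=-|\sigma_q|$, which is negative wherever $\sigma\neq 0$, so Proposition~\ref{p:general_suff_cond} gives $e_0(L)<\cu(L)$; and because $S^2$ is simply connected its universal (abelian) cover is $S^2$ itself, whence $\cu(L)=c_0(L)=c(L)$ and thus $e_0(L)<c_0(L)=c(L)$, as required by Theorems~\ref{t:just_above} and~\ref{t:multiplicity_just_above}.

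Next I would locate the relevant energy level. Theorem~\ref{t:just_above} furnishes $\cw(L)>c_0(L)=c(L)$, so by the definitions~\eqref{e:rtheta} we have $r_0(g,\sigma)<r_{\mathrm w}(g,\sigma)$. The one elementary observation needed is that $\theta$ is itself a primitive of $\sigma$, so $c(L)=\inf_{\diff\theta'=\sigma}\tfrac12\|\theta'\|_\infty^2\le\tfrac12\|\theta\|_\infty^2$, i.e.\ $r_0(g,\sigma)\le\|\theta\|_\infty$; combining this with the hypothesis $\|\theta\|_\infty<r<r_{\mathrm w}(g,\sigma)$ shows that the energy value $e:=\tfrac12 r^2$ satisfies $c(L)<e<\cw(L)$, equivalently $e\in(c_0(L),\cw(L))$. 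The strict inequality $r>\|\theta\|_\infty$ also guarantees that $F(q,v)=|v|+r^{-1}\theta_q(v)$ is positive on nonzero vectors, hence a genuine Finsler metric, so that $\psi$ is well defined and is an orbit equivalence between the Euler--Lagrange flow of $L$ on $E^{-1}(e)$ and the geodesic flow of $F$ on $F^{-1}(1)$.

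Finally I would conclude. Applying Theorem~\ref{t:just_above} at the energy $e\in(c_0(L),\cw(L))$ produces a (simple) Tonelli waist $\gamma=(\Gamma,p)$ of $L$ with constant energy $E(\gamma,\dot\gamma)\equiv e=\tfrac12 r^2$; as recalled in this section, $\SSS_e(\gamma)=\mathcal{L}(\Gamma)$ and $\mathcal{L}$ is invariant under reparametrization, so the local minimality of $\gamma$ for $\SSS_e$ forces $\Gamma$ to be a waist of $F$, and it is simple because $\gamma$ has no self-intersections; this gives the simple waist. Applying Theorem~\ref{t:multiplicity_just_above} at the same energy $e\in(c(L),\cw(L))$ yields infinitely many pairwise distinct non-iterated periodic orbits of the Euler--Lagrange flow of $L$ on $E^{-1}(e)$; since $\psi$ preserves the base point, these orbits project to infinitely many pairwise distinct closed curves on $S^2$, each a closed geodesic of $F$. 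The only step deserving an explicit word rather than a bare citation is this last transfer --- namely, that the orbit equivalence $\psi$ sends distinct (resp.\ non-iterated) Lagrangian periodic orbits to distinct (resp.\ non-iterated) closed $F$-geodesics, and that a local minimizer of $\SSS_{r^2/2}$ projects to a waist of $F$ --- but both are immediate from the description of $\psi$ and from the fact that $\SSS_{r^2/2}$ restricted to $E^{-1}(r^2/2)$ agrees with the $F$-length. I do not expect any genuine obstacle: the entire substance lies in Theorems~\ref{t:just_above} and~\ref{t:multiplicity_just_above}.
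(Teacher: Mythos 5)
Your proposal is correct and follows essentially the same route as the paper: the theorem is obtained exactly as in the discussion preceding it in Section~\ref{s:Finsler}, by applying Theorems~\ref{t:just_above} and~\ref{t:multiplicity_just_above} to the magnetic Lagrangian at energy $e=\tfrac12 r^2\in(c(L),\cw(L))$ (using Proposition~\ref{p:general_suff_cond} for $e_0(L)<c(L)$ and $c(L)\le\tfrac12\|\theta\|_\infty^2<e$), and transferring to the Finsler side via the identity $\SSS_{r^2/2}(\gamma)=\mathcal{L}(\Gamma)$ for constant-energy curves and the orbit equivalence $\psi$. Your extra remarks on why the transfer preserves distinctness and local minimality are exactly the (implicit) content of the paper's argument.
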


In view of Theorem~\ref{t:Randers}, it is useful to have a criterion that guarantees whether a given primitive $\theta$ of $\sigma$ satisfies $\|\theta\|_{\infty}<r_{\mathrm{w}}(g,\sigma)$. One such criterion is provided by the following lemma, which applies in particular when $g$ and $\theta$ are rotationally symmetric.

\begin{lem}\label{l:crit}
Let $M$ be a closed oriented surface, $g$ a Riemannian metric on $M$, and $\theta$ a non-closed 1-form on $M$. Consider
the vector field $Z$ on $M$ defined by $g(Z,\,\cdot\,)=-\theta$, and the set
\[N:=\big\{q\in M\ \big|\ |\theta_q|=\|\theta\|_{\infty}\big\}.\] 
If $N$ contains a topological boundary $\ggamma$ whose components are periodic orbits of the flow of $Z$, then 
$\|\theta\|_{\infty}=r_0(g,\diff\theta)<r_{\mathrm{w}}(g,\diff\theta)$.
\end{lem}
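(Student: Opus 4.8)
The plan is to prove that the hypothesis forces $c_0(L)=c(L)=\tfrac12\|\theta\|_\infty^2$; the strict inequality $r_0<r_{\mathrm w}$ will then follow at once from Theorem~\ref{t:just_above}. Throughout, $L:\Tan M\to\R$ denotes the magnetic Tonelli Lagrangian $L(q,v)=\tfrac12 g_q(v,v)+\theta_q(v)$, so that $\sigma:=\diff\theta$, $E(q,v)=\tfrac12|v|^2$, $e_0(L)=0$, and $r_0(g,\sigma)=\sqrt{2c(L)}$, $r_{\mathrm w}(g,\sigma)=\sqrt{2\cw(L)}$ as in~\eqref{e:rtheta}. Write $a:=\|\theta\|_\infty$ and $e:=\tfrac12a^2$; since $\theta$ is non-closed we have $\sigma\neq0$, hence $a>0$ and $e>0$. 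The easy half is the bound $c_0(L)\le c(L)\le e$: the left inequality is the general ordering of the critical values, while the right one is obtained, exactly as in the Example of Section~\ref{s:Mane}, by plugging $u\equiv0$ into the min-max formula $c(L)=\inf_{u\in C^\infty(M)}\max_{q\in M}\tfrac12|\diff u(q)-\theta_q|^2$ of \cite[Theorem~A]{Contreras:1998lr}.

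For the reverse inequality, I would extract from the hypothesis a homological boundary of zero $\SSS_e$-action. Reparametrize each component $\gamma_i:\R/p_i\Z\to M$ of the given topological boundary $\ggamma=(\gamma_1,\dots,\gamma_m)\subset N$ as a periodic orbit of the flow of $Z$, so that $\dot\gamma_i=Z\circ\gamma_i$ with $p_i>0$; these orbits are non-constant, because $|Z_q|=|\theta_q|=a>0$ for every $q\in N$, and reparametrizing does not affect the fact that $\ggamma$ is a topological, hence homological, boundary. Along $\gamma_i$ we have $|\dot\gamma_i|^2=|Z(\gamma_i)|^2=|\theta_{\gamma_i}|^2=a^2$, and, since $g(Z,\cdot)=-\theta$, also $\theta_{\gamma_i}(\dot\gamma_i)=\theta_{\gamma_i}(Z(\gamma_i))=-g_{\gamma_i}(Z(\gamma_i),Z(\gamma_i))=-a^2$; hence $L(\gamma_i(t),\dot\gamma_i(t))+e=\tfrac12a^2-a^2+\tfrac12a^2=0$, so that $\SSS_e(\gamma_i)=0$ for each $i$ and $\SSS_e(\ggamma)=\sum_i\SSS_e(\gamma_i)=0$. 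Applying Lemma~\ref{l:actioncrit} to the homological boundary $\ggamma$ and setting $P:=\sum_ip_i>0$,
\[
0\le\SSS_{c_0(L)}(\ggamma)=\SSS_e(\ggamma)+(c_0(L)-e)P=(c_0(L)-e)P,
\]
so $c_0(L)\ge e$. Together with the first paragraph this yields $c_0(L)=c(L)=e=\tfrac12\|\theta\|_\infty^2$, and therefore $r_0(g,\diff\theta)=\sqrt{2c(L)}=\|\theta\|_\infty$, which is the equality claimed in the lemma.

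Finally, since $e_0(L)=0<\tfrac12\|\theta\|_\infty^2=c_0(L)$, the Lagrangian $L$ meets the hypothesis of Theorem~\ref{t:just_above}, which provides a value $\cw(L)>c_0(L)=c(L)$. Consequently $r_{\mathrm w}(g,\diff\theta)=\sqrt{2\cw(L)}>\sqrt{2c(L)}=r_0(g,\diff\theta)=\|\theta\|_\infty$, completing the proof. I do not expect any real difficulty here; the only points deserving attention are the sign bookkeeping in the identity $\SSS_e(\gamma_i)=0$—which is why each $\gamma_i$ must be oriented as a $Z$-orbit rather than a $(-Z)$-orbit—and the correct use of Lemma~\ref{l:actioncrit} (the non-negativity of $\SSS_{c_0(L)}$ on homological boundaries) in deducing $c_0(L)\ge e$.
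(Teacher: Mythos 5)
Your proof is correct and follows essentially the same route as the paper: show that the given boundary in $N$, run as orbits of the magnetic vector field, has zero $\SSS_e$-action at $e=\tfrac12\|\theta\|_\infty^2$, invoke Lemma~\ref{l:actioncrit} to force $c_0(L)\geq\tfrac12\|\theta\|_\infty^2$, combine with $c(L)\leq\tfrac12\|\theta\|_\infty^2$ from the minmax formula of \cite[Theorem~A]{Contreras:1998lr}, and conclude $r_0<r_{\mathrm{w}}$ via Theorem~\ref{t:just_above}. The only difference is immaterial: the paper's proof passes to the reversed multicurve $\overline{\ggamma}$ with $\dot\zeta_i=-Z(\zeta_i)$, whereas with the stated convention $g(Z,\cdot)=-\theta$ your orientation $\dot\gamma_i=Z(\gamma_i)$ is precisely the one making $L(\gamma_i,\dot\gamma_i)+\tfrac12\|\theta\|_\infty^2$ vanish on $N$ (the paper's displayed identity carries a sign slip relative to its own definition of $Z$), and since both $\ggamma$ and $\overline{\ggamma}$ are homological boundaries the argument is unaffected.
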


\begin{proof}
We introduce the associated Tonelli Lagrangian $L:\Tan M\to\R$, $L(q,v)=\tfrac12g_q(v,v)+\theta_q(v)$.
All we have to show is that $c_0(L)\geq\tfrac 12 \|\theta\|_\infty^2.$
To this purpose we notice that
\begin{align*}
L(q,v) + \tfrac12\|\theta\|_{\infty}^2
=
\tfrac12 \big( |v+Z(q)|^2 + \|\theta\|_{\infty}^2 - |\theta_q|^2\big).
\end{align*}
In particular
\begin{align}\label{e:Lagrangian_on_N}
L(q,v) + \tfrac12\|\theta\|_{\infty}^2
=
\tfrac12 |v+Z(q)|^2,
\qquad\forall q\in N,\ v\in\Tan_q S^2.
\end{align}
Let $\ggamma=(\gamma_1,...,\gamma_m)\subset N$ be a topological boundary whose components are periodic orbits of the flow of $Z$, and consider the multicurve 
$\zzeta:=\overline\ggamma$ given by reversing the orientation of any component of $\ggamma$. Since every component $\zeta_i$ of $\zzeta$ satisfies
 $\dot\zeta_i(t)=-Z(\zeta_i(t))$, by~\eqref{e:Lagrangian_on_N} we have
\begin{align*}
\SSS_{\|\theta\|_{\infty}^2/2}(\zzeta )
= \sum_{i=1}^m \SSS_{\|\theta\|_{\infty}^2/2}(\zeta_i) = \sum_{i=1}^m
\int_0^{p_i} \tfrac12 |\dot\zeta_i(t)+Z(\zeta_i(t))|^2\diff t=0.
\end{align*}
This implies the desired inequality.
\end{proof}


\appendix

\section{Waists and modifications of the Lagrangian}\label{a:app}

Let $M$ be a closed manifold, and $L:\Tan M\to\R$ a Tonelli Lagrangian with associated energy $E:\Tan M\to\R$, free-period action functional $\SSS_e:\W(\R/\Z,M)\times(0,\infty)\to\R$, and Euler-Lagrange flow $\phi_L^t:\Tan M\to\Tan M$. 
Any Tonelli waist $\gamma$ with energy $e$ is a periodic orbits of the Lagrangian system of $L$ with energy $e$. If we modify the Lagrangian away from $E^{-1}(e)$, $\gamma$ remains a periodic orbit of the new Lagrangian system, but a priori it may not be a minimizer of the new free-period action functional. However, we have the following statement.

\begin{lem}
\label{l:remark_local_minimizers}
Let $L':\Tan M\to\R$ be a Tonelli Lagrangian that coincide with $L$ on the sublevel set $E^{-1}(-\infty,e']$, for some $e'>e>e_0(L)$, and let $\SSS_e'$ be the free-period action functional of $L'$. A local minimizer of $\SSS_e'$ is a local minimizer of $\SSS_e$ as well.
\end{lem}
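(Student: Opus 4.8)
The plan is to deduce $\SSS_e$-minimality of $\gamma$ from the assumed $\SSS_e'$-minimality by cutting out the arcs of a nearby competitor on which the energy exceeds $e'$ and pasting in low-energy action minimizers, the point being that $e<e'$. First I would note that if $\gamma=(\Gamma,p)$ is a local minimizer of $\SSS_e'$, then $\gamma$ is a periodic orbit of the Euler--Lagrange flow of $L'$ with energy $e$; since $L=L'$ on $E^{-1}(-\infty,e']$ and $e<e'$, the Euler--Lagrange flows of $L$ and $L'$ agree on a neighbourhood of $E^{-1}(e)$ in $\Tan M$, so $\gamma$ is also a periodic orbit of the Lagrangian system of $L$ with energy $e$, and $\SSS_e(\gamma)=\SSS_e'(\gamma)$ because $(\gamma,\dot\gamma)$ has energy $e<e'$. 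Fix a neighbourhood $\UU'$ of $\gamma$ in $\W(\R/\Z,M)\times(0,\infty)$ with $\SSS_e'\ge\SSS_e'(\gamma)$ on $\UU'$. It then suffices to produce a neighbourhood $\UU$ of $\gamma$ such that every $\zeta\in\UU$ with $\SSS_e(\zeta)<\infty$ admits a competitor $\bar\zeta\in\UU'$ with $E(\bar\zeta,\dot{\bar\zeta})\le e'$ a.e.\ and $\SSS_e(\bar\zeta)\le\SSS_e(\zeta)$: indeed then $\SSS_e(\zeta)\ge\SSS_e(\bar\zeta)=\SSS_e'(\bar\zeta)\ge\SSS_e'(\gamma)=\SSS_e(\gamma)$, the middle equality holding because $L=L'$ along $\bar\zeta$.

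To build $\bar\zeta$ from $\zeta=(Z,q)$, let $A\subset\R/q\Z$ be the open set where $E(\zeta,\dot\zeta)>e'$. Using that $E$ is Lipschitz on fibrewise bounded sets and fibrewise proper (so the velocities with $E(q,\cdot)\le e$ are uniformly bounded in $q$), one checks that there is a constant $\delta=\delta(L,e,e')>0$ such that every $v$ with $E(q,v)>e'$ lies at distance $\ge\delta$ from every velocity realizing energy $\le e$ at $q$; since $\gamma$ has energy $e$ and $\zeta$ is $C^0$- and $W^{1,2}$-close to $\gamma$, this forces $|\dot\zeta-\dot\Gamma|\ge\delta/2$ on $A$ after renormalizing to period $1$, whence both $|A|$ and $\length(\zeta|_A)$ become arbitrarily small by shrinking $\UU$. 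I would then replace $\zeta$, on each connected component $[t_0,t_1]$ of $A$, by the unique (energy-$e$) local free-time action minimizer joining $\zeta(t_0)$ to $\zeta(t_1)$, which exists because $d(\zeta(t_0),\zeta(t_1))\le\length(\zeta|_A)$ is below the Tonelli injectivity radius $\rho_\inj(e)$ of Section~\ref{s:below_Mane}, leaving $\zeta$ unchanged elsewhere; call the result $\bar\zeta$. Since $\length(\zeta|_{[t_0,t_1]})<\rho_\inj(e)$, the arc $\zeta|_{[t_0,t_1]}$ is an admissible competitor for that minimizer, so the replacement does not increase the $\SSS_e$-action on it, and hence $\SSS_e(\bar\zeta)\le\SSS_e(\zeta)$; moreover $E(\bar\zeta,\dot{\bar\zeta})\le e'$ everywhere, being equal to $e$ on the inserted arcs and $\le e'$ on the rest.

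\textbf{The main obstacle} is to verify that $\bar\zeta\in\UU'$, i.e.\ that this cut-and-paste is a genuine small perturbation in $\W(\R/\Z,M)\times(0,\infty)$. The inputs are: the removed arcs $\zeta|_A$ have total length and total time tending to $0$ as $\zeta\to\gamma$, and the inserted minimizers between points at distance $d\le\rho_\inj(e)$ have length and time of order $d$, with $\sum_k d(\zeta(t_{0,k}),\zeta(t_{1,k}))\le\length(\zeta|_A)\to0$, so the inserted arcs too have total length and time tending to $0$. Consequently $\bar\zeta$ has period close to $p$ and, renormalized to period $1$, is $C^0$-close to $\Gamma$; and its derivative is $L^2$-close to $\dot\Gamma$ because $\bar\zeta$ agrees with $\zeta$ off a set of small measure and carries the bounded energy $e$ — hence speed comparable to that of $\gamma$ — on the inserted arcs. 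Combining this with the standard continuity of the $W^{1,2}$-structure under reparametrizations with small period change yields $\bar\zeta\in\UU'$ and closes the argument. It is harmless to assume $\SSS_e(\zeta)<\infty$ from the start, and by the equivalence recalled in Section~\ref{s:just_above} it does not matter whether one argues in the $W^{1,2}$ or in the absolutely continuous setting.
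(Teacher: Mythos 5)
Your strategy is genuinely different from the paper's (a direct surgery on nearby competitors rather than an argument by contradiction), and it is viable in principle, but as written it has a real gap at the very first step of the surgery. For a competitor $\zeta=(Z,q)\in\W(\R/\Z,M)\times(0,\infty)$ the derivative $\dot\zeta$ is only an $L^2$ function, so the set $A=\{t:\,E(\zeta(t),\dot\zeta(t))>e'\}$ is merely measurable: it is \emph{not} open, need not contain any interval even when it has positive measure, and has no useful decomposition into components $[t_0,t_1]$. The prescription ``replace $\zeta$ on each connected component of $A$'' therefore does not make sense for general $W^{1,2}$ (or absolutely continuous) competitors -- and this is precisely the difficulty the paper's Remark isolates, since for $C^1$ competitors the statement is easy. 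The natural repair (pass to an open superset $U\supset A$ with $|U\setminus A|$ and $\int_{U\setminus A}|\dot\zeta|\,\diff t$ small, by outer regularity and absolute continuity of the integral, and do the surgery on the at most countably many components of $U$; replacing low-energy arcs in $U\setminus A$ is harmless since the free-time minimizer beats \emph{any} arc in the ball of radius $\rho_\inj$) is absent from your proposal, and it exposes a second unproved ingredient: with countably many components you need the inserted minimizers' times and lengths to be \emph{summably} controlled, i.e.\ a bound $\tau_k+\length_k\leq C\,d(\zeta(t_{0,k}),\zeta(t_{1,k}))$, which you assert (``of order $d$'') but do not justify. It is true -- since $e>e_0(L)$, speeds on $E^{-1}(e)$ are bounded away from zero and $d(q,\pi\circ\phi_L^t(q,v))=|v|t+O(t^2)$ uniformly for small $t$ -- but without it the verification $\bar\zeta\in\UU'$, which you yourself identify as the main obstacle and only sketch (together with the reparametrization estimate, which works because $\Gamma$ is smooth), does not close.

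For comparison, the paper argues by contradiction and sidesteps all of this: given $\zeta_n\to\gamma$ with $\SSS_e(\zeta_n)<\SSS_e(\gamma)$, it replaces the \emph{whole} competitor, not just its high-energy part, by the broken $L$-orbit through the sample points $\zeta_n(ip_n/k)$ with the same transit times $p_n/k$, using Fathi's fixed-time strict minimizers for initial data of energy below $e'$. The broken orbits then converge in $C^\infty$ on each subinterval to $\gamma$, which simultaneously gives the energy bound (hence $\SSS_e=\SSS_e'$ along them) and the $\W$-closeness needed to contradict local $\SSS_e'$-minimality, with no measure-theoretic cut-and-paste and no quantitative control of inserted arcs. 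Your route can be completed along the lines indicated above, but it requires noticeably more delicate estimates than the paper's argument.
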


\begin{rem}
The lemma becomes trivial if one considers the free-period action functionals defined on the space of $C^1$ periodic curves. However, in order to apply global methods from nonlinear analysis, it is more suitable to work with the Sobolev loop space $W^{1,2}(\R/\Z,M)$.
\hfill\qed
\end{rem}

\begin{proof}[Proof of Lemma~\ref{l:remark_local_minimizers}]
Let $\pi:\Tan M\to M$ denote the base projection. As we already recalled in the proof of Lemma~\ref{l:simple_orbits}, there exists $\tau=\tau(e')>0$ small enough such that, for all $q\in M$ and $t\in(0,\tau)$, the map $\pi\circ\phi_L^t|_{\Tan_q M\cap E^{-1}(-\infty,e')}$ is a diffeomorphism onto an open neighborhood of $q$, and for all $\sigma\in(0,\tau)$ and $v\in\Tan_qM$ with $E(q,v)<e'$, the curve 
\begin{align*}
 \gamma:[0,\sigma]\to M,\qquad\gamma(t)=\pi\circ\phi_L^t(q,v)
\end{align*}
is a strict action minimizer: if $\zeta:[0,\sigma]\to M$ is any other absolutely continuous curve such that $\zeta(0)=\gamma(0)$ and $\zeta(\sigma)=\gamma(\sigma)$, then 
\begin{align*}
 \int_0^\sigma L(\gamma(t),\dot\gamma(t))\,\diff t
 <
 \int_0^\sigma L(\zeta(t),\dot\zeta(t))\,\diff t.
\end{align*}

Assume by contradiction that there exists a local minimizer $\gamma=(\Gamma,p)$ of $\SSS_e'$ that is not a local minimizer of $\SSS_e$. Therefore, there exists a sequence $\zeta_n=(Z_n,p_n)\in \W(\R/\Z,M)\times(0,\infty)$ such that $\SSS_e(\zeta_n)<\SSS_e(\gamma)$ and $\zeta_n\to\gamma$ in $\W(\R/\Z,M)\times(0,\infty)$.  Fix $k\in\N$ large enough so that $p/k<\tau$. For all $n$ large enough, we have that $p_n/k<\tau$ as well and, for each $t\in\R/p_n\Z$, the map 
\begin{align*}
\psi_{n,t}:\Tan_{\zeta_n(t)} M\cap E^{-1}(-\infty,e')\to M,
\qquad
\psi_{n,t}(v)=\pi\circ\phi_L^{p_n/k}(\zeta_n(t),v)
\end{align*}
is a diffeomorphism onto a neighborhood of $\{\zeta_n(t),\zeta_n(t+p_n/k)\}$. We set 
\[
v_{n,i}:=\psi_{n,p_ni/k}^{-1}\big(\zeta_n\big(\tfrac{(i+1)p_n}{k}\big)\big),
\qquad
i=0,...,k-1,
\]
and we define the periodic curve $\gamma_n=(\Gamma_n,p_n)\in \W(\R/\Z,M)\times(0,\infty)$ by
\begin{align*}
\gamma_n(\tfrac {i p_n}k+t)
:=
\pi\circ\phi_L^{t}\big(\zeta_n\big(\tfrac {i p_n}k\big), v_{n,i}\big).
\end{align*}
Notice that $\gamma_n$ is a continuous and piecewise broken periodic orbit of the Lagrangian system of $L$ satisfying $\SSS_k(\gamma_n)<\SSS_k(\zeta_n)$. Moreover, since $\gamma_n(\tfrac {i p_n}k)=\zeta_n(\tfrac {i p_n}k)\to \gamma(\tfrac {i p}k)$ as $n\to\infty$, we have that $\Gamma_n|_{[i/k,(i+1)/k]}\to\Gamma|_{[i/k,(i+1)/k]}$ in the $C^\infty$ topology. In particular, for all $n\in\N$ large enough and $t\in\R/p_n\Z$, we have that $E(\gamma_n(t),\dot\gamma_n(t^+))<e'$, which implies 
\begin{align*}
\SSS_e(\gamma_n)
=
\SSS_e'(\gamma_n)
<
\SSS_e'(\gamma)
=
\SSS_e(\gamma).
\end{align*}
This contradicts the fact that $\gamma$ is a minimizer of $\SSS_e$.
\end{proof}

\bibliography{_biblio}
\bibliographystyle{amsalpha}

\end{document}